\documentclass[12pt]{amsart}
\usepackage{floatflt}
\usepackage{latexsym}
\usepackage{amssymb}
\usepackage[cp850]{inputenc}
\usepackage{epsfig}
\usepackage{psfrag}
\usepackage{amsthm}
\usepackage{amscd}
\usepackage{amsmath}
\usepackage{amsfonts}
\usepackage{graphics}
\usepackage[all]{xy}

\addtolength{\oddsidemargin}{-.02in}
        \addtolength{\evensidemargin}{-.02in}
        \addtolength{\textwidth}{.04in}

        \addtolength{\topmargin}{-.02in}
        \addtolength{\textheight}{.04in}
        
\newtheorem{sat}{Theorem}[section]		\newtheorem{lem}[sat]{Lemma}
\newtheorem{kor}[sat]{Corollary}			\newtheorem{prop}[sat]{Proposition}
\newtheorem{bei}{Example}				
\newtheorem*{claim}{Claim}

\newtheorem*{defi*}{Definition}			\newtheorem*{bei*}{Example}
\newtheorem*{sat*}{Theorem}				\newtheorem*{kor*}{Corollary}
\newtheorem*{rmk*}{Remark}					


\let\ssection=\section
\renewcommand{\section}{\setcounter{equation}{0}\ssection}

\newtheorem*{namedtheorem}{\theoremname}
\newcommand{\theoremname}{testing}
\newenvironment{named}[1]{\renewcommand{\theoremname}{#1}\begin{namedtheorem}}{\end{namedtheorem}}

\theoremstyle{remark}
\newtheorem*{bem}{Remark}

\newcommand{\BC}{\mathbb C}			\newcommand{\BH}{\mathbb H}
\newcommand{\BR}{\mathbb R}			
\newcommand{\BN}{\mathbb N}			
\newcommand{\BS}{\mathbb S}			\newcommand{\BZ}{\mathbb Z}

\newcommand{\calD}{\mathcal D}
\newcommand{\CE}{\mathcal E}		
		\newcommand{\CH}{\mathcal H}
		
		\newcommand{\CL}{\mathcal L}
\newcommand{\CM}{\mathcal M}		\newcommand{\CN}{\mathcal N}
		\newcommand{\CP}{\mathcal P}
		
		\newcommand{\CT}{\mathcal T}

\newcommand{\calcium}{\mathcal C \CH}	

\newcommand{\actson}{\curvearrowright}
\newcommand{\D}{\partial}
\newcommand{\DD}{{\nabla}}

\DeclareMathOperator{\PSL}{PSL}		
\DeclareMathOperator{\Id}{Id}		
\DeclareMathOperator{\Isom}{Isom}	
\DeclareMathOperator{\Hom}{Hom}		

\DeclareMathOperator{\inj}{inj}

\DeclareMathOperator{\Ker}{Ker}
\DeclareMathOperator{\cusp}{cusp}

\DeclareMathOperator{\length}{length}

\newcommand{\Hyp}{{\mathbb H}}

\DeclareMathOperator{\PML}{\CP \CM \CL }	
\newcommand{\comment}[1]{}

\begin{document}

\title[]{Algebraic and geometric convergence of discrete representations into $\PSL_2\BC$}
\author{Ian Biringer \& Juan Souto}
\thanks{The first author has been partially supported by NSF postdoctoral fellowship DMS-0902991. The second author has been partially supported by the NSF grant DMS-0706878 and the Alfred P. Sloan Foundation.}
\begin{abstract}
Anderson and Canary have shown that if the algebraic limit of a sequence of discrete, faithful representations of a finitely generated group into $\PSL_2\BC$ does not contain parabolics, then it is also the sequence's geometric limit. We construct examples that demonstrate the failure of this theorem for certain sequences of unfaithful representations, and offer a suitable replacement. 
\end{abstract}
\maketitle

\section{Introduction}
When $\Gamma $ is a finitely generated group, let $\calD (\Gamma) $ be the set of all representations $\rho: \Gamma \to \PSL_2\BC $ with discrete, torsion free and non-elementary image.  Here, a discrete subgroup of $\PSL_2\BC $ is called elementary if it is virtually abelian.  The set $\calD (\Gamma) $ sits naturally in the variety $\Hom (\Gamma,\PSL_2\BC) $ and inherits the topology given by pointwise convergence; this is called the {\em algebraic topology} on $\calD (\Gamma) $ and a pointwise convergent sequence of representations is usually called {\em algebraically convergent}.  The goal of this note is to investigate the relationship between the algebraic convergence of a sequence $(\rho_i) $ in $\calD (\Gamma) $ and the {\em geometric convergence} of the subgroups $\rho_i (\Gamma) \subset \PSL_2\BC $.  Recall that a sequence of closed subgroups $(G_i)$ of $\PSL_2\BC$ converges to a subgroup $G \subset \PSL_2\BC $ geometrically if it does in the Chabauty topology.

Assume from now on that $(\rho_i)$ is a sequence in $\calD(\Gamma)$ converging algebraically to a representation $\rho\in \calD (\Gamma) $, and that the groups $\rho_i(\Gamma)$ converge geometrically to a subgroup $G$ of $\PSL_2\BC$.  For convenience, we will often say that $\rho_i $ converges geometrically to $G $.  
While it is clear that $\rho(\Gamma)\subset G$, it was Jorgensen \cite{Jorgensen1} who first realized that the geometric limit may be larger than the algebraic limit.  For example, Thurston \cite{Thu86B} constructed an algebraically convergent sequence in $\calD (\pi_1 (\Sigma_g)) $ that converges geometrically to a subgroup of $\PSL_2\BC$ that is not even finitely generated.  Other examples of this phenomenon, each one dramatic in its own way, were constructed by Kerckhoff-Thurston \cite{KT}, Anderson-Canary \cite{AC-book} and Brock \cite{Brock}.

All these examples are related to the appearance of new parabolic elements in the algebraic limit; in fact, the following holds:

\begin{sat}[Anderson-Canary\cite{AC1}]\label{AC}
Let $\Gamma $ be a finitely generated group and assume that $(\rho_i) $ is a sequence of faithful representations in $\calD (\Gamma) $ converging algebraically to some $\rho \in \calD (\Gamma)   $.  If $\rho(\Gamma)$ does not contain parabolic elements, then the groups $\rho_i(\Gamma)$ converge geometrically to $\rho(\Gamma)$.
\end{sat}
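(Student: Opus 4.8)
The plan is to descend to the quotient hyperbolic manifolds, realize the geometric limit as a covering space of the algebraic limit, and then prove that this covering is trivial. First I would pass to a subsequence so that the groups $\rho_i(\Gamma)$ converge geometrically; the Chabauty topology is compact, so a convergent subsequence always exists, and since the algebraic limit $\rho(\Gamma)$ is discrete and non-elementary, a standard argument shows that the geometric limit $G$ is itself discrete and satisfies $\rho(\Gamma)\le G$. It suffices to prove $G=\rho(\Gamma)$ for every such subsequential limit. Writing $N=\BH^3/\rho(\Gamma)$ and $N_G=\BH^3/G$, the inclusion $\rho(\Gamma)\le G$ induces a covering map $p\colon N\to N_G$, and the theorem reduces to the claim that $\deg p=1$, i.e.\ that $p$ is injective.

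Next I would pin down the geometry of $N$. Since $\Gamma$ is finitely generated, so is $\rho(\Gamma)$, and by tameness $N$ is the interior of a compact core $M$. The hypothesis that $\rho(\Gamma)$ has no parabolics means $N$ has no cusps, so by the Margulis lemma its thin part is a union of solid tubes about short geodesics and each end of $N$ is either geometrically finite or simply degenerate. Now the two modes of convergence can be compared. Algebraic convergence $\rho_i\to\rho$ yields, for large $i$, nearly isometric homotopy equivalences $M\to N_i$ whose images exhaust the core of $N$; geometric convergence yields nearly isometric pointed embeddings of large metric balls of $N_G$ into $N_i$ carrying the same identifications. Composing these two families shows that on any fixed compact subset of $N$ the map $p$ is a near-isometric embedding, and in particular injective. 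The difficulty is that this only controls $p$ over compacta: the argument says nothing about the ends of $N$ running off to infinity, where $p$ could a priori be infinite-to-one.

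The heart of the proof is therefore the analysis of $p$ near infinity, and this is the main obstacle. Geometrically finite ends are harmless: under algebraic convergence with no new parabolics they stay geometrically finite, the conformal boundary varies continuously, and $p$ is injective on a flared neighborhood of such an end. The real work is at the simply degenerate ends, and here I would invoke Thurston's Covering Theorem in Canary's form: a tame hyperbolic $3$-manifold covering another hyperbolic $3$-manifold has, at each geometrically infinite end, a neighborhood on which the covering is finite-to-one onto a neighborhood of an end of the base, unless one is in the exceptional case where $N_G$ is closed and fibers over the circle with $N$ the cover dual to the fiber. This exceptional fibered case must be excluded separately, using that $G$ arises as the geometric limit containing the \emph{marked} algebraic limit; I expect this to be the most delicate point. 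In the main case, the absence of parabolics is exactly what makes the Covering Theorem usable: keeping the thin parts as solid tubes rather than cusps means the end neighborhoods of $N$ and of $N_G$ are honestly comparable and no rank-two cusp can open up in $N_G$ to absorb extra sheets, so the covering is finite-to-one, indeed proper, near infinity. Combining injectivity on the core with finite sheeting near infinity, a point with two preimages would eventually lie in a single compact set on which $p$ was already shown injective, a contradiction; hence $p$ is injective, $\deg p=1$, and $G=\rho(\Gamma)$.
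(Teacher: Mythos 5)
There is a genuine gap, and it sits at the center of your argument. The claim that ``on any fixed compact subset of $N$ the map $p$ is a near-isometric embedding, and in particular injective'' is unfounded, and in fact cannot be salvaged as stated: any two points of $N$ lie in a common compact set, so injectivity of $p$ on every compactum is literally equivalent to global injectivity, and if this step worked the theorem would be finished before tameness, the Covering Theorem, or either hypothesis ever entered. What composing the algebraic approximations with the almost isometric embeddings of geometric convergence actually yields is a sequence of maps $f_i=\phi_i\circ p\vert_C\colon C\to N_i$ on a compact core $C$ which are local near-isometries and are $\pi_1$-surjective (since $\rho_i(\Gamma)=\pi_1(N_i)$), but nothing forces them to be injective: two points of $C$ identified by $p$ have the same image under $f_i$ by construction. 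A sanity check confirms the step is false: your compacta argument nowhere uses the no-parabolics hypothesis, yet the faithful sequences of Jorgensen, Kerckhoff--Thurston and Thurston (with parabolics in the limit) have $p$ of degree $>1$, which would contradict your claimed injectivity on a compactum containing two preimages of a point. Proving that $f_i$ --- equivalently $p\vert_C$ --- is an embedding is the whole content of the proof, not its point of departure.

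Relatedly, your argument never uses faithfulness of the $\rho_i$, and the statement is false without it --- that is precisely what this paper's Examples \ref{ex1}--\ref{ex3} show: there is an unfaithful sequence with faithful algebraic limit and no parabolics anywhere in which $\rho(\Gamma)$ has index $2$ in $G$; there a degenerate end of $N$ covers an end of $N_G$ with degree $2$, which your scheme of ``finite-to-one near infinity plus injective on the core'' does not exclude. The paper closes both gaps together. Faithfulness enters through Anderson's root argument (Theorem \ref{anderson}, resting on Lemma \ref{roots} and the Margulis lemma): maximal cyclic subgroups of $\rho(\Gamma)$ remain maximal cyclic in $G$, and this upgrades the finite covering $\pi\vert_S\colon S\to\pi(S)$ of each degenerate-end boundary surface, supplied by the Covering Theorem as in your sketch, to an \emph{embedding} --- otherwise some loop in $\pi(S)$ would fail to lift while a proper power lifted. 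Then, in place of injectivity on compacta, one extends $f_i$ radially over the convex-cocompact ends (where strict convexity makes the extension locally bilipschitz, Corollary \ref{local-bilipschitz}) and glues on the complementary pieces of $M_i$ cut off by the embedded surfaces $f_i(S)$, obtaining a locally bilipschitz --- hence covering --- map onto $M_i$ which is $\pi_1$-surjective and therefore a homeomorphism; this is what proves $f_i$, and so $p\vert_C$, is injective on arbitrarily large cores. (The exceptional fibered case of the Covering Theorem that worries you is absorbed by the separate treatment of the case where $\BH^3/G$ has finite volume, via Mostow rigidity, the Dehn-filling description of the $M_i$, and Marden stability.)
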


In \cite{AC2}, Anderson and Canary extended this result to the case where $\rho $ and $\rho_i$ map the same elements to parabolics for all $i $.  Evans proved in \cite{Evans} that the same conclusion holds under the weaker assumption that if an element of $\Gamma $ is sent to a parabolic by $\rho $ then it is also parabolic in $\rho_i $ for all $i$.  All these results were obtained in the presence of certain technical assumptions rendered unnecessary by work of Brock and Souto \cite{BS}, and a fortiori by the resolution of the tameness conjecture by Agol \cite{Agol} and Calegari-Gabai \cite{Calegari-Gabai}.
\vspace{0.2cm}

Motivated by questions of a different nature, related to the attempt to understand the structure of closed hyperbolic 3-manifolds whose fundamental group can be generated by, say, $10$ elements, we revisited Theorem \ref{AC} convinced that it would remain true after dropping the assumption that the representations $\rho_i$ are faithful. To our surprise, we found the following examples showing that Theorem \ref{AC} fails dramatically in this more general setting.

\begin{bei}\label{ex1}
Let $\Gamma$ be the fundamental group of a closed surface of genus $3$. There is a sequence of representations $ (\rho_i) $ in $\calD (\Gamma) $ converging algebraically to a faithful representation $\rho$ and geometrically to a subgroup $G \subset \PSL_2\BC $, such that 
\begin{itemize}
\item $G$ does not contain any parabolic elements.
\item $\rho(\Gamma)$ has index $2$ in $G$.
\end{itemize} 
\end{bei}

\begin{bei}\label{ex2}
Let $\Gamma$ be the fundamental group of a compression body with exterior boundary of genus $4$ and connected interior boundary of genus $3$. There is a sequence of representations $(\rho_i) $ in $\calD (\Gamma) $ converging algebraically to a faithful representation $\rho$ and geometrically to a group $G$, such that \begin{itemize}
\item $G$ does not contain any parabolic elements.
\item $\rho(\Gamma)$ has infinite index in $G$.
\end{itemize} 
\end{bei}

\begin{bei}\label{ex3}
Let $\Gamma$ be the fundamental group of a compression body with exterior boundary of genus $4$ and connected interior boundary of genus $3$. There is a sequence of representations $(\rho_i) $ in $\calD (\Gamma) $ converging algebraically to a faithful representation $\rho$ and geometrically to a group $G $, such that
\begin{itemize}
\item $\rho(\Gamma)$ does not contain any parabolic elements.
\item $G$ is not finitely generated.
\end{itemize} 
\end{bei}

Apart from discussing the examples above, our main goal is to understand the failure of the Theorem \ref{AC} and see what is still true without the assumption that the representations are faithful. We prove:

\begin{sat}\label{main}
Let $\Gamma$ be a finitely generated group and $(\rho_i)$ a sequence in $\calD(\Gamma)$.  Assume that $ (\rho_i) $ is algebraically convergent and converges geometrically to a subgroup $G$ of $\PSL_2\BC$. If $G$ does not contain parabolic elements, then $G$ is finitely generated.
\end{sat}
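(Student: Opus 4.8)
The plan is to pass to the quotient hyperbolic $3$-manifolds $N_i = \Hyp^3/\rho_i(\Gamma)$, $N_\rho = \Hyp^3/\rho(\Gamma)$ and $N_G = \Hyp^3/G$, and to produce a compact core for $N_G$, which is equivalent to the finite generation of $G$. Two features of the hypotheses will drive the argument. First, since $\rho \in \calD(\Gamma)$ and $\Gamma$ is finitely generated, the group $\rho(\Gamma)$ is finitely generated, so by the resolution of the tameness conjecture \cite{Agol,Calegari-Gabai} the manifold $N_\rho$ is topologically tame; fix a compact core $C \subset N_\rho$. The inclusion $\rho(\Gamma) \subseteq G$ gives a covering $p\colon N_\rho \to N_G$. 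Second, because $G$ contains no parabolic elements, $N_G$ has no cusps: by the Margulis lemma its thin part is a disjoint union of Margulis tubes around short closed geodesics, and consequently $N_G$ has uniformly bounded local geometry away from these solid-torus tubes. This is precisely the structural input that the infinitely generated examples, where $G$ does acquire new parabolics, lack.

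I would then control $N_G$ in two regions. For the noncompact part, I would apply Canary's Covering Theorem to the cover $p\colon N_\rho \to N_G$: since $N_\rho$ is tame with finitely generated fundamental group and, by the no-parabolic hypothesis, its ends are geometrically tame with no accidental parabolics, every geometrically infinite end of $N_\rho$ should map finitely-to-one onto an end of $N_G$, while the geometrically finite ends lie in a neighborhood of the boundary of the convex core. As $N_\rho$ has only finitely many ends, this should show that the ends of $N_G$ that are seen by $N_\rho$ are finite in number and each carries a finitely generated fundamental group.

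For the bounded part, I would use the geometric convergence directly. Fixing basepoints realizing $\rho_i(\Gamma) \to G$, each compact $K \subset N_G$ admits, for large $i$, an almost isometric embedding $\phi_i\colon K \to N_i$; since $\pi_1(N_i) = \rho_i(\Gamma)$ is a quotient of $\Gamma$, it has rank at most $\rank(\Gamma)$, and the bounded geometry coming from the absence of parabolics should let me promote these embeddings to $\pi_1$-injective maps on the thick part, thereby bounding the topological complexity of $K$ independently of $K$. Combining the two analyses, a compact core of $N_G$ can be assembled from $p(C)$ together with the finitely many controlled ends, which yields the finite generation of $G$.

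The main obstacle is the interface between the bounded part and the ends, and in particular the $\pi_1$-control: an almost isometric embedding $\phi_i\colon K \to N_i$ need not be $\pi_1$-injective, since a loop in $K$ could bound a disk in $N_i$ that leaves $\phi_i(K)$, and a priori $N_G$ could carry topology in a bounded region that is invisible to the cover $N_\rho \to N_G$ when $[G:\rho(\Gamma)] = \infty$. Ruling this out is exactly where the hypothesis that $G$ has no parabolics must be used quantitatively, to prevent topology from accumulating in a region of bounded geometry, and making this rigorous, likely via least-area surfaces or a uniform bound on the genus of carrier graphs realizing the rank of $\rho_i(\Gamma)$, is the crux of the proof.
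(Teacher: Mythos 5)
Your outline stalls exactly where you say it does, and the obstacle you flag is not a technical loose end but the entire content of the theorem; worse, the quantitative mechanism you hope for cannot exist. First, the two regions you control do not cover $N_G$: the Covering Theorem constrains only the images of the \emph{degenerate} ends of $N_\rho$, and when $[G:\rho(\Gamma)]=\infty$ the part of $N_G$ lying beyond the convex-cocompact ends of $N_\rho$ is completely invisible to $p(C)$ and to the covered ends --- Example \ref{ex2} shows this happens with no parabolics anywhere, so no hypothesis of the theorem rules it out region by region. Second, your proposed bound on the topology of compact sets $K\subset N_G$ via $\rank(\rho_i(\Gamma))\le\rank(\Gamma)$ plus bounded geometry is refuted by Example \ref{ex3}: there every approximating manifold $N_i^*$ is a convex-cocompact genus $3$ handlebody --- uniformly bounded rank, no cusps, nothing pathological at any finite stage --- and yet the geometric limit is infinitely generated. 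The only difference between that situation and the one in Theorem \ref{main} is that parabolics appear in $G$ \emph{itself}; this is undetectable by any uniform estimate on compact pieces of the $N_i$, so least-area surfaces or carrier graphs in the approximates cannot close the gap. The no-parabolic hypothesis must be used through the limit, not through the sequence.

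The paper's proof replaces your direct assault on the topology of $N_G$ with an algebraic minimization that you may find instructive to compare. By Proposition \ref{competitors}, every finitely generated subgroup $H$ with $\rho(\Gamma)\subset H\subset G$ is itself the algebraic limit of representations $\sigma_i$ with $\sigma_i(H)=\rho_i(\Gamma)$, with the \emph{same} geometric limit $G$; one chooses such an $H$ minimizing $|\chi(\BH^3/H)|$. Proposition \ref{attaching-roots} shows that if some degenerate end of $\BH^3/H$ failed to embed under $\BH^3/H\to\BH^3/G$, then adjoining the (finite-index-overlapping) surface group of the end's image would produce an intermediate cover $M'$ with finitely generated fundamental group and $|\chi(M')|<|\chi(M)|$ --- proved not by geometry but by observing that the induced holomorphic map $\calcium(M')\to\calcium(M)$ of deformation spaces has discrete fibers (Corollary \ref{extension-finite}, i.e.\ finiteness of roots in $\PSL_2\BC$) yet is not open, forcing a strict drop in dimension $3|\chi|$. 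Minimality therefore forces every degenerate end of $\BH^3/H$ to embed, and Theorem \ref{max-cyclic-ugly} --- the embedding-hypothesis version of Theorem \ref{max-cyclic}, whose proof is the radial-extension covering argument --- yields $H=G$, so $G$ is finitely generated. The idea missing from your proposal is precisely this: rather than bounding the topology of $N_G$ directly, enlarge the algebraic limit within $G$ and exhibit a complexity that strictly decreases whenever the enlargement is forced.
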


Before going further, observe that Example \ref{ex3} shows that for the conclusion of Theorem \ref{main} to hold it does not suffice to assume that the algebraic limit has no parabolics. Similarly, Example \ref{ex2} shows that under the assumptions of Theorem \ref{main} the algebraic limit $\rho(\Gamma)$ can have infinite index in the geometric limit $G$.

The reader may find it surprising that we mention Example \ref{ex1} at all; the other two seem to be much more dramatic. However, Example \ref{ex1} is {\em the mother of all examples}. It also shows that the following theorem of Anderson fails if one considers non-faithful representations:

\begin{sat}[Anderson]\label{anderson}
Assume that $\Gamma $ is a finitely generated group, and that $(\rho_i) $ is a sequence of faithful representations in $\calD (\Gamma) $ converging algebraically to some representation $\rho$ and geometrically to a subgroup $G$ of $\PSL_2\BC$. Then maximal cyclic subgroups of $\rho(\Gamma)$ are maximal cyclic in $G$. In particular, if the image $\rho(\Gamma)$ of the algebraic limit has finite index in the geometric limit $G$, then $\rho(\Gamma)=G$.
\end{sat}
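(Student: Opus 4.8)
The plan is to argue by contradiction, using the faithfulness of the $\rho_i$ to pull an accidental root in the geometric limit back into the group $\Gamma$ itself, and using a uniform discreteness estimate to upgrade an ``almost relation'' in $\rho_i(\Gamma)$ to an honest one. First I would record the standard structural facts: by the theorem of Chuckrow and J\o rgensen the algebraic limit of discrete, faithful, non-elementary representations is again discrete, faithful and non-elementary, so $\rho \in \calD(\Gamma)$; and the geometric limit $G \supseteq \rho(\Gamma)$ is discrete and torsion free. In a torsion-free discrete subgroup of $\PSL_2\BC$ every non-trivial element is loxodromic or parabolic, hence of infinite order, and ``maximal cyclic'' just means ``generated by a primitive element.'' The conclusion to be reached is therefore: no primitive $c = \rho(\gamma) \in \rho(\Gamma)$ can acquire a proper root in $G$. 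So I would suppose $\langle c\rangle$ is maximal cyclic in $\rho(\Gamma)$ but that $c = h^{n}$ for some $h \in G$ and some $n \geq 2$, and seek a contradiction.

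Next I would exploit the geometric convergence $\rho_i(\Gamma) \to G$ to choose $h_i \in \rho_i(\Gamma)$ with $h_i \to h$, and write $h_i = \rho_i(\delta_i)$ for some $\delta_i \in \Gamma$; here faithfulness of $\rho_i$ makes $\delta_i$ unique, though all I really use is that such a $\delta_i$ exists. Since $\rho_i \to \rho$ algebraically we also have $\rho_i(\gamma) \to c = h^{n}$, so setting $w_i := \delta_i^{\,n}\gamma^{-1} \in \Gamma$ gives $\rho_i(w_i) = h_i^{\,n}\,\rho_i(\gamma)^{-1} \to h^{n}c^{-1} = e$. Thus I have produced a sequence of elements of $\Gamma$ whose images under $\rho_i$ tend to the identity.

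The crux, and the step I expect to be the main obstacle, is to show that such a sequence must eventually be trivial, i.e.\ that there is a neighbourhood $U$ of $e$ in $\PSL_2\BC$ with $\rho_i(\Gamma)\cap U = \{e\}$ for all large $i$. This is a uniform discreteness statement near the identity, and it is exactly where non-elementariness enters: if instead some $g_i \in \rho_i(\Gamma)\setminus\{e\}$ satisfied $g_i \to e$, then picking non-commuting $a,b \in \rho(\Gamma)$ with approximants $a_i,b_i \in \rho_i(\Gamma)$, J\o rgensen's inequality applied to the discrete groups $\langle g_i, a_i\rangle$ and $\langle g_i, b_i\rangle$ forces $|\tr^2(g_i)-4| + |\tr[g_i,a_i]-2| \geq 1$ unless the group is elementary; as $g_i \to e$ the left side tends to $0$, so both $\langle g_i,a_i\rangle$ and $\langle g_i,b_i\rangle$ are eventually elementary, which in the limit pins the fixed-point sets of $a$ and $b$ together and contradicts non-elementariness of $\langle a,b\rangle$. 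Granting the estimate, $\rho_i(w_i)\in U$ eventually forces $\rho_i(w_i)=e$, whence faithfulness gives $\delta_i^{\,n}=\gamma$ in $\Gamma$. Applying $\rho$ yields $\rho(\delta_i)^{n} = \rho(\gamma) = c$ with $\rho(\delta_i)\in\rho(\Gamma)$ and $n\geq 2$; since $c$ has infinite order this means $\langle\rho(\delta_i)\rangle\supsetneq\langle c\rangle$, contradicting the maximality of $\langle c\rangle$ in $\rho(\Gamma)$. (It is precisely this last implication that collapses when the $\rho_i$ are merely non-faithful: one then recovers only $\delta_i^{\,n}\gamma^{-1}\in\Ker\rho_i$, and $\rho(\delta_i)^n=c$ may fail, which is the mechanism behind Example \ref{ex1}.)

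Finally I would deduce the displayed consequence. Suppose $\rho(\Gamma)$ has finite index in $G$ and let $h\in G$ be arbitrary; finite index gives a minimal $k\geq 1$ with $h^{k}\in\rho(\Gamma)$, and $h^{k}$ lies in some maximal cyclic subgroup $\langle c_0\rangle$ of $\rho(\Gamma)$, which by the theorem just proved is maximal cyclic in $G$. Since $h$ commutes with $h^{k}=c_0^{\,j}$, it lies in the centralizer of $c_0$ in $G$; when $c_0$ is loxodromic this centralizer is the infinite cyclic group $\langle c_0\rangle$, and when $c_0$ is parabolic it is a rank one or rank two free abelian group in which $c_0$, being maximal cyclic in $G$, is primitive, so again $h^{k}\in\langle c_0\rangle$ with $c_0$ primitive forces $h\in\langle c_0\rangle$. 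In either case $h\in\langle c_0\rangle\subseteq\rho(\Gamma)$, and as $h$ was arbitrary we conclude $G=\rho(\Gamma)$.
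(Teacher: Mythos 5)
Your proof is correct, and its skeleton is the paper's: assume a maximal cyclic subgroup $\langle c\rangle$ of $\rho(\Gamma)$ acquires a proper root $h\in G$, approximate $h$ by elements $\rho_i(\delta_i)$ via geometric convergence, use a uniform discreteness estimate near the identity to promote $\rho_i(\delta_i^{\,n}\gamma^{-1})\to e$ to the exact relation $\rho_i(\delta_i^{\,n})=\rho_i(\gamma)$ for large $i$, and invoke faithfulness to conclude $\delta_i^{\,n}=\gamma$ in $\Gamma$. Where the paper simply cites the Margulis lemma for the promotion step, you prove it with J\o rgensen's inequality; this is exactly how the paper itself proves the analogous Lemma \ref{kernelslemma} in its appendix, so that substitution is harmless (your phrase about ``pinning fixed-point sets together'' is most cleanly finished by noting that elementary, discrete and torsion free implies abelian, so $[a_i,b_i]=e$ and hence $[a,b]=e$ in the limit). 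The genuine divergence is the endgame. From its relation $\gamma_i^k=\eta$ the paper invokes Lemma \ref{roots} --- finiteness of the set of $k$-th roots in $\PSL_2\BC$, transported into $\Gamma$ via some faithful $\rho_i$ --- to stabilize $\gamma_i$ along a subsequence and conclude $g=\lim\rho_i(\gamma_i)\in\rho(\Gamma)$, contradicting $g\notin\rho(\Gamma)$. You instead apply the limit $\rho$ to the single relation $\delta_i^{\,n}=\gamma$ (one large $i$ suffices) and exhibit $\rho(\delta_i)\in\rho(\Gamma)$ as a proper root of $c$, contradicting maximality of $\langle c\rangle$ inside $\rho(\Gamma)$ itself; since $c$ has infinite order ($\rho(\Gamma)$ is torsion free by Proposition \ref{Jorgensen}), the containment $\langle c\rangle\subsetneq\langle\rho(\delta_i)\rangle$ is indeed proper. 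This is a mild but real simplification --- no root-finiteness lemma, no subsequence extraction --- and it still breaks for unfaithful $\rho_i$ at exactly the point you identify, namely one only recovers $\delta_i^{\,n}\gamma^{-1}\in\Ker(\rho_i)$, which matches the mechanism behind Example \ref{ex1}. Finally, you supply the deduction of the finite-index consequence via centralizers, which the paper asserts without proof; your case analysis is sound, with two small elisions worth making explicit: commuting with $h^k=c_0^{\,j}$ a priori gives only that $h$ preserves the fixed-point set of $c_0$ (torsion-freeness rules out swapping the endpoints of a loxodromic axis, so $h$ lands in the honest centralizer), and primitivity of $c_0$ in a rank-two parabolic centralizer is itself a consequence of the maximality of $\langle c_0\rangle$ in $G$ just established.
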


We mention Theorem \ref{anderson} because its failure is the heart of the failure of Theorem \ref{AC} for sequences which are not necessarily faithful:

\begin{sat}\label{max-cyclic}
Let $\Gamma$ be a finitely generated group and $(\rho_i)$ a sequence in $\calD (\Gamma) $.  Assume that $ (\rho_i) $ converges algebraically to a representation $\rho$ and geometrically to a subgroup $G$ of $\PSL_2\BC$. If 
\begin{itemize}
\item $\rho(\Gamma)$ does not contain parabolic elements, and 
\item maximal cyclic subgroups of $\rho(\Gamma)$ are maximal cyclic in $G$,
\end{itemize}
then $G=\rho(\Gamma)$.
\end{sat}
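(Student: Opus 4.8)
The plan is to write $\Gamma' = \rho(\Gamma)$ and study the covering $p\colon N_\rho \to N_G$ of $N_G = \Hyp^3/G$ by $N_\rho = \Hyp^3/\Gamma'$ induced by the inclusion $\Gamma' \subseteq G$; note that $\Gamma'$ is torsion free since $\rho\in\calD(\Gamma)$, and that $G$, being a geometric limit of the torsion free groups $\rho_i(\Gamma)$, is itself discrete and torsion free, so both quotients are honest manifolds and $\deg p = [G:\Gamma']$. The goal is to show $[G:\Gamma']=1$, and I would split the argument into a soft group-theoretic endgame and a geometric heart. The endgame is exactly the mechanism behind Theorem~\ref{anderson}, run in reverse, and I would isolate it first.

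First I would prove the following reduction: \emph{if $[G:\Gamma']<\infty$, then $G=\Gamma'$}. Indeed, since $\Gamma'$ has no parabolics and finite index in $G$, any parabolic $g\in G$ would have a nontrivial power $g^m\in\Gamma'$, again parabolic, which is impossible; hence $G$ has no parabolics either and every nontrivial element of $G$ is loxodromic. Now fix $g\in G$. As the coset space $G/\Gamma'$ is finite, the cosets $g^k\Gamma'$ cannot all be distinct, so $g^m\in\Gamma'$ for some $m\geq 1$. Let $C$ be the maximal cyclic subgroup of $\Gamma'$ containing $g^m$; by the second hypothesis $C$ is maximal cyclic in $G$. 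On the other hand, in a discrete torsion free subgroup of $\PSL_2\BC$ the maximal cyclic subgroup containing a given loxodromic element is unique --- it is the stabilizer of its axis --- and it contains every element of $G$ fixing the two endpoints of that axis, in particular $g$ itself. Since this unique maximal cyclic subgroup of $G$ equals $C\subseteq\Gamma'$, we conclude $g\in\Gamma'$; as $g$ was arbitrary, $G=\Gamma'$. This step is clean and uses the first hypothesis only to propagate absence of parabolics upward.

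It remains to establish $[G:\Gamma']<\infty$, and this is where the hypotheses genuinely enter and where I expect the main difficulty. Since $\Gamma'$ is finitely generated and parabolic free, the tameness theorem of Agol and Calegari--Gabai makes $N_\rho$ topologically tame, with finitely many ends, each geometrically finite or simply degenerate and none accompanied by a cusp. To bound the number of sheets of $p$ I would feed these ends into Thurston's and Canary's Covering Theorem: applied to the tame cover $N_\rho$ of $N_G$, each simply degenerate end of $N_\rho$ either forces $N_G$ to have finite volume and be finitely covered by $N_\rho$, or admits a neighborhood covering a neighborhood of an end of $N_G$ finitely-to-one; the geometrically finite ends are then controlled through the covering of conformal boundaries coming from $\Lambda(\Gamma')\subseteq\Lambda(G)$. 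The second hypothesis is precisely what excludes the degenerations of Examples~\ref{ex1}--\ref{ex3}: the maximal cyclic condition forbids a closed geodesic of $N_G$ from being covered by a strictly longer closed geodesic of $N_\rho$, which is the wrapping responsible for Example~\ref{ex1}, and, combined with the geometric convergence $\rho_i(\Gamma)\to G$, it should prevent short geodesics of $N_G$ from escaping the image of a compact core of $N_\rho$ or from pinching off into the new cusps that produce the infinite index and non-finite-generation of Examples~\ref{ex2} and~\ref{ex3}. Concretely I would argue that under the maximal cyclic hypothesis $p$ restricts to a length preserving homeomorphism on every closed geodesic of $N_\rho$, then use geometric convergence to match the Margulis tubes of $N_\rho$, $N_i$ and $N_G$ and bound the number of sheets of $p$ over the thick part. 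The hard part is promoting this local, tube-level control of the covering into the global conclusion $[G:\Gamma']<\infty$; once that is in hand, the endgame above finishes the proof, and in particular shows a posteriori that $G$ is parabolic free and finitely generated, consistent with Theorem~\ref{main}.
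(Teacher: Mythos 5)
Your first step is correct: granting $[G:\rho(\Gamma)]<\infty$, the absence of parabolics propagates upward, every $g\in G$ has a loxodromic power $g^m\in\rho(\Gamma)$, and since $g$ commutes with $g^m$ it lies in the stabilizer of the axis of $g^m$, which is the unique maximal cyclic subgroup of $G$ containing $g^m$ and which by hypothesis equals the maximal cyclic subgroup of $\rho(\Gamma)$ containing $g^m$. But this is just the ``in particular'' clause of Theorem~\ref{anderson}, and the entire content of the theorem is concentrated in the step you leave open: the proof that $[G:\rho(\Gamma)]<\infty$. You acknowledge the gap yourself (``the hard part is promoting this local, tube-level control \dots into the global conclusion''), and the tools you propose for it cannot close it. The Covering Theorem~\ref{thecoveringtheorem} controls only degenerate ends and says nothing at all when $M_A=\BH^3/\rho(\Gamma)$ is convex cocompact; and at the level of the inclusion $\rho(\Gamma)\subset G$ alone, your two standing hypotheses are simply too weak to force finite index: a root-closed (isolated) finitely generated subgroup of infinite index in a Schottky group is parabolic-free and has all its maximal cyclic subgroups maximal cyclic in the ambient group, yet the cover has infinite degree. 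So any correct argument must use the dynamical input --- the sequence $(\rho_i)$ --- in an essential way, and your sketch invokes geometric convergence only to match Margulis tubes, which controls compact pieces of $M_G$ inside the approximates $M_i$ but counts nothing about the fibers of $M_A\to M_G$.

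The paper's proof never passes through a finite-index dichotomy, and the ingredient your proposal is missing is the $\pi_1$-\emph{surjectivity} of the transferred maps, which comes from the unfaithfulness-insensitive identity $\rho_i(\Gamma)=\pi_1(M_i)$. Concretely: Proposition~\ref{exhaustion} provides arbitrarily large compact cores $C\subset M_A$ whose boundary components facing convex cocompact ends are strictly convex and whose components $S$ facing degenerate ends map under $\pi:M_A\to M_G$ by finite coverings onto embedded surfaces. Composing $\pi|_C$ with the almost isometric embeddings $\phi_i:\pi(C)\to M_i$ of geometric convergence yields maps $f_i:C\to M_i$ that are $\pi_1$-surjective because generators of $\rho(\Gamma)$ are carried to generators of $\rho_i(\Gamma)$. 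The maximal cyclic hypothesis is used exactly once, in the form you correctly intuited: it upgrades the finite covering $\pi|_S$ to an embedding, since otherwise some loop in $\pi(S)$ would fail to lift to $M_A$ while a proper power of it lifts. One then extends $f_i$ radially over the convex cocompact ends (Lemma~\ref{Lemmabilipschitz} and Corollary~\ref{local-bilipschitz} make the extension locally bilipschitz) and, over each degenerate-end surface $S$, glues in the closed complementary region $P_i^S\subset M_i$ cut off by the embedded separating surface $f_i(S)$; the result is a locally bilipschitz, $\pi_1$-surjective covering onto $M_i$, hence a homeomorphism. This forces $\pi|_C$ to be injective for every such core, and exhausting $M_A$ gives $G=\rho(\Gamma)$ outright. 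In short: your endgame is sound but peripheral, and the missing finite-index step is not a technical lacuna but the theorem itself; it is proved in the paper by a mechanism (transfer to $M_i$ plus $\pi_1$-surjectivity) absent from your outline.
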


The proof of Theorem \ref{max-cyclic}, as the proof of all results in this note, is completely independent of Theorem \ref{AC}. In fact, combining Theorem \ref{max-cyclic} with Anderson's Theorem \ref{anderson} we obtain a new simpler proof of Theorem \ref{AC}, which we will use in constructing Examples 1-3. It should be observed that while Anderson-Canary and Evans used fairly involved arguments to bypass the question of tameness, we use here the resolution of the tameness conjecture by Agol \cite{Agol} and Calegari-Gabai \cite{Calegari-Gabai} in a crucial way.
\vspace{0.2cm}

To conclude this fairly long introduction, we describe the paper section by section. After some preliminaries in section \ref{sec:preli} we construct the examples mentioned above in section \ref{sec:examples}. In section \ref{sec:max-cyclic} we prove Theorem \ref{max-cyclic}, obtaining a new proof of Theorem \ref{AC}. In section \ref{sec:attaching-roots} we obtain a technical result, Proposition \ref{attaching-roots}, which asserts that under some conditions if $M\to N$ is a cover then there is a tower $M\to M'\to N$ where $|\chi(M')|<|\chi(M)|$. Proposition \ref{attaching-roots} and a slightly stronger version of Theorem \ref{max-cyclic} are the key ingredients in the proof of Theorem \ref{main} in section \ref{sec:main}: the idea is to show that if $H\subset G$ is finitely generated, contains $\rho(\Gamma)$ and is such that $|\chi(\BH^3/H)|$ is minimal then $G=H$ and hence $G$ is finitely generated. In section \ref{sec:cusps} we discuss extensions of Theorem \ref{main} and Theorem \ref{max-cyclic} to the case that the algebraic limit has cusps; for instance, this permits us to recover Evans' general version of Theorem \ref{AC}. Once this is done, we discuss briefly in appendix A to which extent other well-known theorems about faithful representations remain true if the condition of faithfulness is dropped. \ref{sec:examples}.
\vspace{0.2cm}

\noindent \bf Acknowledgments: \rm  The authors would like to thank Misha Kapovich for the idea of using dimensions of deformation spaces to prove Proposition \ref{attaching-roots}.  We also  owe a great debt to the referee, who has greatly improved the readability of the paper.

\section{Preliminaries}\label{sec:preli}
In this section we recall some well-known facts and definitions about hyperbolic 3-manifolds. 

\subsection{Hyperbolic manifolds}
By a hyperbolic 3-manifold we will mean any Riemannian 3-manifold isometric to $\BH^3/\Gamma$, where $\Gamma$ is a discrete, torsion-free group of isometries of hyperbolic 3-space.  We will usually assume that the elements of $\Gamma $ are orientation preserving, or equivalently that $\BH^ 3/\Gamma $ is orientable.  The full group of orientation preserving isometries of hyperbolic $3 $-space is written $\Isom_+(\BH^3) $, and is often identified with $\PSL_2 \BC $ through its action on the boundary of $\Hyp^3 $.  

Conjugate subgroups of $\PSL_2\BC $ give isometric quotients of $\Hyp^3 $; in order to remove this indeterminacy we consider {\em pointed} hyperbolic 3-manifolds, i.e. pairs $(M,\omega)$ where $\omega$ is an orthonormal frame of some tangent space of $M$. Choosing once and for ever a fixed frame $\omega_{\BH^3}$ of some tangent space of $\BH^3$, every quotient manifold $\BH^3/\Gamma$ has an induced framing $\omega_{\BH^3/\Gamma}$ given by the projection of $\omega_{\BH^3}$. Now, if $(M,\omega)$ is a pointed hyperbolic 3-manifold then there is a unique $\Gamma\subset\PSL_2\BC$ such that the manifolds $(M,\omega)$ and $(\BH^3/\Gamma,\omega_{\BH^3/\Gamma})$ are isometric as pointed manifolds. 

\begin{bem}
It would be more natural to speak of {\em framed} hyperbolic 3-manifolds instead of pointed; however, it is customary to use the given terminology.
\end{bem}

\subsection{Tameness}
We will be mainly interested in hyperbolic 3-manifolds $M$ with finitely generated fundamental group. Any such manifold is {\em tame} by the work of Agol \cite{Agol} and Calegari-Gabai \cite{Calegari-Gabai}:

\begin{named}{Tameness Theorem}[Agol, Calegari-Gabai]
Let $M$ be a hyperbolic 3-manifold with finitely generated fundamental group. Then $M$ is homeomorphic to the interior of a compact 3-manifold.
\end{named}

A {\em compact core} of a hyperbolic 3-manifold $M$ with finitely generated fundamental group is a compact submanifold $C\subset M$ with $M\setminus C$ homeomorphic to $\D C\times\BR$. Observe that $M$ is homeomorphic to the interior of every such compact core. It follows from the Tameness Theorem that every such $M$ admits an exhaustion by nested compact cores. If $C\subset M$ is a standard compact core, then the ends of $M$ correspond naturally to components of $M\setminus C$. The component $U_\CE$ of $M\setminus C$ corresponding to an end $\CE$ is said to be a {\em standard neighborhood} of $\CE$ and the component of $\D C$ contained in the closure of $U_\CE$ is said to {\em face} $\CE$. We will often denote the component of $\D C$ facing $\CE$ by $\D\CE$. Observe that $U_\CE$ is homeomorphic to $\D\CE\times\BR$.

\subsection{Geometry of ends in the absence of cusps}
\label{ends}
Let $M$ be a hyperbolic 3-manifold with finitely generated fundamental group and without cusps.  An end $\CE$ of $M$ is {\em convex cocompact} if it has a neighborhood in $M$ disjoint from the convex-core $CC(M)$ of $M$. Recall that the convex core $CC(M)$ is the smallest convex submanifold of $M$ whose inclusion is a homotopy equivalence. A manifold with compact $CC(M)$ is said to be {\em convex cocompact}; equivalently, all ends of $M$ are convex cocompact.

For every $d>0$, the set of points in $M$ within distance $d$ of $CC(M)$ is homeomorphic to $M$ and has \it strictly convex \rm $C^1$-boundary: that is, $\langle\DD_X\nu,X\rangle>0$, where $\DD$ is the Levi-Civita connection, $X $ is a vector tangent to $\partial K_i $ and $\nu$ is the outer normal field along $\D K_i$.  

Smoothing the boundary, we obtain the following well-known fact:

\begin{lem}\label{exhaustion-convex}
Let $M$ be a hyperbolic 3-manifold with finitely generated fundamental group. There is an exhaustion of $M$ by a nested sequence of submanifolds $ K_i $ such that:
\begin{enumerate}
\item The boundary $\D K_i$ is smooth and strictly convex.  
\item Every convex cocompact end of $M$ has a neighborhood disjoint of $K_i$.
\item The inclusion of $K_i$ into $M$ is a homotopy equivalence.\qed
\end{enumerate} 
\end{lem}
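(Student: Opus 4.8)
The plan is to build the $K_i$ as smoothings of the closed $r$-neighborhoods of the convex core. For $r>0$ set $N_r=\{x\in M:\mathrm{dist}(x,CC(M))\le r\}$. As recalled just above the statement, each $N_r$ is homeomorphic to $M$ and has strictly convex $C^1$-boundary. First I would fix an increasing sequence $r_i\to\infty$. Since every point of $M$ lies at finite distance from $CC(M)$, the nested family $N_{r_1}\subset N_{r_2}\subset\cdots$ exhausts $M$, and this will survive the smoothing below.

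Properties (2) and (3) come essentially for free. For (3), the nearest-point projection retracts $N_r$ onto $CC(M)$, and $CC(M)\hookrightarrow M$ is a homotopy equivalence by the very definition of the convex core; hence $CC(M)\hookrightarrow N_r\hookrightarrow M$ are all homotopy equivalences, and a small boundary isotopy does not affect this. For (2), a convex cocompact end $\CE$ has by definition a neighborhood disjoint from $CC(M)$; the equidistant surfaces from the component of $\D CC(M)$ facing $\CE$ foliate such a neighborhood and run off to infinite distance from $CC(M)$, so the part of this neighborhood lying beyond distance $r_i$ is a neighborhood of $\CE$ disjoint from $N_{r_i}$, and therefore from any $K_i$ squeezed between two consecutive $N_{r_j}$.

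The real content, and the main obstacle, is (1): replacing the merely $C^1$ strictly convex boundary $\D N_{r_i}$ by a \emph{smooth} strictly convex one, without disturbing the nesting or the exhaustion. Strict convexity, i.e. positivity of the second fundamental form $\langle\DD_X\nu,X\rangle$, is an open and $C^2$-stable condition, so the only issue is that $f=\mathrm{dist}(\cdot,CC(M))$ fails to be smooth. I would deal with this by smoothing the convex exhaustion function rather than each hypersurface separately: since $f$ is proper and convex and its positive level sets are strictly convex, a standard mollification adapted to the negative curvature of $M$ produces a proper smooth function $g$ that is $C^{0}$-close to $f$ and whose regular level sets are again strictly convex. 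Choosing regular values $c_1<c_2<\cdots\to\infty$ of $g$ so that $N_{r_i}\subset\{g\le c_i\}\subset N_{r_{i+1}}$ and setting $K_i=\{g\le c_i\}$, the boundary $\D K_i$ is smooth and strictly convex, the $K_i$ are nested and exhaust $M$, and the interleaving with the $N_{r_j}$ transports properties (2) and (3) verbatim. The delicate point I expect to spend most care on is verifying that the mollification can be arranged to keep the second fundamental form of the regular level sets positive everywhere, which is exactly where the "well-known" smoothing lemma for convex hypersurfaces in nonpositively curved manifolds is invoked.
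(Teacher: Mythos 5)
Your proposal is correct and follows exactly the route the paper has in mind: the paper states the lemma without proof, as a ``well-known fact'' obtained by taking the metric $d$-neighborhoods of $CC(M)$ (which it has just observed are homeomorphic to $M$ with strictly convex $C^1$-boundary) and smoothing. Your filling-in of the smoothing step via a Greene--Wu-type mollification of the convex exhaustion function $\mathrm{dist}(\cdot,CC(M))$, with regular sublevel sets interleaved between the $N_{r_i}$, is a standard and valid way to make that sketch precise, and your verifications of (2) and (3) match the intended argument.
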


Continuing with the same notation as in Lemma \ref{exhaustion-convex}, convexity implies that there is a well-defined map $\kappa_{K_i}:M\to K_i$ that takes a point in $M$ to the point in $K_i $ closest to it.  Strict convexity implies that the preimage of a point $x\in\D K_i$ under this projection is a geodesic ray.  It follows that the map
$$M\setminus K_i\to\D K_i\times(0,\infty),\ \ x\mapsto(\kappa_{K_i}(x),d(x,\kappa_{K_i}(x))$$
is a diffeomorphism; in fact, its inverse is the radial coordinate map
$$\partial K_i\times (0,\infty) \to M \setminus K_i, \ \ (x, t) \mapsto \exp_x (t \nu (x)), $$ where $\nu $ is the outer unit normal vector-field along $\partial K_i $.

An end $\CE$ which is not convex cocompact is said to be {\em degenerate}. It follows from the Tameness Theorem and earlier work of Bonahon \cite{Bonahon} and Canary \cite{Canary-ends} that degenerate ends have very well-behaved geometry. For instance, every degenerate end $\CE$ has a neighborhood which is completely contained in the convex core $CC(M)$. From our point of view, the most important fact about degenerate ends is the Thurston-Canary \cite{Canary-covering} Covering Theorem, of which we state the following weaker version:

\begin{sat}[Thurston, Canary \cite {Canary-covering}]\label {thecoveringtheorem}
Let $M$ and $N$ be non-compact hyperbolic 3-manifolds, assume that $M$ has finitely generated fundamental group and no cusps, let $\pi:M\to N$ be a Riemannian cover and let $\CE$ be a degenerate end of $M$. Then $\CE$ has a standard neighborhood $U_\CE$ such that the restriction
$$\pi\vert_{U_{\CE}}:U_{\CE}\to\pi(U_{\CE})$$
of the covering $\pi$ to $U_{\CE}$ is a covering map onto a standard neighborhood of a degenerate end $\CE'$ of $N$. More precisely, there is a finite covering $\sigma:\D\CE\to\D\CE'$ and homeomorphisms
$$\phi:\D\CE\times\BR\to U_\CE,\ \ \psi:\D\CE'\times\BR\to\pi(U_\CE)$$
with $(\psi^{-1}\circ\pi\vert_{U_\CE}\circ\phi)(x,t)=(\sigma(x),t)$. In particular, the covering $\pi\vert_{U_\CE}$ has finite degree.
\end{sat}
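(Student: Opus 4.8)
The plan is to follow Canary's strategy and reduce everything to the interaction between the product geometry of a degenerate end and the way that geometry is seen downstairs in $N$. By the Tameness Theorem, together with the work of Bonahon and Canary quoted above, the degenerate end $\CE$ has a standard neighborhood $U_\CE\cong\D\CE\times\BR$ contained in the convex core $CC(M)$, and, since $\CE$ is geometrically infinite, there is a sequence of closed geodesics in $M$ that leave every compact set while staying homotopic into $\D\CE$. First I would realize these curves by an exiting sequence of simplicial hyperbolic (or pleated) surfaces $f_n\colon\D\CE\to M$, each homotopic to the inclusion $\D\CE\hookrightarrow M$, whose images are eventually disjoint from any fixed compact set and march monotonically out the $\BR$-direction of $U_\CE$.

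Next I would push the picture down to $N$. As $\pi$ is a local isometry, the compositions $g_n=\pi\circ f_n\colon\D\CE\to N$ are simplicial hyperbolic surfaces of the same topological type, each homotopic into $\pi(\D\CE)$, and by Gauss--Bonnet their intrinsic areas are all bounded by $2\pi|\chi(\D\CE)|$. The whole theorem now turns on a dichotomy: either the surfaces $g_n$ eventually leave every compact subset of $N$, or infinitely many of them meet a fixed compact set $C\subset N$.

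The heart of the matter, and the step I expect to be hardest, is controlling the recurrent alternative. If infinitely many $g_n$ met $C$, then the uniformly bounded areas together with the fact that the $g_n$ are mutually homotopic would, after passing to a geometric limit, produce points lying arbitrarily deep along the $\BR$-factor of $U_\CE$ --- hence arbitrarily far apart in $M$ --- whose images in $N$ are arbitrarily close. A uniform injectivity estimate for pleated surfaces in the spirit of Thurston then forces near-isometries between deep, disjoint slabs $\D\CE\times[t,t']$ of $U_\CE$, i.e. a kind of periodicity of the end; this is precisely the mechanism behind the exceptional case of the full Covering Theorem, in which $N$ has finite volume and virtually fibers over the circle. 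Under the present hypotheses that case is unavailable: such an $N$ is either closed, contradicting non-compactness, or cusped, which would force $M$ to have cusps as well; either way recurrence is excluded.

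Thus the $g_n$ exit $N$, and, being connected and mutually homotopic, they must do so along a single end $\CE'$, which is then geometrically infinite and hence degenerate. To finish I would choose $U_\CE$ deep enough that $\pi(U_\CE)$ lies in a standard neighborhood $U_{\CE'}\cong\D\CE'\times\BR$ of $\CE'$, and select the two product structures compatibly so that $\pi$ preserves the $\BR$-coordinate. Then $\pi\vert_{U_\CE}$ is a proper local isometry onto $\pi(U_\CE)$, hence a covering map, restricting on each level surface to a fixed covering $\sigma\colon\D\CE\to\D\CE'$ of closed surfaces; this yields the homeomorphisms $\phi,\psi$ with $(\psi^{-1}\circ\pi\vert_{U_\CE}\circ\phi)(x,t)=(\sigma(x),t)$ demanded by the statement. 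The degree is automatically finite, being the index $[\pi_1\D\CE':\sigma_*\pi_1\D\CE]$ of a covering of finite-genus surfaces.
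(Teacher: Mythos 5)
The paper does not prove this statement: it is quoted verbatim as a known result of Thurston and Canary, with a citation to \cite{Canary-covering}, so the only meaningful comparison is with Canary's published argument. Measured against that, your sketch reproduces the correct skeleton: exiting simplicial hyperbolic surfaces homotopic to $\D\CE$ (Bonahon--Canary), the Gauss--Bonnet area bound $2\pi|\chi(\D\CE)|$, the dichotomy between the pushed-down surfaces $g_n$ exiting $N$ or recurring to a compact set, and the identification of the recurrent alternative with the exceptional case of the full Covering Theorem ($N$ of finite volume, virtually fibering), which the hypotheses here exclude. Your exclusion argument is right in substance but loosely stated: ``cusped $N$ forces $M$ to have cusps'' is false for general covers; what is true is that in the exceptional case $M$ is finitely covered by the fiber cover of a non-compact finite-volume fibered manifold, whose fiber is a punctured surface, so $\pi_1(M)$ inherits parabolics --- that is the correct way to close that door. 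Likewise, the recurrent case in Canary's proof runs on the bounded diameter (modulo thin parts) of simplicial hyperbolic surfaces together with the mutual homotopies, not on Thurston's uniform injectivity theorem for pleated surfaces, which addresses a different problem.

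The genuine gap is in your final step, where you ``choose $U_\CE$ deep enough that $\pi(U_\CE)$ lies in a standard neighborhood $U_{\CE'}\cong\D\CE'\times\BR$ of $\CE'$.'' This presupposes that the end $\CE'$ of $N$ is tame, which is a \emph{conclusion} of the theorem, not an available input: nothing in the hypotheses bounds $\pi_1(N)$, and indeed in the very applications this paper makes of the theorem (e.g.\ the proof of Theorem \ref{main}, and Example \ref{ex3}) the group $G=\pi_1(N)$ may be infinitely generated, so the Tameness Theorem cannot be invoked downstairs and ends of $N$ have no a priori product structure. Canary's argument runs in the opposite direction: one first proves, via the recurrence dichotomy, that $\pi\vert_{U_\CE}$ is proper (hence finite-to-one) onto its image --- here your sketch is essentially adequate, granted the filling property of the interpolated family of surfaces --- and only then deduces the product structure of $\pi(U_\CE)$ from the fact that it is finitely covered by $\D\CE\times\BR$, which yields $\pi(U_\CE)\cong\D\CE'\times\BR$ and the level-preserving form $(\psi^{-1}\circ\pi\vert_{U_\CE}\circ\phi)(x,t)=(\sigma(x),t)$. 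As written, your coordinate-matching step is circular; reordered as above, the sketch becomes a fair outline of Canary's proof.
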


Combining the Tameness theorem, Lemma \ref{exhaustion-convex} and the Covering theorem we obtain:

\begin{prop}\label{exhaustion}
Let $M$ and $N$ be hyperbolic 3-manifolds with infinite volume, assume that $M$ has finitely generated fundamental group and no cusps, and let $\pi:M\to N$ be a Riemannian cover. Then $M$ admits an exhaustion by nested standard compact cores $C_i\subset C_{i+1}$ such that the following holds:
\begin{enumerate}
\item If a component $S$ of $\D C_i$ faces a convex cocompact end of $M$ then $S$ is smooth and strictly convex.
\item If a component $S$ of $\D C_i$ faces a degenerate end of $M$ then the restriction $\pi\vert_S:S\to\pi(S)$ is a finite covering onto an embedded surface in $N$.\qed
\end{enumerate}
\end{prop}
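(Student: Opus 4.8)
The plan is to construct the cores $C_i$ one end at a time, using Lemma~\ref{exhaustion-convex} to shape the boundary facing convex cocompact ends and the Covering Theorem to shape the boundary facing degenerate ends. First I would invoke the Tameness Theorem: since $M$ has finitely generated fundamental group it is homeomorphic to the interior of a compact manifold, so it has only finitely many ends $\CE_1,\dots,\CE_k$, each convex cocompact or degenerate. As $M$ has infinite volume and no cusps it is non-compact, so the Covering Theorem applies to $\pi$. Fix a standard compact core $C_0$ whose complementary components are standard end neighborhoods $U_{\CE_1},\dots,U_{\CE_k}$.

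For a degenerate end $\CE_j$ I would apply the Covering Theorem to $\pi\colon M\to N$. After possibly shrinking $U_{\CE_j}$ it provides a product structure $\phi_j\colon\D\CE_j\times\BR\to U_{\CE_j}$, a homeomorphism $\psi_j\colon\D\CE_j'\times\BR\to\pi(U_{\CE_j})$ onto a standard neighborhood of a degenerate end of $N$, and a finite covering $\sigma_j\colon\D\CE_j\to\D\CE_j'$, such that $\pi$ reads $(x,t)\mapsto(\sigma_j(x),t)$ in these coordinates. Hence for each $t$ the level surface $S_j(t)=\phi_j(\D\CE_j\times\{t\})$ is carried by $\pi$, via the finite covering $\sigma_j$, onto $\psi_j(\D\CE_j'\times\{t\})$, which is embedded in $N$ because $\psi_j$ is a homeomorphism; thus every such $S_j(t)$ satisfies conclusion (2), and as $t\to\infty$ these surfaces exhaust $U_{\CE_j}$. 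For a convex cocompact end $\CE_j$, Lemma~\ref{exhaustion-convex} supplies an exhaustion $K_n$ of $M$ with smooth, strictly convex boundary; the component of $\D K_n$ facing $\CE_j$ is then smooth and strictly convex, satisfies conclusion (1), and these components exhaust $U_{\CE_j}$ as $n\to\infty$.

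It remains to assemble these boundary surfaces into nested standard compact cores exhausting $M$. For each $i$ I would pick indices $n_i$ and parameters $t_i$, both tending to $\infty$, and let $C_i$ be the compact submanifold obtained from $C_0$ by enlarging it inside each convex cocompact end out to the chosen component of $\D K_{n_i}$ and inside each degenerate end out to $S_j(t_i)$. Because each enlargement merely pushes $\D C_0$ outward through the product region $U_{\CE_j}$, the inclusion $C_i\hookrightarrow M$ remains a homotopy equivalence and $C_i$ is again a standard compact core; choosing $n_i,t_i$ increasing makes the $C_i$ nested, and since the selected surfaces exhaust each end the $C_i$ exhaust $M$. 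Conclusions (1) and (2) then hold on $\D C_i$ by construction.

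The one step requiring care is this assembly: one must interpolate between the strictly convex surfaces in the convex cocompact ends and the product level surfaces in the degenerate ends so that each boundary component faces a single end and the cores genuinely nest and exhaust $M$. This is exactly where the product structure of end neighborhoods is used, since it lets us slide $\D C_0$ freely within each end; the geometric content, namely strict convexity on one side and the finite-covering product form on the other, is then simply read off on the selected boundary surfaces, with the two cited theorems doing all the real work.
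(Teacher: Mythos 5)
Your proposal is correct and follows exactly the route the paper intends: the paper states Proposition \ref{exhaustion} with no written proof, offering it as the immediate combination of the Tameness Theorem, Lemma \ref{exhaustion-convex} (for the strictly convex boundary components facing convex cocompact ends), and the Covering Theorem (for the finite-covering product structure on degenerate ends), which is precisely your construction. Your explicit attention to the assembly step --- interpolating within the product end neighborhoods so the enlarged cores remain standard, nested, and exhausting --- fills in the detail the paper leaves to the reader, and it is handled correctly.
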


\subsection{Conformal boundaries and Ahlfors-Bers theory}
\label{conformalboundaries}
As in the previous section, assume that we have a hyperbolic $3$-manifold $M=\Hyp^ 3/ \Gamma $ with finitely generated fundamental group and no cusps.  

The {\em limit set} of $\Gamma $, written $\Lambda (\Gamma) $, is the closure of the set of fixed points in $\BS^2_{\infty } = \partial \Hyp^ 3 $ of hyperbolic elements of $\Gamma $.  The complement of $\Lambda (\Gamma) $ is the {\em domain of discontinuity } $\Omega (\Gamma) = \BS^2_{\infty } \setminus \Lambda (\Gamma) $, which is the largest open subset of $\BS^2_{\infty } $ on which $\Gamma $ acts properly discontinuously.  The quotient $\partial_c M = \Omega (\Gamma)/ \Gamma $ is called the \it conformal boundary \rm of $M $.  In fact, $\Gamma $ acts properly discontinuously on $\Hyp^3 \cup \Omega (\Gamma) $, and the quotient $M \cup \partial_c M  $ is a manifold with boundary having interior $ M $.  

The action of $\Gamma $ on $\BS^2_{\infty } $ is by Mobius transformations, so $\partial_c M $ inherits a natural conformal structure.  This structure is closely tied with the geometry of $M$: for instance, the unique hyperbolic metric on $\partial_c M $ compatible with this conformal structure, called the {\em Poincar\'e metric}, is similar to the intrinsic metric on $\partial CC( M) $.  Specifically, the closest point projection $\kappa: M \to CC (M) $ extends continuously to a map $\bar {\kappa } :\partial_c M \to \partial CC( M) $, and we have the following theorem of Canary: 

\begin{prop}[Canary \cite{Canary-conformal}]
For every $\epsilon >0 $ there exists $K > 0 $ so that the following holds.
Let $M$ be a hyperbolic $3$-manifold with finitely generated fundamental group, such that every component of $\partial_c M $ has injectivity radius at least $\epsilon $ in the Poincar\'e metric.  Then the closest point projection $\kappa : \partial_c M\to \partial CC (M) $ is $K $-lipschitz, where $\partial _c M $ has the Poincar\'e metric and $\partial CC (M) $ is considered with the Riemannian metric induced by its inclusion into $M$.  \label{conformalconvex}
\end{prop}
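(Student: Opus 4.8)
The plan is to pass to the universal cover and reduce everything to an estimate for the nearest point retraction in $\BH^3$, since the statement is $\Gamma$-equivariant and local-to-global in nature. Writing $M=\BH^3/\Gamma$, the closest point projection $\kappa$ is the quotient of the $\Gamma$-equivariant nearest point retraction $\tilde\kappa\colon\Omega(\Gamma)\to\partial\ch(\Lambda(\Gamma))$, where $\ch(\Lambda(\Gamma))\subset\BH^3$ is the convex hull of the limit set and $\partial\ch(\Lambda(\Gamma))$ covers $\partial CC(M)$. The Poincar\'e metric on $\partial_c M$ lifts to the ($\Gamma$-invariant) Poincar\'e metric on $\Omega(\Gamma)$, and the Riemannian metric induced on $\partial CC(M)$ by its inclusion into $M$ lifts to the path metric on $\partial\ch(\Lambda(\Gamma))$ induced by its inclusion into $\BH^3$. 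Since $\tilde\kappa$ is equivariant and both metrics are length metrics, a Lipschitz bound for $\tilde\kappa$ descends to the same bound for $\kappa$, so it suffices to bound the Lipschitz constant of $\tilde\kappa$ between these two metrics.

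The engine of the estimate is the theory of the nearest point retraction due to Sullivan and Epstein--Marden. By Thurston's description, $\partial\ch(\Lambda(\Gamma))$ is a surface bent along a measured geodesic lamination: its induced path metric is intrinsically hyperbolic, its flat pieces sit in $\BH^3$ as totally geodesic subsurfaces, and $\Omega(\Gamma)$ is recovered from this hyperbolic structure by grafting along the bending lamination. On the flat pieces, $\tilde\kappa$ is modeled on the nearest point retraction onto a totally geodesic plane---an isometry from the shadow disk with its Poincar\'e metric onto the plane---so there $|d\tilde\kappa|$ is controlled directly. Across the support of the bending lamination the retraction collapses the grafted region onto the bending geodesics, and the universal estimates of Sullivan and Epstein--Marden bound the resulting distortion.

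The place where the hypothesis enters, and the delicate point of the argument, is making this control uniform across the grafted cylinders. A short geodesic in $(\partial_c M,\ \text{Poincar\'e})$ corresponds to a grafting cylinder of small Poincar\'e width, and without a lower bound the comparison between the grafted conformal structure and the intrinsic hyperbolic structure on $\partial CC(M)$ can degenerate. The lower bound $\epsilon$ on the injectivity radius of the conformal boundary bounds below the Poincar\'e width of every such cylinder---equivalently, it keeps Thurston's grafting comparison uniform---and this is exactly what upgrades the local estimates of the previous paragraph to a global bound. Assembling the flat-piece estimate with the uniform cross-cylinder estimate and integrating along an arbitrary rectifiable path then yields a constant $K=K(\epsilon)$ depending only on $\epsilon$. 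The routine convex-hull geometry is standard; the work is entirely in the uniform estimate near the bending locus.
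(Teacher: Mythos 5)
First, note that the paper contains no proof of this statement: Proposition \ref{conformalconvex} is quoted as background from Canary \cite{Canary-conformal}, so the comparison here is with Canary's argument rather than with anything in the text. Your outline moves in the right circle of ideas, but it has a genuine gap at its central step. The decomposition of $\partial\ch(\Lambda(\Gamma))$ into totally geodesic flat pieces separated by grafted cylinders exists only when the bending lamination is a multicurve; for a general bending lamination (e.g.\ one with irrational, fully supported leaves) there are no grafting annuli and possibly no flat pieces at all, and Thurston's grafting description of $\Omega(\Gamma)$ in the form you invoke it is likewise a multicurve statement. So the object on which your whole estimate is organized need not exist, and repairing this by approximating with finite-leaved laminations would require showing the Lipschitz constant survives a geometric limit uniformly --- which is essentially the proposition itself. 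Compounding this, the one place the hypothesis enters --- the claim that $\inj \geq \epsilon$ on $\partial_c M$ bounds the Poincar\'e widths of the grafting cylinders below and ``keeps Thurston's grafting comparison uniform,'' producing $K = K(\epsilon)$ --- is asserted rather than derived, and since this is the entire quantitative content of the statement, the proof is missing exactly where it matters. (A smaller repair: on a flat piece the model isometry is from a round disk $D$ equipped with \emph{its own} Poincar\'e metric; to get an estimate in the Poincar\'e metric of $\Omega(\Gamma)$ you must invoke Schwarz--Pick monotonicity $\lambda_{\Omega} \geq \lambda_{D}$ for the inclusion $\Omega(\Gamma) \subset D$, and the preimage of a flat piece under the retraction is not literally the shadow disk.)

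For contrast, Canary's proof sidesteps the bending decomposition entirely and splits into two clean inequalities. First, by Epstein--Marden the nearest point retraction $\tilde\kappa : \Omega(\Gamma) \to \partial\ch(\Lambda(\Gamma))$ is Lipschitz with a \emph{universal} constant when $\Omega(\Gamma)$ carries the quasihyperbolic metric $|dz|/d(z, \Lambda(\Gamma))$ --- no hypothesis on $\Gamma$ is needed here. Second, a Beardon--Pommerenke type estimate gives $\lambda_{\Omega}(z) \asymp \bigl(d(z,\Lambda(\Gamma))(1+\beta(z))\bigr)^{-1}$ for the Poincar\'e density, where the term $\beta(z)$ blows up precisely in thin annular regions of $\Omega(\Gamma)/\Gamma$; the hypothesis $\inj \geq \epsilon$ on every component of $\partial_c M$ (equivalently, a quantitative uniform perfectness of $\Lambda(\Gamma)$) bounds $\beta$ above, making the Poincar\'e and quasihyperbolic metrics comparable with constant depending only on $\epsilon$. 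Composing the two bounds yields $K(\epsilon)$, with the hypothesis entering through a single checkable inequality rather than through a structure theory of the bending locus. If you want a complete argument, restructuring along these lines is the efficient route.
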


The conformal boundary plays an important role in the deformation theory of hyperbolic $3$-manifolds; in particular, a convex-cocompact hyperbolic $3$-manifold is determined up to isometry by its topology and conformal boundary.  A more precise statement of this is as follows. 

 Let $M$ be the interior of a compact hyperbolizable $3$-manifold $\bar M $ in which each boundary component has negative Euler characteristic.  Define $\calcium (M) $ to be the set of all convex-cocompact hyperbolic metrics on $M $, where two metrics are identified if they differ by an isometry isotopic to the identity map.  It follows from Thurston's hyperbolization theorem \cite{Thu86A} that $\calcium (M) $ is nonempty, and it inherits a natural complex structure through its relation to the representation variety $\Hom (\pi_1(M),\PSL_2\BC) $ (see \cite[Section 4.3]{Japaner}).  Then we have:

\begin{sat}[Ahlfors-Bers Parameterization, see \cite{Japaner}] 
\label{AhlforsBers}

The map $\calcium (M) \to \CT (\partial\bar{ M }) $, induced from the map taking a convex-cocompact uniformization of $M$ to its conformal boundary, is a biholomorphic equivalence.  Therefore, $\calcium (M) $ is a complex manifold of dimension $$ \frac 32 |\chi (\partial\bar { M }) |=3 |\chi (M)| .$$
\end{sat}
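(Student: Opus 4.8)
The plan is to prove the Ahlfors--Bers Parameterization by establishing that the conformal-boundary map is a biholomorphism and then reading off the dimension. I would organize the argument into three parts: surjectivity (existence of uniformizations realizing any prescribed conformal structure), injectivity (rigidity of convex-cocompact structures relative to their conformal boundary), and holomorphicity together with the dimension count.

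For surjectivity, the key input is Thurston's hyperbolization theorem \cite{Thu86A}, which provides at least one convex-cocompact hyperbolic structure on the interior $M$ of $\bar M$; combined with the quasiconformal deformation theory of Kleinian groups, one shows that every point of the Teichm\"uller space $\CT(\partial\bar M)$ is hit. Concretely, given a structure in $\calcium(M)$ with conformal boundary a point of $\CT(\partial\bar M)$, any nearby Teichm\"uller class is realized by a quasiconformal deformation of the holonomy representation: one solves the Beltrami equation using a $\Gamma$-invariant Beltrami differential supported on the domain of discontinuity $\Omega(\Gamma)$, and the resulting quasiconformal conjugacy yields a new convex-cocompact uniformization with the prescribed conformal boundary. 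For injectivity, the essential tool is Mostow--Sullivan rigidity together with the measurable Riemann mapping theorem: if two convex-cocompact uniformizations share the same conformal boundary, the conformal equivalence between their domains of discontinuity extends $\Gamma$-equivariantly, and a Sullivan-type argument (the limit set has measure zero for convex-cocompact groups) forces the conjugating quasiconformal map to be conformal on all of $\BS^2_\infty$, hence M\"obius, so the two structures differ by an isometry isotopic to the identity.

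For holomorphicity and the dimension, I would use the realization of $\calcium(M)$ as an open subset of the character variety, so that both $\calcium(M)$ and $\CT(\partial\bar M)$ carry their natural complex structures, and the boundary map is holomorphic because the Beltrami-equation solutions depend holomorphically on parameters (the Ahlfors--Bers dependence of solutions on the dilatation). Given that the map is a holomorphic bijection between complex manifolds with holomorphic inverse, it is a biholomorphism. The dimension then follows from the classical computation $\dim_{\BC}\CT(S)=\tfrac{3}{2}|\chi(S)|$ for a surface $S$, applied componentwise to $\partial\bar M$, giving $\dim_{\BC}\calcium(M)=\tfrac{3}{2}|\chi(\partial\bar M)|$; the identity $|\chi(\partial\bar M)|=2|\chi(M)|$ comes from the fact that the double of $\bar M$ along its boundary is a closed $3$-manifold with zero Euler characteristic, so $2\chi(\bar M)=\chi(\partial\bar M)$, and $\chi(M)=\chi(\bar M)$.

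The main obstacle I anticipate is the injectivity step: proving genuine rigidity requires more than soft bijectivity arguments, since one must rule out the possibility that distinct hyperbolic structures induce the same conformal boundary. This rests on the measure-zero property of limit sets of convex-cocompact (more generally geometrically finite) groups together with the rigidity of conformal conjugacies, and it is precisely here that the convex-cocompactness hypothesis and the negative-Euler-characteristic assumption on $\partial\bar M$ are used. The remaining pieces---surjectivity and holomorphic dependence---are standard consequences of Ahlfors--Bers quasiconformal deformation theory, while the dimension count is purely topological once the biholomorphism is in hand. Since this is a cited theorem \cite{Japaner}, I would in practice defer the full rigidity proof to the references and emphasize the structural outline above.
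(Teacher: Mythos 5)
The paper offers no proof of this statement: it is quoted directly from the literature (``see \cite{Japaner}''), so the only available comparison is with the standard proof in the references, and that is essentially what you have outlined. Your three-part scheme --- surjectivity via $\Gamma$-invariant Beltrami differentials supported on $\Omega(\Gamma)$, injectivity via Sullivan-type rigidity using the measure-zero limit set, and holomorphicity via the Ahlfors--Bers holomorphic dependence of Beltrami-equation solutions --- is the classical argument, and your dimension count is correct: doubling $\bar M$ along its boundary gives $2\chi(\bar M)=\chi(\partial\bar M)$ since the double is a closed odd-dimensional manifold, and then $\dim_{\BC}\CT(S)=\frac32|\chi(S)|$ applied componentwise yields $\frac32|\chi(\partial\bar M)|=3|\chi(M)|$.

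Two steps deserve tightening. First, the Beltrami-equation mechanism you describe realizes \emph{every} point of $\CT(\partial\bar M)$, not merely nearby ones: any two marked conformal structures on $\partial_c M$ differ by a quasiconformal map, so the invariant Beltrami coefficient can be prescribed globally. Restricting to ``nearby'' classes is not only unnecessary but, taken literally, would only prove openness of the image and leave surjectivity incomplete (one would then need a separate closedness argument). Second, in the injectivity step, a conformal equivalence between the domains of discontinuity does not by itself extend to a quasiconformal map of $\BS^2_\infty$; the correct order of operations is to first invoke the quasiconformal conjugacy of the two holonomy representations (Marden's isomorphism theorem, or quasiconformal stability of convex-cocompact groups), then observe that the composed self-conjugacy is conformal on $\Omega(\Gamma)$, so that its Beltrami coefficient is supported on the limit set, which has measure zero for convex-cocompact groups; hence the map is M\"obius. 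With these repairs your outline is exactly the proof in the cited reference, and deferring the details to \cite{Japaner}, as the paper itself does, is entirely appropriate.
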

Here, $\CT (\partial \bar M) $ is the \it Teichm\"uller space \rm of conformal structures on $\partial \bar M $, where two conformal structures are identified if there is a conformal map between them that is isotopic to the identity.  The conformal boundary of a convex-cocompact uniformization of $M $ is only identified with $\partial \bar M $ up to isotopy, so a point of $\calcium (M) $ gives a point in $\CT (\partial\bar M) $ rather than a specific conformal structure on $\partial \bar M $.

The space $\calcium (M) $ is natural with respect to certain coverings.  If $M' $ is a cover of $M $ with $\pi_1 (M') $ finitely generated, then the Tameness Theorem and Canary's covering theorem imply that convex-cocompact metrics on $M$ lift to convex-cocompact metrics on $M' $.  In fact,
  
\begin{lem}
If $M' $ is a $3$-manifold with finitely generated fundamental group and $\tau:  M' \to M $ is a covering map, there is a holomorphic map $$\tau^* : \calcium (M) \to \calcium (M') $$ induced by the map taking a hyperbolic structure on $M $ to its pullback under $\tau $.
\end{lem}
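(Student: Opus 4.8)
The plan is to produce $\tau^*$ in three stages: first check that pulling back a convex-cocompact metric yields a convex-cocompact metric on $M'$, then check that this operation respects the equivalence relation defining $\calcium$, and finally verify holomorphicity by realizing $\tau^*$ as a restriction map between representation varieties.

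\textbf{Well-definedness at the level of metrics.} Fix a convex-cocompact structure on $M$ and write $M=\Hyp^3/\Gamma$. The cover $\tau$ corresponds to a subgroup $\Gamma'\subset\Gamma$ with $M'=\Hyp^3/\Gamma'$, and the pulled-back metric is exactly the quotient structure on $\Hyp^3/\Gamma'$. I claim this metric is convex cocompact. Since $\Gamma$ is convex cocompact it contains no parabolics, hence neither does $\Gamma'\subset\Gamma$, so $M'$ has no cusps. By hypothesis $\pi_1 M'$ is finitely generated, so $M'$ is tame by the Tameness Theorem. Finally, $M'$ has no degenerate ends: by the Covering Theorem (Theorem \ref{thecoveringtheorem}) applied to $\tau:M'\to M$, any degenerate end of $M'$ would have a standard neighborhood mapping by a finite-degree covering onto a standard neighborhood of a degenerate end of $M$, but a convex-cocompact $M$ has no degenerate ends. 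Thus every end of $M'$ is convex cocompact, and the pulled-back metric lies in $\calcium(M')$.

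\textbf{Compatibility with the equivalence relation.} Suppose two convex-cocompact metrics $g_1,g_2$ define the same point of $\calcium(M)$, so there is an isometry $f:(M,g_1)\to(M,g_2)$ isotopic to $\id_M$. Being homotopic to the identity, $f$ acts trivially on $\pi_1 M$ and in particular preserves the conjugacy class of $\pi_1 M'$, so $f$ lifts to a homeomorphism $\tilde f:M'\to M'$. Any lift of $f$ satisfies $\tau\circ\tilde f=f\circ\tau$, so $\tilde f$ is a local isometry from the pullback of $g_1$ to the pullback of $g_2$, hence a global isometry. Lifting the isotopy from $f$ to $\id_M$ by the covering homotopy property and choosing the lift of $f$ that starts at $\id_{M'}$, we obtain $\tilde f$ isotopic to $\id_{M'}$. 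Therefore the pullbacks of $g_1$ and $g_2$ define the same point of $\calcium(M')$, and $\tau^*:\calcium(M)\to\calcium(M')$ is a well-defined map.

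\textbf{Holomorphicity.} The complex structure on $\calcium(M)$ comes from its embedding into the representation variety: a point of $\calcium(M)$ is recorded by the conjugacy class of the holonomy $\rho:\pi_1 M\to\PSL_2\BC$, identifying $\calcium(M)$ with a complex submanifold of the quotient of $\Hom(\pi_1 M,\PSL_2\BC)$ by conjugation, and likewise for $M'$. Under this identification $\tau^*$ is induced by restriction $\rho\mapsto\rho|_{\pi_1 M'}$, since the holonomy of a pulled-back metric is the restriction of the original holonomy to $\Gamma'=\pi_1 M'$. Evaluating representations on a fixed finite generating set of $\pi_1 M'$ (expressed via generators of $\pi_1 M$) exhibits restriction as a polynomial, hence holomorphic, map $\Hom(\pi_1 M,\PSL_2\BC)\to\Hom(\pi_1 M',\PSL_2\BC)$; as conjugate representations restrict to conjugate representations, it descends to a holomorphic map of the conjugation quotients. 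By the first two steps this descended map carries $\calcium(M)$ into the complex submanifold $\calcium(M')$, and a holomorphic map into the ambient quotient whose image lies in a complex submanifold is holomorphic into that submanifold. Hence $\tau^*$ is holomorphic.

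\textbf{Main obstacle.} The only stage requiring genuine work is the first: ruling out degenerate ends (and cusps) of $M'$. This is precisely where finite generation of $\pi_1 M'$ and the Covering Theorem are indispensable, for without them $M'$ could fail to be convex cocompact, and the pulled-back metric would not even lie in $\calcium(M')$, so $\tau^*$ would not be defined.
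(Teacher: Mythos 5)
Your proposal is correct and follows exactly the route the paper indicates: the paper states this lemma without a detailed proof, justifying it only by the preceding remark that ``the Tameness Theorem and Canary's covering theorem imply that convex-cocompact metrics on $M$ lift to convex-cocompact metrics on $M'$,'' which is precisely your first step (no parabolics in the subgroup, tameness from finite generation of $\pi_1 M'$, and the Covering Theorem ruling out degenerate ends since the base has none). Your remaining two steps --- descent through the isotopy equivalence via lifting, and holomorphicity via the polynomial restriction map on representation varieties, consistent with the paper's definition of the complex structure on $\calcium(M)$ through $\Hom(\pi_1(M),\PSL_2\BC)$ --- are the routine verifications the paper leaves implicit, and they are carried out correctly.
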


\subsection{Geometric convergence}
Recall that a sequence $(G_i)$ of closed subgroups of $\PSL_2\BC$ converges {\em geometrically} to a subgroup $G$ if it does in the Chabauty topology. More concretely, $(G_i)$ converges geometrically to $G$ if $G$ is the subgroup of $\PSL_2\BC$ consisting precisely of those elements $g\in\PSL_2\BC$ such that there are $g_i\in G_i$ with $g_i\to g$ in $\PSL_2\BC $. In other words, $G$ is the accumulation set of the groups $G_i$.

Most of our arguments are based on an interpretation of geometric convergence in terms of the quotient manifolds $\Hyp^ 3 / G_i $. 

\begin{defi*}
A sequence $(M_i,\omega_i)$ of pointed hyperbolic 3-manifolds converges {\em geometrically} to a pointed manifold $(M_\infty,\omega_\infty)$ if for every compact $K\subset M_\infty$ that contains the origin of $\omega_\infty$, there is a sequence $\phi_i:K\to M_i$ of smooth maps with $\phi_i(\omega_\infty)=\omega_i$ converging in the $C^k$-topology to an isometric embedding for all $k\in\BN$. We will refer to the maps $\phi_i$ as the {\em almost isometric embeddings provided by geometric convergence}. 
\end{defi*}

\begin{rmk*}
Note that although the phrase `converging in the $C^k $-topology to an isometric embedding' is suggestive and pleasing to the ear, it has no meaning.  One way to formalize this would be to say that for each point $x \in K $, there is an $\epsilon > 0 $ and a sequence of isometric embeddings $$\beta_i : B ( \phi_i (x),\epsilon) \to \Hyp^3 $$ from $\epsilon $-balls around $\phi_i (x) \in M_i $ so that $\beta_i \circ \phi_i $ converges to an isometric embedding of some neighborhood of $x \in M_\infty $ into $\Hyp^ 3 $. 
\end{rmk*}

Recall that by our convention above, a pointed hyperbolic manifold is a manifold together with a base frame and that choosing a base frame $\omega_{\BH^3}$ of hyperbolic space we obtain a bijection between the sets of discrete torsion free subgroups of $\PSL_2\BC$ and of pointed hyperbolic 3-manifolds. Under this identification, the notions of geometric convergence of groups and manifolds are equivalent (see for instance \cite{Benedetti-Petronio,Kapovich}):

\begin{prop}\label{convergence-convergence-geom}
Let $G_1,G_2,\dots,G_\infty$ be discrete and torsion-free subgroups of $\PSL_2\BC$ and consider for all $i=1,\dots,\infty$ the pointed hyperbolic 3-manifold $(M_i,\omega_i)$ where $M_i=\BH^3/G_i$ and $\omega_i$ is the projection of the frame $\omega_{\BH^3}$ of $\BH^3$. The groups $G_i$ converge geometrically to $G_\infty$ if and only if the pointed manifolds $(M_i,\omega_i)$ converge geometrically to $(M_\infty,\omega_\infty)$.
\end{prop}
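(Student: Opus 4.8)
Fix the base point $p\in\BH^3$ underlying $\omega_{\BH^3}$ and write $\pi_i:\BH^3\to M_i$ for the covering projections, so that $\omega_i=d\pi_i(\omega_{\BH^3})$ is based at $\bar p_i:=\pi_i(p)$. The plan is to prove the two implications separately, after first recording a reformulation of Chabauty convergence that serves as the common bridge. \emph{Bridge Lemma:} $G_i\to G_\infty$ geometrically if and only if for every $R>0$ and $\epsilon>0$, for all large $i$, every $g\in G_\infty$ with $d(p,gp)\le R$ admits some $g_i\in G_i$ with $d_{\PSL_2\BC}(g_i,g)<\epsilon$, and conversely every $g_i\in G_i$ with $d(p,g_ip)\le R$ admits some $g\in G_\infty$ with $d_{\PSL_2\BC}(g_i,g)<\epsilon$. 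I would prove this by a routine compactness argument: the set $\{g\in G_\infty:d(p,gp)\le R\}$ is finite by discreteness and properness, which upgrades pointwise Chabauty approximation to the uniform statement above, while the reverse condition and the closedness of the accumulation set follow since any $g_i\in G_i$ of bounded translation length lie in the compact set $\{h:d(p,hp)\le R\}$ and therefore subconverge to an element of $G_\infty$.

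\textbf{Group convergence $\Rightarrow$ manifold convergence.} Given a compact $K\subset M_\infty$ containing $\bar p_\infty$, I would choose $R$ with $\pi_\infty(B(p,R))\supset K$ and cover a compact lift of $K$ by finitely many balls $B_\alpha=B(x_\alpha,r)\subset B(p,2R)$ small enough to embed under $\pi_\infty$ and under each $\pi_i$. On each $B_\alpha$ the local isometry $\pi_i\circ(\pi_\infty|_{B_\alpha})^{-1}$ is defined, and on an overlap two such charts differ by the isometry $g_i^{\alpha\beta}(g^{\alpha\beta})^{-1}$, where $g^{\alpha\beta}\in G_\infty$ is the deck transformation relating the two sheets and $g_i^{\alpha\beta}\in G_i$ is the element approximating it supplied by the Bridge Lemma; this discrepancy tends to the identity as $i\to\infty$. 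Averaging the charts against a fixed partition of unity subordinate to $\{\pi_\infty(B_\alpha)\}$, using barycenters computed on the small scale $r$ in $M_i$, should yield a smooth $(1+\epsilon_i)$-bi-Lipschitz map $\phi_i:K\to M_i$ with $\epsilon_i\to0$; arranging the chart over the base point to fix $p$ and $\omega_{\BH^3}$ gives $\phi_i(\omega_\infty)=\omega_i$ exactly, and reading the $\phi_i$ in local isometric charts $\beta_i$ exhibits the $C^k$ convergence to an isometric embedding required by the definition.

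\textbf{Manifold convergence $\Rightarrow$ group convergence.} Conversely, for each $R$ I would apply geometric convergence to $K=\pi_\infty(\overline{B(p,R)})$ to obtain almost isometric embeddings $\phi_i:K\to M_i$ with $\phi_i(\omega_\infty)=\omega_i$. The composite $\phi_i\circ\pi_\infty:B(p,R)\to M_i$ is a local almost-isometry out of a simply connected domain, so it lifts through $\pi_i$ to a map $\tilde\phi_i:B(p,R)\to\BH^3$ with $\tilde\phi_i(p)=p$ and $d\tilde\phi_i|_p=\id$; as $i\to\infty$ these lifts converge to an isometry fixing $p$ and $\omega_{\BH^3}$, hence to the identity. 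For $g\in G_\infty$ with $d(p,gp)<R$, the equality $\pi_\infty(gp)=\pi_\infty(p)$ forces $\tilde\phi_i(gp)\in\pi_i^{-1}(\bar p_i)=G_i\cdot p$, so $\tilde\phi_i(gp)=g_ip$ for a unique $g_i\in G_i$, and since $\tilde\phi_i\to\id$ while preserving frames we get $g_i\to g$. This is the first condition of the Bridge Lemma; the second follows symmetrically, since $\tilde\phi_i$ nearly surjects onto $B(p,R-\delta)$, so any $g_i\in G_i$ with $d(p,g_ip)\le R-\delta$ has $g_ip$ in its image and is matched to some $g\in G_\infty$. The Bridge Lemma then gives $G_i\to G_\infty$.

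\textbf{Main obstacle.} I expect the delicate step to be the forward direction: converting the purely group-theoretic approximation of the Bridge Lemma into genuinely smooth maps that converge in every $C^k$ while fixing the frame exactly. The overlap discrepancies $g_i^{\alpha\beta}(g^{\alpha\beta})^{-1}\to\id$ must be absorbed by the partition-of-unity averaging without destroying injectivity or smoothness, and one must check that the averaged maps, read in the charts $\beta_i$, converge to a true \emph{isometric} embedding rather than merely a bi-Lipschitz one. By contrast, the reverse direction is an essentially monodromy-free lifting argument and should be comparatively routine.
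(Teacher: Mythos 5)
The paper does not actually prove Proposition \ref{convergence-convergence-geom}; it is quoted as standard with a pointer to \cite{Benedetti-Petronio,Kapovich}, and your argument is essentially the proof given in those references: a two-sided uniform approximation statement for group elements of bounded displacement (your Bridge Lemma), chart-gluing with a smoothing step for the forward direction, and a lifting/monodromy argument for the converse. Your converse direction is correct as sketched, including the use of the frame condition to pin down the lift $\tilde\phi_i$ and the trivial point stabilizers (torsion-freeness plus discreteness) to make $g_i$ unique.

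The one substantive omission is in the forward direction, where you posit balls $B_\alpha$ of a fixed radius $r$ "small enough to embed under $\pi_\infty$ and under each $\pi_i$," and later average "on the small scale $r$ in $M_i$." Neither is available from the Bridge Lemma alone: it requires a uniform lower bound on the injectivity radius of $M_i$ along the relevant points, i.e.\ the statement that no nontrivial elements $g_i\in G_i$ can converge to the identity. Your Bridge Lemma shows only that such $g_i$ would accumulate on an element of $G_\infty$ of arbitrarily small displacement, and that limit may simply be the identity, which lies in $G_\infty$ --- no contradiction yet. The missing idea is a non-collapse argument: since $G_i$ is discrete and torsion-free, each nontrivial $g_i$ is loxodromic or parabolic, so if $d(p,g_ip)\to 0$ one can choose powers $n_i$ with $d(p,g_i^{n_i}p)\in[\delta,2\delta]$ for $\delta$ smaller than half the minimal displacement at $p$ of nontrivial elements of $G_\infty$ over the compact set in question; these powers subconverge to a nontrivial element of $G_\infty$ of displacement at most $2\delta$, a contradiction. (Alternatively one can run J{\o}rgensen's inequality against approximants of two loxodromics of $G_\infty$ with distinct axes, as the paper itself does in the proof of Lemma \ref{kernelslemma}.) With this lemma in hand, your charts are genuinely isometric embeddings at a uniform scale, the Karcher-mean averaging is well defined, and the rest of your outline --- including arranging the partition of unity to be identically $1$ near the base point so that $\phi_i(\omega_\infty)=\omega_i$ holds exactly --- goes through as in the cited sources.
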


\subsection{Algebraic convergence}

Let $\Gamma$ be a finitely generated group. Recall that a sequence $(\rho_i)$ of representations $\rho_i:\Gamma\to\PSL_2\BC$ converges {\em algebraically} to a representation $\rho$ if for every $\gamma\in\Gamma$ we have $\rho_i(\gamma)\to\rho(\gamma)$ in $\PSL_2\BC$.  Jorgensen proved in \cite{Jorgensen} that if each image $\rho_i (\Gamma) $ is discrete and non-elementary then the same is true of $\rho (\Gamma) $.  His argument also shows that torsion cannot suddenly appear in the limit, so we have the following theorem:

\begin{prop}\label{Jorgensen}
Let $\Gamma$ be a finitely generated group.  Then the subset $\calD (\Gamma) \subset \Hom (\Gamma,\PSL_2\BC) $ consisting of (not necessarily faithful) representations with discrete, torsion free and non-elementary image is closed with respect to the algebraic topology.
\end{prop}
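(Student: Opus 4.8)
The plan is to deduce the proposition from Jorgensen's inequality: if $A,B\in\PSL_2\BC$ generate a discrete non-elementary group, then
\[
|\tr^2 A-4|+|\tr(ABA^{-1}B^{-1})-2|\ge 1,
\]
the traces being computed for any lift to $\SL_2\BC$ (the left-hand side being independent of that choice). Let $(\rho_i)$ be a sequence in $\calD(\Gamma)$ converging algebraically to $\rho$; we must show that $\rho(\Gamma)$ is non-elementary, discrete and torsion-free. Since $\rho$ is fixed, each of these is a property of $\rho$ alone, so I am free to pass to subsequences throughout; and since $\Gamma$ is finitely generated, hence countable, whenever I use finitely many words of $\Gamma$ to witness a feature of the groups $\rho_i(\Gamma)$ I may, after a subsequence and a diagonal argument, assume the same words work for every $i$. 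I also record that each $\rho_i(\Gamma)$, being discrete and torsion-free, contains no nontrivial elliptic element, so every nontrivial $\rho_i(\gamma)$ is loxodromic or parabolic.

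The main obstacle is to prove that the limit does not collapse, that is, that $\rho(\Gamma)$ is non-elementary. Each $\rho_i(\Gamma)$ is non-elementary and so contains two loxodromic elements with four distinct fixed points; by the countability reduction I may fix words $u,v\in\Gamma$ with $\rho_i(u),\rho_i(v)$ of this form for all $i$. If the limiting pair $\rho(u),\rho(v)$ still has four distinct fixed points, it generates a non-elementary group and we are done. The difficulty is to exclude a degeneration in which the fixed points collide or a parabolic forms, making $\langle\rho(u),\rho(v)\rangle$ elementary. Here I would invoke Jorgensen's analysis of sequences of two-generator discrete groups: such a sequence can converge to an elementary group only by developing arbitrarily short nontrivial elements, i.e.\ words $C_i$ in $u,v$ with $\rho_i(C_i)\to\Id$ and $\rho_i(C_i)\ne\Id$; feeding such a $C_i$ into the inequality against a generator whose fixed points stay separated from those of $\rho_i(C_i)$ forces $|\tr^2\rho_i(C_i)-4|+|\tr(\rho_i(C_i)\,\cdot\,\rho_i(C_i)^{-1}\,\cdot\,)-2|\to 0<1$, a contradiction. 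Making the production of short elements precise, and treating the elliptic-, parabolic- and axis-collision cases uniformly, is the technical heart of this step, and it is where Jorgensen's inequality does its real work.

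With non-elementarity established, $\rho(\Gamma)$ contains three loxodromics $\rho(w_1),\rho(w_2),\rho(w_3)$ with six distinct fixed points, and I would prove discreteness and torsion-freeness by the same mechanism, now cleanly. For discreteness, if $\rho(\Gamma)$ were indiscrete there would be $\gamma_n\in\Gamma$ with $\rho(\gamma_n)\to\Id$ and $\rho(\gamma_n)\ne\Id$; as $\rho(\gamma_n)$ has at most two fixed points, some $w_j$ has both of its fixed points disjoint from them, so $\langle\rho(\gamma_n),\rho(w_j)\rangle$ is non-elementary, whence $\langle\rho_i(\gamma_n),\rho_i(w_j)\rangle$ is non-elementary for large $i$ (an open condition). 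This last group lies in the discrete group $\rho_i(\Gamma)$, so Jorgensen's inequality applies to it; letting $i\to\infty$ gives $|\tr^2\rho(\gamma_n)-4|+|\tr(\rho(\gamma_n)\rho(w_j)\rho(\gamma_n)^{-1}\rho(w_j)^{-1})-2|\ge 1$, and letting $n\to\infty$ turns the left-hand side into $0$, a contradiction. For torsion-freeness the argument is identical with $C_i:=\rho_i(\gamma)^m$ in place of $\rho(\gamma_n)$, where $\rho(\gamma)$ is a hypothetical element of finite order $m>1$: then $\rho(\gamma)$ is elliptic with a well-defined pair of fixed points, $C_i\to\Id$ with $C_i\ne\Id$ since $\rho_i(\Gamma)$ is torsion-free, and choosing $w_j$ with fixed points away from those of $\rho(\gamma)$ once more contradicts the inequality as $i\to\infty$. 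Together these three steps show $\rho(\Gamma)$ is non-elementary, discrete and torsion-free, i.e.\ $\rho\in\calD(\Gamma)$, so $\calD(\Gamma)$ is closed.
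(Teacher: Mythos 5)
There is a genuine gap, and you have in fact flagged it yourself: the non-elementarity of the limit is exactly the heart of the proposition, and your proposal does not prove it. The statement ``a sequence of two-generator discrete groups can converge to an elementary group only by developing arbitrarily short nontrivial elements'' is not a lemma you can quote and then verify in a line; it \emph{is} J{\o}rgensen's theorem on algebraic limits, the very result being asserted. (For comparison, the paper offers no independent argument either: it derives the proposition directly from J{\o}rgensen's paper \cite{Jorgensen}, noting only that his argument also rules out torsion in the limit. So a proof that defers the elementary-limit analysis back to ``J{\o}rgensen's analysis'' is a citation dressed as a proof.) The difficulty is structural: your discreteness and torsion-freeness arguments work because you can play a sequence $C_i\to\Id$ against a \emph{fixed} partner $\rho(w_j)$, so both trace terms in J{\o}rgensen's inequality tend to their degenerate values. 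To rule out an elementary limit you have no such fixed partner --- the elements of $\Gamma$ witnessing non-elementarity of $\rho_i(\Gamma)$ may escape to infinity or degenerate as $i\to\infty$, and controlling the commutator trace against a drifting partner is precisely the case analysis (elliptic, parabolic, axis-collision) you acknowledge omitting.

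A second, related flaw is your countability reduction: it is not true that because each $\rho_i(\Gamma)$ contains a witnessing pair of loxodromics, some subsequence admits a \emph{common} pair of words $u,v$ working for all $i$. For each $i$ the witnessing pairs form some (infinite) subset of a countable set, but no pigeonhole produces a pair lying in infinitely many of these subsets --- the $k$-th candidate pair could work only for $i\le k$ --- so even the starting point of your non-elementarity step is unjustified. There is also a small error in the criterion ``disjoint fixed points implies non-elementary'': an elliptic whose fixed points are disjoint from those of a loxodromic $\rho(w_j)$ may still \emph{swap} the two fixed points of $\rho(w_j)$, yielding an elementary group; in your discreteness step this is repairable (since $\rho(\gamma_n)\to\Id$, for large $n$ it moves points too little to swap separated fixed points, and it cannot fix all six fixed points of $\rho(w_1),\rho(w_2),\rho(w_3)$ without being trivial), but as written the claim is false. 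On the positive side, the conditional half of your proposal is sound: granted a non-elementary limit containing loxodromics with separated fixed points, your derivation of discreteness and torsion-freeness via J{\o}rgensen's inequality is the standard and correct mechanism --- indeed it is the same mechanism the paper itself uses in the appendix (Lemma \ref{kernelslemma}) to show kernels stabilize along algebraically convergent sequences.
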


When a sequence $ (\rho_i) $ converges both algebraically to a representation $\rho$ and geometrically to some group $G$, it is easy to see that $\rho(\Gamma) \subset G$. In other words, the manifold $\BH^3/\rho(\Gamma)$ covers the manifold $\BH^3/G$. In particular, given a compact subset $C\subset\BH^3/\rho(\Gamma)$ we can project it down to $\BH^3/G$ and then map the image to $M_i $ under the almost isometric embeddings given by geometric convergence.  This produces maps $C\to M_i=\BH^3/\rho_i(\Gamma)$ which look more and more like the restriction of a covering to $C$. 

More generally, assume that $H$ is a finitely generated subgroup of $G$ containing $\rho(\Gamma)$. By the tameness theorem, the manifold $\BH^3/H$ contains a standard compact core $C_H$. Composing the restriction to $C_H$ of the covering $\BH^3/H\to\BH^3/G$ with the almost isometric embeddings given by geometric convergence, we obtain for sufficiently large $i$ maps $C_H \to M_i $ similar to those described above.  Using the induced homomorphisms $H\to\pi_1(M_i,\omega_i)$ one can then construct a sequence of representations $\sigma_i:H\to\PSL_2\BC$ converging algebraically to the inclusion of $H\hookrightarrow G\hookrightarrow\PSL_2\BC$. The assumption that $\rho(\Gamma)\subset H$ implies then that $\sigma_i(H)=\rho_i(\Gamma)$ for all $i$. In particular, (compare with \cite[Lemma 4.4]{McMullen}):

\begin{prop}\label{competitors}
Let $\Gamma$ be a finitely generated group, $(\rho_i)$ a sequence in $\calD (\Gamma) $ converging algebraically to a representation $\rho_\infty$ and geometrically to a group $G \subset \PSL_2\BC $. If $H\subset G$ is a finitely generated subgroup of $G$ containing $\rho(\Gamma)$ then for large $i $ there are representations $$\sigma_i:H\to\PSL_2\BC, \text { with } \sigma_i(H)=\rho_i(\Gamma)$$ converging to the inclusion $H\hookrightarrow\PSL_2\BC$. In particular, the groups $\sigma_i(H)$ converge geometrically to $G$.\qed
\end{prop}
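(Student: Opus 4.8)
The plan is to realize the competitors $\sigma_i$ geometrically, by pushing a compact core of $\BH^3/H$ forward into the approximating manifolds $M_i=\BH^3/\rho_i(\Gamma)$ via the almost isometric embeddings supplied by geometric convergence; this is essentially the standard covering construction (cf. \cite[Lemma 4.4]{McMullen}). Here I write $\rho$ for the algebraic limit $\rho_\infty$, to match the notation $\rho(\Gamma)$.

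First I would set up the maps. Since $H\subset G$ is finitely generated, $\BH^3/H$ has finitely generated fundamental group, so by the Tameness Theorem it carries a standard compact core $C_H$; I would choose it to contain the basepoint determined by $\omega_{\BH^3}$, so that it carries the marking $\pi_1(C_H)=H$. The covering $p:\BH^3/H\to\BH^3/G=M_\infty$ restricts on $C_H$ to a locally isometric immersion with compact image $K=p(C_H)\ni\omega_\infty$. For large $i$, geometric convergence provides almost isometric embeddings $\phi_i:K\to M_i$ with $\phi_i(\omega_\infty)=\omega_i$ converging in every $C^k$ to an isometric embedding. Setting $f_i:=\phi_i\circ p|_{C_H}:C_H\to M_i$, these are basepoint- and frame-preserving immersions converging smoothly to $p|_{C_H}$. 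Taking induced maps on fundamental groups and using the markings $\pi_1(C_H)=H$ and $\pi_1(M_i,\omega_i)=\rho_i(\Gamma)$, I would define $\sigma_i:=(f_i)_*:H\to\rho_i(\Gamma)\subset\PSL_2\BC$. This already yields the containment $\sigma_i(H)\subset\rho_i(\Gamma)$.

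Next I would verify algebraic convergence $\sigma_i\to(H\hookrightarrow\PSL_2\BC)$. For $h\in H$ pick a loop $\ell$ in $C_H$ representing $h$; then $\sigma_i(h)$ is the holonomy of $f_i\circ\ell$ in $M_i$, while $h$ itself is the holonomy of $p\circ\ell$ in $M_\infty$. Since holonomy depends continuously on the defining map and $f_i\to p|_{C_H}$ in $C^1$, I get $\sigma_i(h)\to h$ for every $h$, which is exactly algebraic convergence to the inclusion.

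The main point, and the step I expect to be the real obstacle, is the reverse containment $\rho_i(\Gamma)\subset\sigma_i(H)$, which is where the hypothesis $\rho(\Gamma)\subset H$ enters and where one must rule out that $\sigma_i(H)$ is a proper subgroup. For a generator $\gamma$ of $\Gamma$ the element $\rho(\gamma)$ lies in $H$, so the previous paragraph gives $\sigma_i(\rho(\gamma))\to\rho(\gamma)$, while algebraic convergence of the original sequence gives $\rho_i(\gamma)\to\rho(\gamma)$. Thus $g_i:=\sigma_i(\rho(\gamma))^{-1}\rho_i(\gamma)$ lies in $\rho_i(\Gamma)$ and tends to the identity. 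To conclude that $g_i=\Id$ for large $i$, and hence $\sigma_i(\rho(\gamma))=\rho_i(\gamma)$, I would invoke a uniform injectivity radius bound: because $G$ is discrete and torsion free, $M_\infty$ has positive injectivity radius at $\omega_\infty$, and geometric convergence forces $\inj_{M_i}(\omega_i)$ to converge to this positive number. Hence for large $i$ no nontrivial element of $\rho_i(\Gamma)$ moves the basepoint an arbitrarily small distance, so $g_i\to\Id$ forces $g_i=\Id$. Running this over the finitely many generators of $\Gamma$ gives $\rho_i(\gamma)=\sigma_i(\rho(\gamma))\in\sigma_i(H)$ for large $i$, whence $\rho_i(\Gamma)\subset\sigma_i(H)$ and therefore $\sigma_i(H)=\rho_i(\Gamma)$. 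Finally, since $\sigma_i(H)$ is literally equal to $\rho_i(\Gamma)$ for large $i$, the asserted geometric convergence $\sigma_i(H)\to G$ is immediate from the hypothesis.
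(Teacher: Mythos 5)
Your proposal is correct and follows essentially the same route as the paper, which constructs $\sigma_i$ exactly this way: push a standard compact core $C_H$ of $\BH^3/H$ through the covering $\BH^3/H\to\BH^3/G$ and the almost isometric embeddings $\phi_i$, take induced homomorphisms to $\pi_1(M_i,\omega_i)\cong\rho_i(\Gamma)$, and use $\rho(\Gamma)\subset H$ to get $\sigma_i(H)=\rho_i(\Gamma)$. The paper leaves the equality $\sigma_i(H)=\rho_i(\Gamma)$ as an assertion, and your discreteness argument (nontrivial elements $g_i\in\rho_i(\Gamma)$ with $g_i\to\Id$ are ruled out, via the convergence of injectivity radii at the base frames under geometric convergence---a standard Margulis/J\o rgensen-type fact) is a legitimate way to fill in that detail.
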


\subsection{Roots}
Recall that the nontrivial elements in $\PSL_2\BC$ are either hyperbolic, parabolic or elliptic depending on their dynamical behaviour. Every hyperbolic element $\gamma\in\PSL_2\BC$ stabilizes a geodesic $A$ in $\BH^3$, and if $\alpha\in\PSL_2\BC$ is a $k$-th root of $\gamma$, i.e. $\gamma=\alpha^k$, then $\alpha A=A$.  It follows easily that the set of $k$-th roots of $\gamma$ is finite.  A similar argument applies in the parabolic and elliptic case, so we obtain the following well-known, and in this paper surprisingly important, fact:

\begin{lem}\label{roots}
For every $k\in\BZ$ and nontrivial element $\gamma\in\PSL_2\BC$, the set $\{\alpha\in\PSL_2\BC \ \vert \ \alpha^k=\gamma\}$ is finite.\qed
\end{lem}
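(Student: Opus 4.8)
The plan is to exploit the fact that any $k$-th root $\alpha$ of $\gamma$ automatically commutes with $\gamma$: since $\gamma=\alpha^k$ we have $\alpha\gamma=\alpha^{k+1}=\gamma\alpha$, so $\alpha$ lies in the centralizer $Z(\gamma)\subset\PSL_2\BC$. First I would record what commuting forces on the boundary sphere $\BS^2_{\infty}$. If $\gamma$ is hyperbolic or elliptic it preserves a unique geodesic axis $A$ with two endpoints $p,q\in\BS^2_{\infty}$, and anything commuting with $\gamma$ must permute the set $\{p,q\}$ of its fixed points and hence preserve $A$; if $\gamma$ is parabolic it has a single fixed point $p\in\BS^2_{\infty}$, which every element of $Z(\gamma)$ must fix. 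In either case $\alpha$ is confined to the small stabilizer of this boundary data. Note also that the case $k=0$ is vacuous since $\gamma\neq1$, and that replacing $(k,\gamma)$ by $(-k,\gamma^{-1})$ changes neither the root set nor the hypotheses, so I may assume $k>0$.

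The second step is to normalize $\gamma$ and solve $\alpha^k=\gamma$ explicitly inside $Z(\gamma)$. Conjugating so that $A$ has endpoints $0$ and $\infty$, the orientation preserving isometries fixing both endpoints form the one-parameter subgroup $T=\{z\mapsto\lambda z:\lambda\in\BC^*\}\cong\BC^*$, and both hyperbolic and elliptic $\gamma$ lie in $T$, say $\gamma\colon z\mapsto\lambda_0 z$ with $\lambda_0\neq1$. Any $\alpha\in T$ with $\alpha^k=\gamma$ then satisfies $\lambda^k=\lambda_0$, which has at most $k$ solutions in $\BC^*$. For $\gamma$ parabolic, conjugating its fixed point to $\infty$ gives $\gamma\colon z\mapsto z+b$ with $b\neq0$, and any $\alpha$ fixing $\infty$ has the form $z\mapsto az+c$; computing $\alpha^k$ shows that $a^k=1$ is forced while the resulting translation part must equal the nonzero $b$, which pins down $\alpha\colon z\mapsto z+b/k$ uniquely. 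In every case the relevant power map has finite fibers, being essentially the $k$-th power map on the multiplicative group $\BC^*$ or multiplication by $k$ on the additive group $\BC$.

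What remains — and where I expect the only genuine care to be needed — is to account for elements of $Z(\gamma)$ that swap the two endpoints $p,q$ rather than fixing each. Such an $\alpha$ is an involution, since it acts on $A$ as a half-turn, so $\alpha^k\in\{1,\alpha\}$; as $\gamma\neq1$ and, in our normalization, $\gamma$ fixes $p$ and $q$ individually whereas an endpoint-swapping involution does not, no such $\alpha$ can satisfy $\alpha^k=\gamma$. Hence every root actually lies in the one-parameter subgroup $T$, and there are at most $k$ of them. The only point at which the argument branches is the order-two case, where $Z(\gamma)$ is disconnected and genuinely contains the swapping involutions alongside $T$; but as just noted that extra component contributes nothing. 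The main obstacle is therefore bookkeeping rather than conceptual: one must verify the normal forms, confirm that no flip commuting with $\gamma$ yields a root, and keep track of the $\pm1$ ambiguity inherent in working in $\PSL_2\BC=\SL_2\BC/\{\pm\Id\}$, so that distinct group elements are neither conflated nor double-counted when tallying the finitely many solutions.
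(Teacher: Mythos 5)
Your proposal is correct and follows essentially the same route as the paper, which observes that a $k$-th root of a hyperbolic $\gamma$ must preserve its axis (with "a similar argument" in the parabolic and elliptic cases) and declares finiteness to follow easily. You have simply written out the details the paper omits --- deriving axis-preservation from the fact that roots commute with $\gamma$, solving $\lambda^k=\lambda_0$ in the normalized one-parameter subgroup, handling the parabolic normal form, and correctly disposing of the endpoint-swapping involutions that arise when $\gamma$ has order two --- all of which checks out.
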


Lemma \ref{roots} has the following immediate consequence:

\begin{kor}\label{extension-finite}
Let $\Gamma\subset\Gamma'$ be groups and assume that $\Gamma'$ contains a torsion-free subgroup $H$ such that 
\begin {enumerate}
\item $\Gamma$ and $H$ generate $\Gamma'$, and
\item $\Gamma\cap H$ has finite index in $H$.
\end{enumerate}
 Then for every faithful representation $\rho:\Gamma\to \PSL_2\BC$, the set of representations $\{\rho':\Gamma'\to\PSL_2\BC \ \vert \ \rho'\vert_\Gamma=\rho\}$ is finite.\qed
\end{kor}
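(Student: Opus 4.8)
The plan is to reduce the problem to the finiteness of root sets supplied by Lemma \ref{roots}. First I would observe that since $\Gamma$ and $H$ generate $\Gamma'$ and the restriction $\rho'\vert_\Gamma=\rho$ is prescribed, any extension $\rho'$ is completely determined by its restriction $\rho'\vert_H$. It therefore suffices to bound the number of homomorphisms $H\to\PSL_2\BC$ whose restriction to the finite-index subgroup $K:=\Gamma\cap H$ equals the fixed homomorphism $\rho\vert_K$.

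Next I would exploit the finite index $n:=[H:K]$. Choosing right coset representatives $h_1=1,\dots,h_n$ for $K$ in $H$, every $h\in H$ can be written as $kh_j$ with $k\in K$, so that $\rho'(h)=\rho(k)\,\rho'(h_j)$. Hence $\rho'\vert_H$ is determined by the finite tuple $(\rho'(h_1),\dots,\rho'(h_n))$ together with the fixed data $\rho\vert_K$. It then remains only to see that each coordinate $\rho'(h_j)$ ranges over a finite set.

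Here is where Lemma \ref{roots} enters. For a fixed $h_j\neq 1$, a pigeonhole argument among the $n+1$ cosets $h_j^0K,h_j^1K,\dots,h_j^nK$ produces an exponent $1\le m\le n$ with $h_j^m\in K$. Since $H$ is torsion-free, $h_j^m\neq 1$; since $\rho$ is faithful and $h_j^m\in\Gamma$, the element $\rho(h_j^m)$ is a nontrivial element of $\PSL_2\BC$. From $\rho'(h_j)^m=\rho'(h_j^m)=\rho(h_j^m)$ I conclude that $\rho'(h_j)$ is an $m$-th root of a fixed nontrivial element, and Lemma \ref{roots} guarantees that there are only finitely many such roots (the case $h_1=1$ being trivial).

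Combining these steps, the tuple $(\rho'(h_1),\dots,\rho'(h_n))$ lies in a finite set, so there are only finitely many possibilities for $\rho'\vert_H$ and hence for $\rho'$. The only point requiring care, and the reason for isolating $K$, is that $H$ itself is not assumed finitely generated; the hypothesis that $\Gamma\cap H$ has finite index in $H$ is precisely what lets me replace \emph{values on a generating set of $H$} by \emph{values on the finitely many coset representatives}, after which the root-finiteness of Lemma \ref{roots} does the rest.
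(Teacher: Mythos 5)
Your proof is correct and takes the same route the paper intends: the corollary is stated there with no written proof, as an immediate consequence of Lemma \ref{roots}, and your argument---extensions are pinned down by their values on the finitely many coset representatives of $\Gamma\cap H$ in $H$, each such value being an $m$-th root of the fixed nontrivial element $\rho(h_j^m)$ (nontrivial by torsion-freeness of $H$ and faithfulness of $\rho$), hence confined to a finite set by Lemma \ref{roots}---is exactly that consequence spelled out. The pigeonhole step producing $h_j^m\in\Gamma\cap H$ with $1\le m\le[H:\Gamma\cap H]$ and your observation that finite index substitutes for finite generation of $H$ are both sound.
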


 Lemma \ref{roots} also has the following consequence, which was stated in the introduction.

\begin{named}{Theorem \ref{anderson}}[Anderson]
Assume that $\Gamma $ is a finitely generated group and $(\rho_i)$ is a sequence of faithful representations in $\calD (\Gamma) $ that converges algebraically to a representation $\rho $ and geometrically to a group $G $.  Then maximal cyclic subgroups of $\rho(\Gamma)$ are maximally cyclic in $G$. In particular, if the image $\rho(\Gamma)$ of the algebraic limit has finite index in the geometric limit $G$, then $\rho(\Gamma)=G$.
\end{named}
\begin{proof}
If some maximal cyclic subgroup of $\rho (\Gamma) $ is not maximally cyclic in $G $, then there is some $g\in G \setminus \rho (\Gamma) $ that powers into it.  So, $g^k =\rho(\eta) $ for some $\eta\in\Gamma$ and $k\ge 2$.  Since $g\in G$ there are $\gamma_i\in\Gamma$ with $\lim_{i \to \infty }\rho_i(\gamma_i)=g$. Taking powers we have then
$$\lim_{i \to \infty }\rho_i(\gamma_i^k)=g^k=\rho (\eta) =\lim_{i \to \infty }\rho_i(\eta).  $$
It follows from the Margulis lemma that $\rho_i(\gamma_i^k)=\rho_i(\eta)$ for sufficiently large $i$.  \it Since the representations $\rho_i$ are faithful, \rm this implies that $\gamma_i^k=\eta$ for large $i$.  But the group $\Gamma$ embeds in $\PSL_2\BC $, by say $\rho_7 $, so Lemma \ref{roots} shows that each of its elements has only finitely many $k^{th } $-roots.  This implies that up to passing to a subsequence we may assume that $\gamma_i=\gamma_j$ for all pairs $ (i,j) $, and hence $g$ belongs to the algebraic limit.  This is a contradiction, so the claim follows.  
\end{proof}

We included this proof because its failure for non-faithful sequences is at the heart of the examples presented in Section \ref{sec:examples}.

\subsection{Maximal Cusps}
\label{maximalcusps}
Finally, we describe a class of hyperbolic manifolds that we will use as building blocks in constructing our examples in Section \ref{sec:examples}.  A good reference for this section is \cite{Canary-Culler-Hersonsky-Shalen}.

Assume that $ M $ is a compact, orientable, irreducible and atoroidal $3$-manifold with interior $ N $ and no torus boundary components.  If $N $ has a geometrically finite hyperbolic metric, then there is a collection $C $ of disjoint simple closed curves in $\partial M $ such that $$N \cup \partial_c N \cong M \setminus \cup_{\gamma \in C} \gamma, $$ by a homeomorphism whose restriction to $N $ is isotopic to the inclusion $N \subset M $.  Here, $\partial_c N $ is the \it conformal boundary \rm defined in Section \ref {conformalboundaries}.

The curves in $C $ are determined up to isotopy, and correspond to the rank one cusps in $N $.  We will say that the collection $C $ has been \it pinched. \rm Each curve in $C $ is homotopically nontrivial in $M $ and no two curves in $C $ are freely homotopic in $M $.  It follows from Thurston's Hyperbolization Theorem \cite{Kapovich,Otal-Haken} that any collection of curves on $\partial M $ satisfying these two topological constraints can be obtained as above from a hyperbolic structure on $N $; it is therefore said that such a collection is \it pinchable.  \rm 

One says that a component $S \subset \partial M $ is \it maximally cusped \rm with respect to some geometrically finite structure on $N $ if the associated collection $C $ contains a pants decomposition for $S $.  In this case, any component of $\partial CC (N) $ that faces $S $ is a totally geodesic hyperbolic thrice-punctured sphere.  Given two hyperbolic $3$-manifolds with maximally cusped ends, we can topologically glue their convex cores together along these thrice-punctured spheres.  Since every homeomorphism between hyperbolic thrice-punctured spheres can be isotoped to an isometry, after altering the identifications we can ensure that our gluing produces a hyperbolic $3$-manifold.  Here is a precise description of the result of this process.

\begin{lem}[Gluing Maximal Cusps]
\label{gluingcusps}
Let $M_1, M_2$ be compact, orientable $3$-manifolds with interiors $N_i$ that have geometrically finite hyperbolic metrics.  Assume that $S_i $ are maximally cusped components of $\partial M_i $ with pinched pants decompositions $ P_i \subset S_i $.  Then if $h : S_1 \to S_2 $ is a homeomorphism with $h (P_1) = P_2 $, there is a hyperbolic $3$-manifold $N $ with the following properties:
\begin{itemize}
\item $N \cong ( M_1 \setminus P_1) \sqcup_h (M_2 \setminus P_2) $, so $N $ has a rank 2 Icusp corresponding to each element of $P_1 $ (or $P_2 $)
\item $N $ is the union of two subspaces with disjoint interiors, which are isometric to $N_1 \setminus E_1 $ and $N_2 \setminus E_2 $, where $E_i$ are the components of $N_i \setminus CC (N_i ) $ adjacent to $S_i $.

\end{itemize}
\end{lem}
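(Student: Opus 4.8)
The plan is to realize the topological gluing geometrically by exploiting the rigidity of thrice-punctured spheres and then carefully matching the parabolic structures across the gluing surface. First I would analyze the geometry near the maximally cusped ends. By hypothesis, the pinched pants decomposition $P_i$ fills out a maximal cusp along $S_i$, so the component of $\partial CC(N_i)$ facing $S_i$ is a disjoint union of totally geodesic thrice-punctured spheres, one for each pair of pants of $S_i \setminus P_i$. The key rigidity input is that a complete hyperbolic thrice-punctured sphere is \emph{unique}: it is isometric to $\BH^2/\Gamma_0$ for a fixed $\Gamma_0$, and moreover any homeomorphism between two such spheres is isotopic to an isometry. This is what allows the combinatorial gluing data $h$ with $h(P_1)=P_2$ to be promoted to an isometric identification of the geodesic boundary pieces of $CC(N_1)$ and $CC(N_2)$.

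Next I would set up the decomposition of each $N_i$. Write $E_i$ for the components of $N_i \setminus CC(N_i)$ adjacent to $S_i$; removing these, the ``truncated'' manifold $N_i \setminus E_i$ is a hyperbolic manifold whose relevant boundary is exactly the totally geodesic thrice-punctured spheres facing $S_i$. The gluing map $h$, restricted to each pair of pants of $S_i \setminus P_i$, determines which thrice-punctured sphere of $CC(N_1)$ is matched with which of $CC(N_2)$, and by the rigidity statement above each such matching can be isotoped to an orientation-reversing isometry of the totally geodesic boundary faces. I would then define $N$ as the metric space obtained from $N_1 \setminus E_1$ and $N_2 \setminus E_2$ by gluing along these isometries of the boundary thrice-punctured spheres. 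The topological identification $N \cong (M_1 \setminus P_1)\sqcup_h(M_2 \setminus P_2)$ is immediate from how $E_i$ and $CC(N_i)$ sit inside $M_i$, and the rank-$2$ cusps arise because each curve of $P_1$ (equivalently $P_2$) becomes a closed loop around which the two rank-$1$ cusp regions on either side combine.

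The main obstacle, and the step requiring the most care, is showing that the glued object is genuinely a \emph{smooth hyperbolic} $3$-manifold rather than merely a piecewise-hyperbolic space with a singular locus along the gluing surface. Here I would invoke the standard doubling/reflection principle for totally geodesic boundary: because both faces being identified are totally geodesic, the hyperbolic metrics extend $C^\infty$ across the gluing locus with no cone angle or crease defect, exactly as in the classical doubling of a manifold across a totally geodesic boundary. Concretely, near a point of the gluing surface one develops both sides into $\BH^3$ so that the shared face lands on a common totally geodesic plane; since each side is locally the region on one side of that plane, the union is an open subset of $\BH^3$ and the transition is an isometry. One must also check the behavior at the cusps: the rank-$1$ cusp neighborhoods of $N_1$ and $N_2$ abutting a curve $\gamma \in P_1$ match up their parabolic screw-motion structures to form a single rank-$2$ cusp, which follows from the fact that the peripheral structure of a thrice-punctured sphere forces the relevant parabolics to agree. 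Once local hyperbolicity and completeness are verified, $N = \BH^3/G$ for the discrete group $G$ generated by the two pieces and the gluing isometries, and both listed properties hold by construction.
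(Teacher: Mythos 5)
Your proposal is correct and takes essentially the same approach as the paper, which proves the lemma via the discussion immediately preceding it: the components of $\partial CC(N_i)$ facing a maximally cusped end are totally geodesic thrice-punctured spheres, every homeomorphism between hyperbolic thrice-punctured spheres is isotopic to an isometry (so $h$ can be promoted to an isometric identification), and gluing the truncated convex cores along these isometries yields the hyperbolic manifold $N$. Your extra verifications---smoothness of the metric across the totally geodesic gluing locus and the assembly of matching rank-$1$ cusp regions into rank-$2$ cusps---merely fill in details the paper leaves implicit, deferring to its cited reference (Canary--Culler--Hersonsky--Shalen).
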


\section{Examples}\label{sec:examples}

In this section we construct the examples mentioned in the introduction.  All our constructions follow the same general strategy.  We will describe this here before starting with the examples in detail.  

We first construct a finitely generated group $\hat\Gamma $ and a sequence of (unfaithful) discrete representations $ (\sigma _i ) $ in $\calD (\hat\Gamma) $ that converges \it strongly \rm to a faithful representation $\sigma_\infty$ whose image $\sigma_\infty(\hat\Gamma)$ has no parabolics.  Recall that $\sigma_i \to \sigma_\infty $ \it strongly \rm if it does so algebraically and $\sigma_i(\hat\Gamma)\to \sigma_\infty (\hat \Gamma)$ geometrically.  
Next, we find a proper subgroup $\Gamma\subset\hat\Gamma$ such that $$\sigma_i(\Gamma)=\sigma_i(\hat\Gamma) \text { for all } i\in\BN$$ and let $\rho_i=\sigma_i\vert_{\Gamma}$. By construction, $\rho_i \to \rho_\infty =\sigma_\infty |_{\Gamma } $ algebraically and $\rho_i (\Gamma) \to \sigma_\infty (\hat\Gamma) $ geometrically. Since $\Gamma$ is a proper subgroup of $\hat\Gamma$ and $\sigma_\infty$ is faithful, we obtain that the algebraic limit $\rho_\infty(\Gamma)=\sigma_\infty(\Gamma)$ is a proper subgroup of the geometric limit $\sigma_\infty(\hat\Gamma)$.  

\begin{named}{Example \ref{ex1}} 
Let $\Gamma$ be the fundamental group of a closed orientable surface of genus $3$. There is a sequence of representations $(\rho_i) $ in $\calD (\Gamma) $ that converges algebraically to a faithful representation $\rho $ and geometrically to a group $G $ such that
\begin{itemize}
\item $G $ has no parabolics
\item $\rho(\Gamma)$ has index $2$ in $G$.
\end{itemize} 
\end{named}

In the following, we let $S $ be a closed orientable surface of genus $g $.  The group $\hat \Gamma $ from the strategy above will be $\pi_1 S $, while $\Gamma $ will be the fundamental group of a degree $2 $ cover of $S $.  The concise statement of the example above is the special case $g= 2 $, but our argument below works for any closed orientable surface.

To begin with, let $H $ be a handlebody with boundary $S $.  Recall that the \it Masur domain \rm  $O_H \subset \PML (S) $ is the set of all projective measured laminations on $S $ that intersect positively with every element of $\PML (S) $ that is a limit of meridians.  One calls a pseudo-Anosov map $f: S\to S $ \it generic \rm if its attracting lamination $\lambda $ lies in $O_H $.  To see that such maps exist, note that $O_H $ is open and nonempty and that the attracting laminations of pseudo-Anosov maps are dense in $\PML (S) $.  The density follows from the density of simple closed curves, for conjugating any given pseudo-Anosov map by powers of a Dehn twist produces a sequence of pseudo-Anosov maps whose attracting laminations limit to the twisting curve in $\PML (S) $.

\vspace{2mm} \noindent{\bf A Sequence of Convex-Cocompact Handlebodies:} \rm Fix a generic pseudo-Anosov map $f: S \to S $ with attracting lamination $\lambda $ and repelling lamination $\bar \lambda $.  Theorem \ref{AhlforsBers} implies that the deformation space of convex-cocompact hyperbolic metrics on a hyperbolizable $3$-manifold is parameterized by the Teichmuller space of its boundary.  So, we can produce a sequence of convex-cocompact metrics on $H $ corresponding to an orbit of $f $ on $\CT (S) $.  In other words, after fixing a conformal structure $X$ on $S $, we have a sequence of convex-cocompact hyperbolic manifolds $(N_i) $ and a sequence of homeomorphisms $$h_i : (H,S) \to ( N_i ,\partial_c N_i) $$ such that $h_i \circ f^{i} : S \to \partial _c N_i $ is conformal with respect to $X$.  Here, $\partial_c N_i $ is the \it conformal boundary \rm of $N_i $ discussed in Section \ref {conformalboundaries}.

\vspace{2mm} \noindent {\bf Good Markings:} \rm  By Proposition \ref{conformalconvex} and the fact that the Poincar\'e metric on $\partial _c N_i $ is constant in moduli space, the nearest point projection $\eta_i : \partial _c { N_i } \to\partial CC (N_i) $ is $K $-lipschitz for some constant $K $ independent of $i $.  Considering $S $ with its Poincar\'e metric, the map $$\sigma_i : = \eta_i \circ h_i \circ f^i : S \to N_i $$ is $K $-lipschitz as well.  Since it is $\pi_1 $-surjective, we may use $\sigma_i $ to mark the fundamental group of $N_i $ with $\pi_1 (S) $.  Specifically, after taking a base point $ p \in S $, the surjections $ (\sigma_i)_* :  \pi_1 (S, p) \to\pi_1 (N_i,\sigma_i (p)) $ determine up to conjugacy a sequence of representations $$\rho_i : \pi_1 (S, p) \to \PSL (2,\BC), \, \,  \Hyp^3 / \rho_i (\pi_1 (S, p)) = N_i.  $$

\vspace{2mm} \noindent {\bf Strong convergence and the limit: } \rm The goal here is to establish the following proposition.
\begin{prop}
\label {characterization}
The sequence $(\rho_i) $ converges strongly to a representation $\rho_\infty : \pi_1 (S, p) \to \PSL (2,\BC) $ that is faithful and purely loxodromic. Its quotient $N_\infty = \BH^ 3/ \rho_\infty (\pi_1 (S, p)) $ is homeomorphic to $S \times \BR $ and has no cusps. One of the ends of $N_\infty $ is convex-cocompact with conformal boundary $X$ and the other is degenerate with ending lamination $\bar\lambda$.\label {findthelimit}
\end{prop}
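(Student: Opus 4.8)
The plan is to extract an algebraic limit by a compactness argument, then read off the geometry of the two ends of the limiting manifold, and finally upgrade algebraic convergence to strong convergence using the Covering Theorem. For the first step, since each $\sigma_i=\eta_i\circ h_i\circ f^i$ is $K$-lipschitz out of $(S,X)$ and $\pi_1$-surjective, for every $\gamma\in\pi_1 S$ the translation length of $\rho_i(\gamma)$ is at most $K\,\ell_X(\gamma)$, giving uniform upper bounds on a generating set. To prevent the representations from degenerating I would bound the injectivity radius of $N_i$ at the basepoint $\sigma_i(p)$ from below: the basepoint lies on $\partial CC(N_i)$, the conformal boundary is the \emph{fixed} surface $X$ throughout, and Proposition~\ref{conformalconvex} together with the standard comparison between $\partial_c N_i$ and $\partial CC(N_i)$ shows that $\partial CC(N_i)$ is uniformly bilipschitz to $X$; since $X$ is a fixed closed hyperbolic surface its injectivity radius is bounded below. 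With an upper length bound on generators and a lower injectivity bound at a common basepoint, a subsequence of $(\rho_i)$ converges algebraically, after conjugation, to some $\rho_\infty$, which by Jorgensen (Proposition~\ref{Jorgensen}) again lies in $\calD(\pi_1 S)$; by the Tameness Theorem the quotient $N_\infty$ is tame, and the $\sigma_i$ limit to a $K$-lipschitz, $\pi_1$-surjective map $\sigma_\infty:S\to N_\infty$.

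\emph{Identifying the ends.} Because the conformal structure on the handlebody side is pinned at $X$, the geometry near $\partial CC(N_i)$ converges and $N_\infty$ inherits a convex-cocompact end with conformal boundary $X$. For the opposite end I would track the compressible curves: read through the marking $\sigma_i$, the meridians of $N_i$ are $f^{-i}(\text{meridian})$, which converge in $\PML(S)$ to the repelling lamination $\bar\lambda$ of $f$; by genericity $\bar\lambda\in O_H$ lies in the Masur domain. This is precisely where the Masur-domain hypothesis becomes indispensable, and it is the crux of the argument: using the Masur-domain machinery one shows that no essential curve is collapsed in the limit, so $\rho_\infty$ is \emph{faithful}, whence $N_\infty\cong S\times\BR$, and the second end is degenerate with ending lamination $\bar\lambda$ — the meridians bound disks that are pushed out this end, so $\bar\lambda$ is exactly the limit of curves whose geodesic representatives exit it.

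\emph{No parabolics.} Once the end structure is known, purely loxodromic follows quickly: the convex-cocompact end carries no cusps, the degenerate end has filling arational ending lamination $\bar\lambda$ and hence no annular cusp, and any accidental parabolic would correspond to a pinched simple closed curve on $S$, which is impossible since nothing is pinched on the $X$-side and $\bar\lambda$ is filling. Thus $N_\infty$ has no cusps.

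\emph{Strong convergence.} Finally, the inclusion $\rho_\infty(\pi_1 S)\subseteq G$ yields a cover $N_\infty\to N_G=\BH^3/G$, and I would show it has degree one. The Covering Theorem~\ref{thecoveringtheorem} forces the degenerate end of $N_\infty$ to cover a degenerate end of $N_G$ with finite degree; combined with the $\pi_1$-surjectivity of the $\sigma_i$, which shows that all of $N_i$ (and hence $N_G$) is accounted for by the image of the surface, this pins the degree to one, so $G=\rho_\infty(\pi_1 S)$ and the convergence is strong. It is worth stressing that this step must be carried out independently of Theorem~\ref{AC}, since the $\rho_i$ are unfaithful — they factor through the free group $\pi_1 H$ — and the fact that the limit is nonetheless faithful is exactly the phenomenon that makes this example the prototype for the rest of the paper. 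The main obstacle throughout is the identification of the limit, and in particular its faithfulness, for which the Masur-domain genericity of $\lambda$ is essential.
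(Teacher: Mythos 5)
Your outline follows the paper's architecture (compactness, end analysis, degree-one covering), but two of your steps fail as stated, and both are at the points you yourself flag as the crux. First, faithfulness: the genericity hypothesis puts the \emph{attracting} lamination $\lambda$ in the Masur domain, not the repelling one; your assertion that ``by genericity $\bar\lambda\in O_H$'' is unjustified by the setup, and in any case the backward iterates $f^{-i}(m)$ of meridians (which do converge to $\bar\lambda$) are the wrong objects for injectivity --- they serve only to show $\bar\lambda$ is unrealized, hence that the second end is degenerate with ending lamination $\bar\lambda$ (as in \cite[Section 4]{Hossein-Souto}). The actual mechanism for faithfulness, absent from your sketch, is: by the Loop Theorem it suffices to show $\rho_\infty(\gamma)\neq\Id$ for \emph{simple} closed curves $\gamma$; the \emph{forward} iterates satisfy $f^{i}(\gamma)\to\lambda\in O_H$, and since $O_H$ is open, $f^i(\gamma)\in O_H$ for large $i$, hence is incompressible in $H$, so $\rho_i(\gamma)\neq\Id$; a Margulis lemma argument then rules out $\rho_\infty(\gamma)=\Id$. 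Your appeal to unspecified ``Masur-domain machinery'' with the roles of $\lambda$ and $\bar\lambda$ interchanged does not produce this.

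Second, strong convergence: the inference ``$\pi_1$-surjectivity of the $\sigma_i$ shows that all of $N_G$ is accounted for by the image of the surface'' is precisely the inference whose failure this paper documents. Surjectivity of markings does not persist to geometric limits: the restricted markings $\rho_i|_\Gamma$ constructed at the end of this very example are $\pi_1$-surjective onto each $\pi_1(N_i)$, yet their geometric limit strictly contains the algebraic one, so your argument would ``prove'' strong convergence there too, where it is false. Moreover the Covering Theorem (Theorem \ref{thecoveringtheorem}) only yields \emph{finite} degree near the degenerate end, not degree one, so it cannot close the loop by itself. The paper's replacement has two ingredients you are missing: (i) the Gromov--Hausdorff limit of the convex cores $CC(N_{i_j})$ shows $\sigma_G$ is an embedding bounding a convex-cocompact end of $N_G$ that lifts homeomorphically to a neighborhood $E'\subset N_\infty$ on which $\pi$ is injective; (ii) by Thurston \cite{KT} and Bonahon \cite{Bonahon} the injectivity radius of $N_\infty$ is bounded above, by some $K$, on $N_\infty\setminus E'$ (which is $CC(N_\infty)$ plus a compact set), so choosing $x\in N_G$ deep in its convex-cocompact end with $\inj(x,N_G)>K$ forces $\pi^{-1}(x)\subset E'$, whence $|\pi^{-1}(x)|=1$ and $\pi$ is trivial. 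Finally, note that you only extract a subsequential limit, while the proposition asserts convergence of the whole sequence: the paper handles this by invoking the Ending Lamination Theorem \cite{ELC1,ELC2}, which uniquely characterizes the model $N_\infty$, reducing the claim to showing that every subsequence has a further subsequence converging strongly to that one model.
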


Here, $X$ is the base conformal structure on $S$ fixed above and $\bar\lambda$ is the repelling lamination of $f$.  Recall that an algebraically convergent sequence $\rho_i \to \rho_\infty $ converges \it strongly \rm if in addition the images of $(\rho_i)$ converge to the image of $\rho_\infty $ geometrically.   

\begin{figure}
\label{characterizationfigure}
\includegraphics{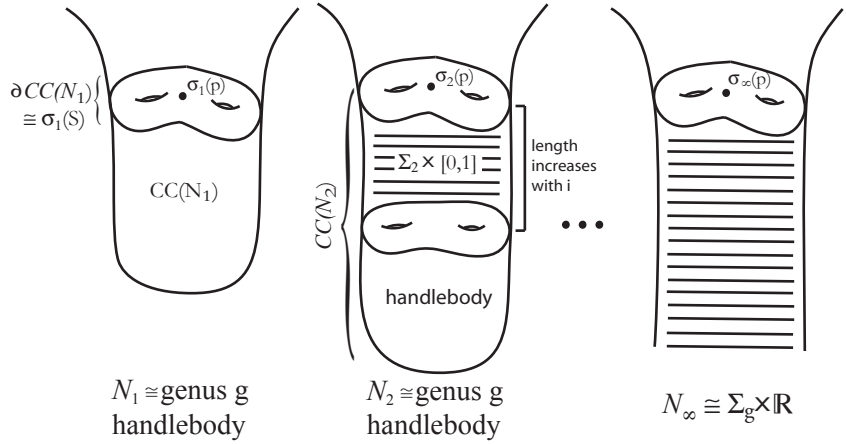}
\caption{A schematic of the convergence $N_i \to N_\infty $.}
\end{figure}

\vspace{2mm}

The resolution of Thurston's Ending Lamination Conjecture by Minsky \cite{ELC1} and Brock-Canary-Minsky \cite {ELC2} implies that $N_\infty $ is uniquely characterized by the description given in Proposition \ref {findthelimit}.   So to prove Proposition \ref {findthelimit}, it suffices to show that every subsequence of $(\rho_i) $ has a subsequence that converges strongly to some $\rho_\infty $ as described above.

Start with some subsequence $(\rho_{i_j}) $ of $(\rho_i )$. Passing to a further subsequence, we may assume that $(\rho_{i_j}) $ converges algebraically to some
$$\rho_\infty : \pi_1 (S, p) \to \PSL (2,\BC) , \text { where }  N_\infty : = \BH^ 3/ \rho_\infty (\pi_1 (S, p)) . $$ We may also assume that the images converge geometrically:$$\rho_{i_j} (\pi_1 (S, p)) \to G \subset \PSL (2,\BC) ,\text { where } N_G :=\Hyp^3 / G.$$ If base-frames are chosen for $N_{i_j} $ on $\partial CC (N_{i_j}) $, the resulting pointed manifolds converge in the Gromov Hausdorff topology to $N_G $.  By Arzela-Ascoli's Theorem, we can pass to a final subsequence such that the $K $-lipschitz maps $\sigma_{i_j} : S \to N_{i_j}$ converge uniformly to some
$$ \sigma_G : S \longrightarrow N_G . $$  There is a natural covering map $\pi : N_\infty \to N_G $ corresponding to the inclusion $\rho_\infty (\pi_1 (S, p)) \subset G $, and $\sigma_G $ lifts to a map $\sigma_{\rho } : S \to N_\infty $ inducing the marking given by $\rho_\infty $.  

\begin{claim}
$\sigma_\infty $ is an embedding, and its image bounds a neighborhood $E' $ of a convex-cocompact end of $N_\infty $.  Furthermore, $E' \cong \Sigma_g \times \BR $ and $\pi |_{E' } : E' \to N_G $ is an embedding.  
\end{claim}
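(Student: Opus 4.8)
The plan is to build, inside each $N_i$, an explicit product neighborhood of the convex-cocompact end whose inner boundary is $\sigma_i(S)$, and then to carry this neighborhood to its geometric limit inside the cover $N_\infty$. First I would note that each $\sigma_i$ is actually a homeomorphism onto $\partial CC(N_i)$: the map $h_i\circ f^i\colon S\to\partial_c N_i$ is a homeomorphism by construction, and since $N_i$ is convex-cocompact and cusp-free the closest-point projection $\eta_i\colon\partial_c N_i\to\partial CC(N_i)$ is a homeomorphism as well. Because the Poincar\'e metric on $\partial_c N_i$ is constant in moduli space and $\eta_i$ is $K$-Lipschitz (Proposition \ref{conformalconvex}), the surfaces $\partial CC(N_i)$ have uniformly bounded diameter and area. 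After flowing a bounded distance outward I may assume $\sigma_i$ lands on a smooth, strictly convex surface $\Sigma_i$ bounding the convex-cocompact end $E_i=N_i\setminus CC(N_i)$, which by Lemma \ref{exhaustion-convex} is swept out by the outward normal geodesic rays from $\Sigma_i$. This produces diffeomorphisms
$$G_i\colon S\times[0,\infty)\longrightarrow\overline{E_i}\subset N_i,\qquad G_i(x,t)=\exp_{\sigma_i(x)}\!\big(t\,\nu(\sigma_i(x))\big),$$
extending $\sigma_i=G_i(\cdot,0)$, where $\nu$ is the outer unit normal along $\Sigma_i$.

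Next I would pass to the geometric limit. Choosing the base frames on $\Sigma_{i_j}$ and using the almost isometric embeddings provided by geometric convergence, the maps $G_{i_j}$ subconverge uniformly on each slab $S\times[0,T]$ to a map $G_G\colon S\times[0,\infty)\to N_G$ extending $\sigma_G$, which lifts through $\pi\colon N_\infty\to N_G$ to
$$G_\infty\colon S\times[0,\infty)\longrightarrow N_\infty,\qquad G_\infty(\cdot,0)=\sigma_\rho=:\sigma_\infty.$$
Since convexity is a closed condition, the limit surface $\Sigma_\infty=\sigma_\infty(S)$ bounds a convex region and the curves $t\mapsto G_\infty(x,t)$ are outward normal geodesic rays. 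Moreover the nearest-point retractions $N_{i_j}\to CC(N_{i_j})$ are $1$-Lipschitz and send $G_{i_j}(x,t)$ to $\sigma_{i_j}(x)$, so in the limit I obtain a $1$-Lipschitz retraction $r_\infty$ of the image of $G_\infty$ onto $\Sigma_\infty$ satisfying $r_\infty\big(G_\infty(x,t)\big)=\sigma_\infty(x)$ and $d\big(G_\infty(x,t),\Sigma_\infty\big)=t$.

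With this in place I would deduce the embedding statement. The retraction identity reduces the injectivity of $G_\infty$ to that of $\sigma_\infty$ on $S$: if $G_\infty(x,t)=G_\infty(x',t')$ then $t=t'$ and $\sigma_\infty(x)=\sigma_\infty(x')$. The image of $G_\infty$ is the complement of a convex set, so two normal rays issuing from distinct sheets through a common point cannot cross transversally, the convex region lying to one side; ruling this out shows $\sigma_\infty$ is one-to-one, so $G_\infty$ is a proper embedding onto the closure of a neighborhood $E'$ of a convex-cocompact end of $N_\infty$ with $E'\cong\Sigma_g\times\BR$. Finally, running the same convexity argument downstairs shows $G_G$ is injective; since $\pi\circ G_\infty=G_G$ and $\pi$ is the orbit map of the inclusion $\rho_\infty(\pi_1 S)\subset G$, injectivity of $G_G$ is exactly the statement that $\pi$ identifies no two points of $E'$, i.e.\ that $\pi|_{E'}$ is an embedding.

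The hard part will be the injectivity of the limiting surface $\sigma_\infty$, and specifically the exclusion of \emph{tangential} self-contact: a priori the embedded, strictly convex boundaries $\Sigma_{i_j}$ could fold back on themselves in the geometric limit so that two widely separated points collide, and the whole claim rests on ruling this out after lifting to $N_\infty$. The mechanism I would exploit is that each $\overline{E_{i_j}}$ lies to one side of a convex set and is foliated by pairwise disjoint, divergent normal geodesic rays coming from a \emph{strictly} convex surface; this divergence is stable enough under geometric limits that the limiting normal rays in $N_\infty$ remain disjoint and $\Sigma_\infty$ stays embedded. This is precisely the convexity that will later force $\rho_\infty$ to be faithful and purely loxodromic.
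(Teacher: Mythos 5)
Your scaffolding runs parallel to the paper's proof, which works entirely downstairs: the paper takes a Gromov--Hausdorff limit $K \subset N_G$ of the convex cores $CC(N_{i_j})$, notes that $K$ is convex and contains $CC(N_G)$, identifies $\partial K$ with the image of $\sigma_G$, deduces embeddedness ``using convexity,'' gets the product structure from the nearest-point retraction, and then lifts the resulting end to $N_\infty$. Your normal-coordinate maps $G_i$, the passage to limits, the retraction identity, and the final lifting step are all fine, and your reduction of the injectivity of $G_\infty$ to the injectivity of $\sigma_\infty$ is correct. The problem is the mechanism you offer at the crux you yourself flagged.

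Disjointness and divergence of the outward normal rays does \emph{not} rule out tangential self-contact. The dangerous degeneration is a pinching lens: two sheets of the single embedded surface $\Sigma_{i_j}$, far apart in $S$, approach each other with the convex set $K_{i_j}$ trapped between them. Such a thin lens is itself convex (it is an intersection of two convex regions), so convexity of each $K_{i_j}$ does not forbid it; and in the limit the sheets touch at a point $p$ with \emph{opposite} outward normals, so the two families of rays emanate from $p$ in opposite directions and remain pairwise disjoint for all $t>0$. Your criterion is thus satisfied even though $\sigma_\infty(x)=\sigma_\infty(x')=p$ --- indeed, by your own retraction identity $G_\infty$ stays injective on $S\times(0,\infty)$ in this scenario, and only the boundary map folds. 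What actually excludes the pinch, and what the paper's one-line appeal to convexity amounts to, is a property of the \emph{limit set} rather than of the rays: the limit $K$ of the sets $K_{i_j}$ is itself convex and has nonempty interior (it contains a definite half-ball near the basepoint). A self-tangency of $\partial K$ at $p$ would squeeze $K$, locally near $p$, between two supporting configurations curving away from each other at the uniform rate $\tanh d$, forcing $K$ to have sublinear tangential thickness at $p$; but the convex hull of $p$ with an interior ball of $K$ contains a solid cone of definite aperture at $p$, whose thickness at distance $r$ is comparable to $r$ --- a contradiction. So you should replace the ray-divergence argument by this convexity-of-the-limit argument (or simply work with $K$ as the paper does). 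A smaller point: by flowing out distance $d$ you prove embeddedness for the pushed-out surface rather than for $\sigma_\infty$ itself, whose image is the limit of the surfaces $\partial CC(N_{i_j})$; either run the supporting-plane argument at $d=0$ or note that the later applications of the claim only use the end neighborhood $E'$.
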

\begin{proof}
We will show that the image of $\sigma_G $ bounds a convex-cocompact end in $N_G $ homeomorphic to $\Sigma_g \times \BR $.  It will follow immediately that this end lifts to an end $E' $ as desired.  

Let $K \subset N_G $ be a Gromov-Hausdorff limit of the sequence of convex cores $CC(N_{i_j}) $, passing to another subsequence as necessary.  Note that $K $ is convex and contains $CC (N_G) $.  Its boundary $\partial K $ is the limit of the boundaries $\partial CC(N_{i_j}) $, and is therefore the image of $\sigma_G $.  Using convexity, it is then not hard to see that $\sigma_G $ is an embedding.  Setting $E = N_G \setminus K $, we obtain a convex-cocompact end of $N_G $.  As $\partial E \cong \Sigma_g $, the nearest point retraction gives a homeomorphism $N_G \setminus K \cong \Sigma_g \times (0,\infty) $.  \end{proof}

The following claim is the reason our pseudo-Anosov map $f : S \to S $ was chosen to have attracting lamination $\lambda $ in the Masur domain $O_H $.  

\begin{claim} 
$\rho_\infty $ is faithful.
\end{claim}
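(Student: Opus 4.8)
The plan is to show that $\rho_\infty$ is faithful by exploiting the genericity of $f$, i.e. the fact that its attracting lamination $\lambda$ lies in the Masur domain $O_H$, together with the description of $N_\infty$ already established in the preceding claim. The key structural fact is that we have a covering $\pi : N_\infty \to N_G$, and by the previous claim one end $E'$ of $N_\infty$ is convex-cocompact with conformal boundary $X$, mapped embeddingly by $\pi$. Since $\rho_\infty(\pi_1 S)$ already has the surface group as its image under the marking $\sigma_\rho$, it suffices to rule out that $\rho_\infty$ has a nontrivial kernel. Because $\pi_1 S$ is a surface group, any nontrivial normal subgroup is either all of $\pi_1 S$ or is infinitely generated and of infinite index; since $\rho_\infty$ is nonelementary and discrete (by Proposition \ref{Jorgensen}) the image is nontrivial, so the real content is to preclude a proper nontrivial kernel.

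\textbf{Main line of argument.} First I would record that $N_\infty$ is tame (Tameness Theorem) with finitely generated fundamental group and no cusps (the latter because $\rho_\infty$ is purely loxodromic, which should be verified en route or taken from the strong-convergence setup). The first claim gives one convex-cocompact end $E'$ modeled on $\Sigma_g \times \BR$ with $\pi_1$-image carrying the full marking, so $\rho_\infty$ restricted to $\pi_1$ of that end is already the whole group $\pi_1 S$. I would then argue that if $\rho_\infty$ failed to be faithful, its kernel $N \triangleleft \pi_1 S$ would be nontrivial, forcing the quotient $\pi_1 S / N = \rho_\infty(\pi_1 S)$ to be the fundamental group of a hyperbolic $3$-manifold that is a \emph{proper} quotient of a surface group. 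The other end of $N_\infty$ must be degenerate; I would identify its ending lamination. The crucial point is that the degenerate end's ending lamination is forced (via the geometry of the $f^i$-deformations) to be the repelling lamination $\bar\lambda$ of $f$, which — because $f$ is generic — lies in the Masur domain as well. A nontrivial kernel would produce a compressible direction realized by a meridian of $H$, and genericity (positivity of intersection with every limit of meridians) is precisely what obstructs any meridian from being killed or from degenerating; this is how the Masur-domain hypothesis enters.

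\textbf{Carrying it out.} Concretely, I would suppose for contradiction that some nontrivial $\gamma \in \ker \rho_\infty$. Using the marking map $\sigma_{i} = \eta_i \circ h_i \circ f^i$ and its $K$-Lipschitz bound (uniform in $i$), together with the strong convergence, I would track the curve $f^{-i}(\gamma)$ or the relevant geodesic representatives and show that $\gamma \in \ker\rho_\infty$ forces a meridian-like curve to have bounded length in the degenerate end, contradicting that the ending lamination $\bar\lambda$ is in the Masur domain and hence intersects every limit of meridians positively. The core geometric input is the Covering Theorem (Theorem \ref{thecoveringtheorem}) controlling how the degenerate end of $N_\infty$ sits over $N_G$, and the Masur-domain condition guaranteeing incompressibility of the degenerate end in the limit.

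\textbf{Expected obstacle.} The hardest step is making rigorous the claim that genericity of $\lambda$ (equivalently $\bar\lambda \in O_H$) obstructs a nontrivial kernel — i.e. translating the combinatorial/topological Masur-domain condition into the statement that no nontrivial element of $\pi_1 S$ dies in the limit. This requires controlling the behavior of meridians under the pseudo-Anosov deformation and invoking the fact (due to the theory of the Masur domain and Canary's covering/branched-cover analysis) that a generic attracting lamination yields a degenerate end whose ending lamination remains in the Masur domain, so that the end stays incompressible and the limit representation stays injective. I expect this to be where the real work lies; the tameness, strong-convergence, and covering-theory ingredients are then bookkeeping by comparison.
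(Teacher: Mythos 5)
There is a genuine gap here: your plan is circular relative to the logical order of the construction. You propose to use the identification of the second end of $N_\infty$ as degenerate with ending lamination $\bar\lambda$, the homeomorphism $N_\infty \cong S \times \BR$, and strong convergence of the sequence --- but all three of these are established only \emph{after} faithfulness, and the first two depend on it: tameness gives $N_\infty \cong S\times\BR$ precisely because $\rho_\infty$ is injective, the unrealizability of $\bar\lambda$ is a separate subsequent claim, and strong convergence is the \emph{last} step of the proposition. If $\rho_\infty$ had a kernel, $N_\infty$ could a priori be a compressible manifold with no degenerate end carrying $\bar\lambda$, and your contradiction evaporates. Moreover, your assertion that $\bar\lambda$ lies in the Masur domain ``because $f$ is generic'' is unjustified: genericity was defined only by the condition $\lambda \in O_H$ on the \emph{attracting} lamination; nothing constrains $\bar\lambda$, and the argument never needs it to.

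The actual proof is far more elementary and avoids the degenerate end entirely. Since the previous claim shows $\sigma_\infty$ is an embedding, faithfulness is equivalent to $\pi_1$-injectivity of the embedded surface $\sigma_\infty(S)\subset N_\infty$, and the Loop Theorem reduces this to showing $\rho_\infty(\gamma)\neq\Id$ for $\gamma$ a \emph{simple} closed curve --- this reduction is the missing idea, and without it your lamination considerations cannot reach arbitrary kernel elements, since $\PML$-arguments only see simple curves. For simple $\gamma$ one has $f^{i_j}(\gamma)\to\lambda$ in $\PML(S)$; openness of $O_H$ puts $f^{i_j}(\gamma)\in O_H$ for large $i_j$, so $f^{i_j}(\gamma)$ is incompressible in $H$, whence $\rho_{i_j}(\gamma)\neq\Id$; a standard Margulis-lemma argument (that kernels cannot suddenly appear in an algebraic limit, cf.\ Lemma \ref{kernelslemma}) then gives $\rho_\infty(\gamma)\neq\Id$. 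Your intuition that genericity is exactly what prevents curves from being killed is correct, but it enters through $\lambda$ and the \emph{forward} iterates $f^{i_j}(\gamma)$; the backward iterates $f^{-i_j}(m)\to\bar\lambda$ are used later, for the separate claim that $\bar\lambda$ is unrealized.
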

\begin{proof}
An equivalent statement  is that the embedding $\sigma_\infty : S \to N_\infty $ is $\pi_1 $-injective.  So by the Loop Theorem, it suffices to check that $\rho_\infty $ is injective on elements of $\pi_1 (S, p) $ representable by simple loops on $S $.

Let $\gamma \subset S $ be a simple closed curve.  Then $f^{i_j} (\gamma) \to \lambda $ in $\PML $; since $\lambda \in O_H $, which is an open subset of $\PML $, for large $i_j$ we have $f^{i_j} (\gamma) \in O_H $ as well.  In particular, $ f^{i_j} (\gamma) $ is not compressible in $H $.  It follows that $$\sigma_{i_j} (\gamma) = \eta_{i_j} \circ h_{i_j} \circ f^{i_j} (\gamma) $$ is incompressible in $N_{i_j} $.  Therefore $\rho_{i_j} (\gamma) \neq \Id $ for sufficiently large $i_j $.  A standard application of the Margulis Lemma shows that $\rho_\infty (\gamma) \neq \Id $ as well (see, e.g.\cite[Theorem 7.1]{Japaner}).  
\end{proof}

Geometrically, the reason that $\rho_\infty $ is faithful is that for large $i $ all compressible curves on $\partial CC (N_i) $ are very long.  Since a fixed generating set for $\pi_1 (S, p) $ maps under our markings to a set of loops on $\partial CC (N_i) $ with uniformly bounded length, this implies that elements of $\pi_1 (S, p) $ representing compressible curves in $N_i $ must for large $i $ be expressible only with very large words in the generators.  So in the limit, there are no compressible curves.

As $\rho_\infty :\pi_1 (S, p) \to \PSL (2,\BC) $ is faithful, the Tameness Theorem implies that $N_\infty \cong S\times \BR $.  From above, we know that one of the two topological ends of $N_\infty $ is convex-cocompact with associated conformal structure $X$.  To analyze the geometry of the other end, we must first prove the following:

\begin{claim}
The lamination $\bar\lambda$ is not realized in $N_\infty$. In particular, $N_\infty$ has a degenerate end with ending lamination $\lambda$.
\end{claim}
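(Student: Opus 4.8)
The plan is to prove the first sentence---that $\bar\lambda$ is not realized in $N_\infty$---and to obtain the statement about ends from the standard theory of Bonahon and Canary. Recall that in a tame, cusp-free hyperbolic $3$-manifold homeomorphic to $S\times\BR$, a \emph{filling} geodesic lamination is unrealizable if and only if it is the ending lamination of a degenerate end. Since $\bar\lambda$ is the repelling lamination of the pseudo-Anosov $f$, it is minimal and filling; so once we know it is unrealized, the end of $N_\infty$ other than the convex-cocompact one (which the previous claim identified, with conformal boundary $X$) must be degenerate with ending lamination $\bar\lambda$. Everything therefore reduces to the non-realization of $\bar\lambda$, and I would note in passing that if both ends were convex-cocompact then $N_\infty$ would be convex-cocompact and \emph{every} lamination would be realized, so the degeneracy is forced.

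The non-realization will come from combining the compressibility of meridians in the handlebody $H$ with the fact that the markings $\sigma_i$ incorporate the powers $f^i$. Fix a meridian $m\subset S=\partial H$, so $m$ bounds a disk in $H$ and hence is trivial in $\pi_1(H)\cong\pi_1(N_i)$. Writing $\iota:\pi_1(S)\to\pi_1(N_i)$ for the map induced by the boundary inclusion $S=\partial H\hookrightarrow H$ followed by the homeomorphism $h_i$, one has $(\sigma_i)_*=\iota\circ(f^i)_*$ (using that the nearest-point projection $\eta_i$ is a homotopy equivalence in the appropriate class). The curves $c_i:=f^{-i}(m)$ then satisfy
$$\rho_i(c_i)=\iota\bigl((f^i)_*(f^{-i})_*(m)\bigr)=\iota(m)=\Id .$$
On the other hand $c_i=f^{-i}(m)\to\bar\lambda$ projectively in $\PML(S)$, since iterating $f^{-1}$ drives every curve other than $\lambda$ toward the attracting lamination of $f^{-1}$, which is $\bar\lambda$. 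Choosing weights $w_i\to 0$ so that $w_i\,c_i$ converges in $\CM\CL(S)$ to a transverse measure supported on $\bar\lambda$, I would invoke the continuity of the geodesic length function on $\CM\CL(S)\times\calD(\pi_1 S)$ under algebraic convergence to conclude
$$\length_{\rho_\infty}(\bar\lambda)=\lim_i \length_{\rho_i}(w_i\,c_i)=\lim_i w_i\cdot\length_{\rho_i}(c_i)=0,$$
where $\length_{\rho_i}(c_i)=0$ because $\rho_i(c_i)=\Id$. A measured lamination with nonzero transverse measure that is realized by a pleated surface has strictly positive length, so $\length_{\rho_\infty}(\bar\lambda)=0$ forces $\bar\lambda$ to be unrealized in $N_\infty$.

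The step I expect to require the most care is the continuity of the length function, together with the passage from ``length zero'' to ``unrealized''. Concretely, one must verify that the weighted iterates $w_i\,c_i$ genuinely converge in $\CM\CL(S)$ (the standard fact that, after rescaling by the dilatation, iterates of a fixed curve under a pseudo-Anosov converge to its stable/unstable lamination) and that the length function is jointly continuous even though the $\rho_i$ are \emph{not} faithful and the geometric limit is strictly larger than the algebraic one. A route avoiding the length function is to argue directly with realizing pleated surfaces: if $\bar\lambda$ were realized by a pleated surface in the compact part of $N_\infty$, then openness and continuity of realizations under strong convergence would realize the nearby curves $c_i$ in $N_i$ by pleated surfaces of uniformly bounded diameter, contradicting $\rho_i(c_i)=\Id$. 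Either route rests on the same mechanism: the meridians of $H$, transported through the markings $\sigma_i$, accumulate on $\bar\lambda$ while remaining null-homotopic, so $\bar\lambda$ cannot be pulled tight in the limit.
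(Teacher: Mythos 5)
Your argument is essentially the paper's: the authors likewise fix a meridian $m$, observe that $f^{-i_j}(m)\in\ker\rho_{i_j}$ while $f^{-i_j}(m)\to\bar\lambda$ in $\PML(S)$, and then cite Namazi--Souto (Section 4) for the implication that $\bar\lambda$ is unrealized --- and that citation covers precisely the pleated-surface/structural-stability-of-realizations argument you offer as your second route. Prefer that route over your length-function one: Brock's continuity theorem is stated on $AH(M)\times\CM\CL(S)$, i.e.\ for discrete \emph{faithful} representations, so it does not directly apply along the unfaithful sequence $(\rho_i)$, exactly the gap you flag yourself.
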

\begin{proof}
Fix a meridian $m$. By definition, the curve $f^{-i_j}(m)$ is in the kernel of the representation $\rho_{i_j}$ for all $i_j$. On the other hand, the sequence $(f^{-i_j}(m))$ converges to $\bar\lambda$ in $\CP\CM\CL(S)$. This implies that $\bar\lambda$ is not realized; see for instance \cite[Section 4]{Hossein-Souto}.
\end{proof}

Since $N_\infty$ is homeomorphic to $S\times\BR$ and has a convex-cocompact end and a degenerate end, we deduce that $\rho_\infty$ is purely loxodromic. 

\begin{claim}
$\rho_i $ converges to $\rho_\infty $ strongly.
\end{claim}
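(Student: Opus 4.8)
The plan is to show that the covering $\pi : N_\infty \to N_G$ constructed above actually has degree one, i.e.\ is an isometry. Since $\pi$ corresponds to the inclusion $\rho_\infty(\pi_1(S,p)) \subset G$, degree one is precisely the assertion that the algebraic and geometric limits coincide, $G = \rho_\infty(\pi_1(S,p))$, which is strong convergence along our chosen subsequence. The full sequence then converges strongly by the subsequence argument already set up: by the resolution of the Ending Lamination Conjecture the manifold $N_\infty$, and hence $\rho_\infty$ up to conjugacy, is determined by its end invariants, so all subsequential algebraic limits agree, while what we are about to prove forces every subsequential geometric limit to equal this common algebraic limit.

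To prove $\deg \pi = 1$, I would first record the global geometry of $N_\infty \cong S \times \BR$: it decomposes as $E' \cup C \cup U_\CE$, where $E'$ is the neighborhood of the convex-cocompact end on which $\pi$ is an embedding (by the earlier Claim), $C$ is a compact core with $\pi(C)$ bounded, and $U_\CE$ is a neighborhood of the degenerate end $\CE$. Recall that $U_\CE$ may be taken inside $CC(N_\infty)$, whereas points of $E'$ far out the end lie at arbitrarily large distance from $CC(N_\infty)$. The key geometric input is that $\pi$ maps convex cores into convex cores: since $\rho_\infty(\pi_1(S,p)) \subset G$ we have a containment of limit sets $\Lambda(\rho_\infty(\pi_1 S)) \subseteq \Lambda(G)$, hence of their convex hulls, and passing to quotients gives $\pi(CC(N_\infty)) \subseteq CC(N_G)$. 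As $\pi$ is a local isometry it is distance non-increasing, so $d_{N_G}(x, CC(N_G)) \le d_{N_\infty}(\tilde x, CC(N_\infty))$ for every $x \in N_G$ and every preimage $\tilde x \in \pi^{-1}(x)$.

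Now I would count preimages of a point $x$ lying deep in the convex-cocompact end $E$ of $N_G$, at distance $R$ from $CC(N_G)$ with $R$ large. By the displayed inequality each preimage $\tilde x$ lies at distance at least $R$ from $CC(N_\infty)$; since $U_\CE \subset CC(N_\infty)$ and $\pi(C)$ is bounded, for $R$ large every such $\tilde x$ must lie in $E'$. But $\pi|_{E'}$ is injective, so $x$ has exactly one preimage, whence $\deg \pi = 1$ and $G = \rho_\infty(\pi_1(S,p))$, i.e.\ strong convergence.

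The main obstacle is precisely ruling out \emph{extra sheets} of the covering near the convex-cocompact end: a priori $\pi$ could even have infinite degree, and one must prevent preimages of a deep point of $E$ from accumulating elsewhere in $N_\infty$. This is exactly what the convex-core nesting $\pi(CC(N_\infty)) \subseteq CC(N_G)$ controls, since it forces all such preimages to escape out the unique convex-cocompact end of $N_\infty$, where injectivity of $\pi|_{E'}$ closes the count. One could alternatively invoke the Covering Theorem~\ref{thecoveringtheorem} to see directly that $\CE$ covers a degenerate end $\CE'$ of $N_G$ whose neighborhood lies in $CC(N_G)$, giving the same control over where $U_\CE$ maps.
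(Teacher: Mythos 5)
Your proof is correct, and it follows the same counting skeleton as the paper's --- pick a point $x$ deep in the convex-cocompact end of $N_G$, show every preimage of $x$ must lie in $E'$, and use the injectivity of $\pi|_{E'}$ to conclude $|\pi^{-1}(x)|=1$, so the covering is trivial --- but the mechanism you use to expel preimages from $N_\infty \setminus E'$ is genuinely different. The paper argues via injectivity radius: by Thurston--Bonahon the injectivity radius of $N_\infty$ is bounded above inside $CC(N_\infty)$, hence by some $K$ on all of $N_\infty\setminus E' = CC(N_\infty)\cup(\text{compact})$, and since $\inj(\tilde x, N_\infty)\geq \inj(\pi(\tilde x), N_G)$ for a Riemannian covering, any $x$ with $\inj(x,N_G)>K$ has no preimage outside $E'$. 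You instead use distance to the convex core: the inclusion $\rho_\infty(\pi_1(S,p))\subset G$ gives $\Lambda(\rho_\infty(\pi_1 S))\subseteq\Lambda(G)$, hence $\pi(CC(N_\infty))\subseteq CC(N_G)$, and since paths project with the same length, $d_{N_G}(x,CC(N_G))\leq d_{N_\infty}(\tilde x, CC(N_\infty))$ for every preimage $\tilde x$; combined with $U_\CE\subset CC(N_\infty)$ and compactness of the core, this forces all preimages of a sufficiently deep point into $E'$. Your route is somewhat more elementary, in that it replaces the bounded-injectivity-radius theorem with the purely formal nesting of limit sets and convex cores, while the paper's version needs only a single geometric bound $K$ and no convex-core comparison. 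Both arguments rest on exactly the same two earlier ingredients --- that $\pi|_{E'}$ is an embedding and that the degenerate end has a neighborhood contained in $CC(N_\infty)$ --- and your reduction of strong convergence of the full sequence to the subsequence statement via the Ending Lamination Theorem matches the paper's setup, as does your remark that the Covering Theorem gives an alternative control on where the degenerate end maps.
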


\begin{proof}
We need to show that the covering $\pi :N_\infty \to N_G $ is trivial.  There is a neighborhood of the degenerate end of $N_\infty $ that is completely contained in the convex core $CC (N_\infty ) $.  So, $N_\infty \setminus E' $ is the union of $CC (N_\infty ) $ and some compact set.  A result of Thurston \cite{KT} and Bonahon \cite{Bonahon} implies that the injectivity radius of $N_\infty $ is bounded above inside of its convex core, so by extension there is an upper bound $K $ for the injectivity radius of $N_\infty $ outside of $E' $.  Pick a point $x \in N_G $ deep enough inside its convex-cocompact end so that $\inj (x,N_G) > K $.  Then no element of $N_\infty \setminus E' $ can project to $x $.  Since $\pi $ is injective on $E' $, $ |\pi^{ -1} (x) | = 1 $.  Therefore $\pi $ is trivial.
\end{proof}

This finishes the proof of Proposition \ref {characterization}.

\vspace {2mm} \noindent {\bf Restricting the Markings:} \rm Finally, we show that we can restrict our representations $(\rho_i) $ to a subgroup of $\pi_1 (S, p) $ to create a sequence with the properties desired for our example.  

\begin{claim}
After passing to a subsequence, there exists an index $2 $ subgroup $\Gamma \subset \pi_1 (S, p) $ such that for each $i > 0 $, $$(\sigma_i)_* |_\Gamma: \Gamma \to \pi_1 (N_i,\sigma_i (p)) $$ is a surjection.
\end{claim}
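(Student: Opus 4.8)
The plan is to detect index-$2$ subgroups cohomologically with $\BF_2$-coefficients and then exploit the finiteness of $H^1(S;\BF_2)$. An index-$2$ subgroup $\Gamma\subset\pi_1(S,p)$ is exactly the kernel of a nonzero homomorphism $\phi\colon\pi_1(S,p)\to\BF_2$, that is, of a nonzero class in $H^1(S;\BF_2)\cong\Hom(\pi_1(S,p),\BF_2)$, and every index-$2$ subgroup arises this way. Since each $N_i$ is homeomorphic to the interior of the handlebody $H$, its fundamental group $\pi_1(N_i)\cong F_g$ is free of rank $g$, and by construction $(\sigma_i)_*\colon\pi_1(S,p)\to\pi_1(N_i)$ is surjective.

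First I would record a linear-algebra criterion for surjectivity of the restriction. Fix a nonzero $\phi$ with $\Gamma=\Ker\phi$. Because $(\sigma_i)_*$ is onto, the image $(\sigma_i)_*(\Gamma)$ has index $1$ or $2$ in $\pi_1(N_i)$, and it has index $2$ precisely when there is a surjection $\psi\colon\pi_1(N_i)\to\BF_2$ with $\phi=\psi\circ(\sigma_i)_*$; this is the statement that $\phi$ factors through $(\sigma_i)_*$. Phrased cohomologically, $(\sigma_i)_*\vert_\Gamma$ is surjective if and only if $\phi\notin V_i$, where
$$V_i:=\mathrm{Im}\bigl((\sigma_i)^*\colon H^1(N_i;\BF_2)\to H^1(S;\BF_2)\bigr).$$
As $(\sigma_i)_*$ is surjective, $(\sigma_i)^*$ is injective, so $V_i$ is an $\BF_2$-subspace of dimension $g$ inside $H^1(S;\BF_2)\cong\BF_2^{\,2g}$.

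The key observation is now that $H^1(S;\BF_2)$ is a \emph{finite} vector space, so among all $V_i$, $i\in\BN$, only finitely many distinct subspaces occur. Passing to a subsequence I may therefore assume $V_i\equiv V$ for a single subspace $V$ with $\dim_{\BF_2}V=g<2g=\dim_{\BF_2}H^1(S;\BF_2)$. Since $V$ is proper, I can choose a class $\phi\in H^1(S;\BF_2)\setminus V$, and such a $\phi$ is automatically nonzero because $0\in V$. Setting $\Gamma:=\Ker\phi$ then gives an index-$2$ subgroup with $\phi\notin V_i$, hence $(\sigma_i)_*\vert_\Gamma$ surjective, for every $i$ in the subsequence.

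The one point that genuinely requires care — and the reason the statement only claims a subsequence — is that a single $\phi$ must avoid all of the spaces $V_i$ simultaneously. One cannot hope to do this along the full sequence: already in genus $1$ the three distinct lines in $\BF_2^{\,2}$ cover the entire plane, so if the $V_i$ ran through all of them no admissible $\phi$ could exist. The finiteness of mod-$2$ cohomology is exactly what converts this a priori infinite family of constraints into finitely many, which is what makes the passage to a subsequence legitimate and the choice of $\phi$ possible.
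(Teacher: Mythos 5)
Your argument is correct, and while it shares the paper's overall skeleton (find good index-$2$ subgroups for each $i$, then use finiteness of mod-$2$ data to stabilize along a subsequence), the key mechanism is genuinely different. The paper produces an explicit good subgroup for each $i$ from the handlebody structure of $N_i$: the kernel of the map sending a loop in $\partial H$ to its mod-$2$ intersection number with a longitude in a standard meridian--longitude basis is an index-$2$ subgroup of $\pi_1(\partial H)$ containing all longitudes, hence surjecting onto $\pi_1(H)$; transporting this through the marking $h_i\circ f^i$ gives subgroups $\Gamma_i\subset\pi_1(S,p)$, and since $\pi_1(S,p)$ has only finitely many index-$2$ subgroups, a subsequence makes $\Gamma_i$ constant. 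You instead characterize \emph{exactly} which index-$2$ subgroups work: $\Ker\phi$ surjects if and only if $\phi\notin V_i=\mathrm{Im}\bigl((\sigma_i)^*\bigr)$, and the dimension count $\dim_{\BF_2}V_i=g<2g$ (valid because $(\sigma_i)_*$ is onto, so $(\sigma_i)^*$ is injective, and $\pi_1(N_i)$ is free of rank $g$) guarantees a class outside the stabilized subspace $V$. The two finiteness steps are literally the same, since index-$2$ subgroups of $\pi_1(S,p)$ biject with nonzero classes in the finite space $H^1(S;\BF_2)\cong\BF_2^{2g}$; but your linear-algebra criterion buys two things the paper's construction does not. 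It uses nothing about $N_i$ beyond $\pi_1$-surjectivity of $\sigma_i$ onto a free group of rank $g$ (no meridian--longitude bases or intersection pairings), and it makes transparent why the passage to a subsequence is unavoidable: half-dimensional subspaces of $\BF_2^{2g}$ can jointly cover all nonzero classes, as your genus-$1$ illustration shows. What the paper's explicit route buys in exchange is a concrete identification of the subgroup --- indeed, in your language its observation amounts to exhibiting one specific $\phi$ outside the fixed handlebody subspace $\iota^*H^1(H;\BF_2)$, of which each $V_i$ is the pullback under $f^i$.
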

\begin{proof} 
The proof consists of two simple observations.  First, if $M$ is a handlebody then there is an index $2 $ subgroup $\Gamma \subset \pi_1 (\partial M) $ that surjects onto $\pi_1 (M) $.  One can take, for example, the kernel of the map taking an element of $\pi_1 (\partial M) $ to its mod-$2 $ intersection number with any longitude in a standard meridian-longitude basis for $H_1 (\partial M) $.  Therefore, we can construct for each $i $ an index $2 $ subgroup $\Gamma_i $ with $(\sigma_i)_* : \Gamma_i \to \pi_1 (N_i ,\sigma_i (p)) $ a surjection.  Since there are only finitely many index $2 $ subgroups of $\pi_1 (S, p) $, we can pass to a subsequence so that a single $\Gamma \subset \pi_1 (S, p) $ works for all $i $. 
\end{proof}

Consider now the sequence of representations $\rho_i |_\Gamma : \Gamma \to \PSL (2,\BC) $.  The claim implies that $\rho_i (\Gamma) = \rho_i ( \pi_1 (S, p)) $, so $$\rho_i (\Gamma) \to \rho_\infty (\pi_1 (S, p)) $$ geometrically.  Now $\rho_i \to \rho_\infty $ algebraically, so $\rho_i |_\Gamma \to \rho_\infty |_\Gamma $.  Therefore, since $\rho_\infty $ is faithful, $\rho_\infty (\Gamma) $ has index $2 $ in $\rho_\infty (\pi_1 (S, p)) $.  We have therefore provided a sequence of representations converging algebraically to a faithful representation whose image is an index $2$ subgroup of the geometric limit, and so that the geometric limit has no parabolics.  This concludes our example.

\vspace {2mm} \noindent {\bf An Alternate Method:} \rm The difficult part of the example above is constructing a sequence of hyperbolic $3$-manifolds $(N_i) $, each homeomorphic to the interior of a genus $g $ handlebody, that converges geometrically to a hyperbolic $3$-manifold $N_\infty $ homeomorphic to $\Sigma_g \times \BR $.  Such a sequence can be assembled differently by working backwards from the limit.  

Fix a hyperbolic $3$-manifold $N_\infty \cong \Sigma_g \times \BR $ with no cusps, one degenerate end and one convex-cocompact end.  Such manifolds exist, for instance, by \cite [Theorem 3.7] {McMullenrenormalization}.  If we use that construction then a theorem of McMullen \cite {McMullen-cusps} shows that $N_\infty $ is an algebraic limit of one-sided maximal cusps.  (In fact this is true no matter how $N_\infty $ is constructed, but that relies on the recent resolution of the Density Conjecture for surface groups \cite {ELC2}). This means that there is a sequence of marked hyperbolic $3$-manifolds $ M_{i} \cong \Sigma_g \times \BR $ converging algebraically to $N_\infty $, where each $M_i $ has one convex cocompact end and one maximally cusped end.  

As $N_\infty $ has no cusps, Theorem \ref{AC} implies that there are base points $p_i \in M_i $ so that $(M_i, p_i) \to (N_\infty, p_\infty) $ geometrically.  Furthermore, $p_i $ can be chosen to lie on the component of $\partial CC (M_{ i } ) $ facing the convex-cocompact end of $M_i $.  Note that since the thrice-punctured sphere components of $\partial CC (M_{i }) $ disappear in the geometric limit, their distances to $p_i $ grow without bound.

\begin{figure}[t]
\label{gluingfigure}
\includegraphics{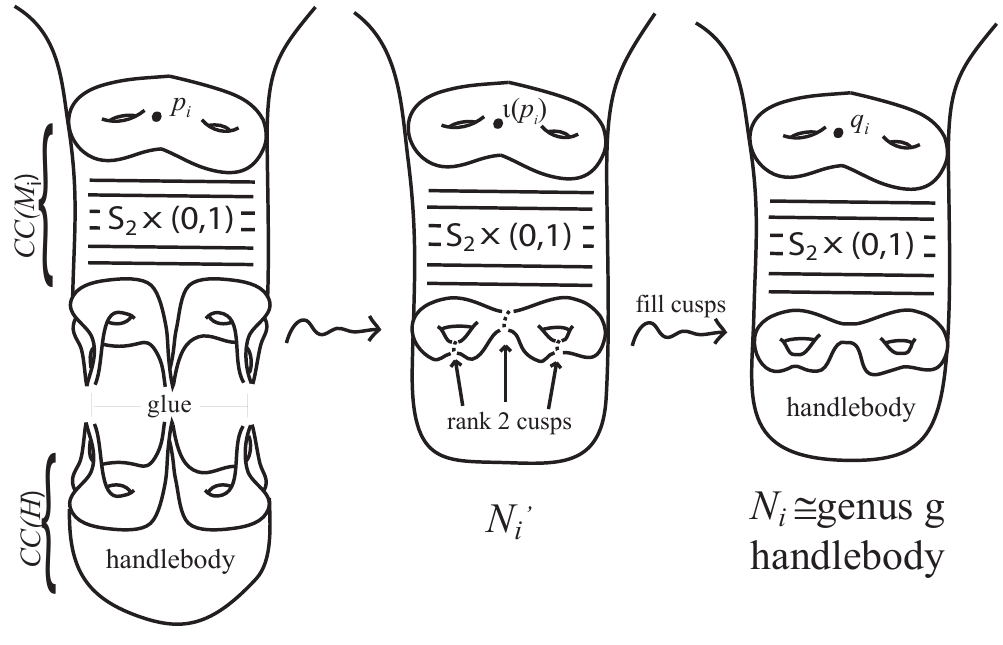}
\caption{Producing long handlebodies by gluing maximal cusps.}
\end{figure}

Passing to a subsequence, we may assume that the pinched pants decompositions on the maximally cusped ends of $M_{i } $ all have the same topological type.  In other words, we may choose a pants decomposition $P \subset \Sigma_g $ and homeomorphisms $$ { M }_{i }\ \partial_c M_i \cong (\Sigma_g \times [0,1]) \setminus (P \times \{0\}) $$ for all $i $.  Pick an identification of $\Sigma_g $ with the boundary of a genus $g $ handlebody $H $ so that $P $ is a pinchable collection of curves on $\partial H $; the latter condition can be ensured, for instance, by composing any fixed identification with a high power of a pseudo-anosov homeomorphism of $\partial H $ whose attracting lamination lies in the Masur domain.  This allows us to endow $H $ with a geometrically finite hyperbolic metric in which $P $ has been pinched.  Then both $\partial CC (H) $ and the bottom boundary components of $CC (M_{ i } ) $ are identified with $\Sigma_g \setminus P $, so we may use Lemma \ref{gluingcusps} to glue them together.  This produces a sequence of hyperbolic $3$-manifolds $N_i' $ equipped with isometric embeddings of $CC (H)$ and the subset $K_i \subset M_{i } $ that is the union of the convex core of $M_{i } $ and its convex-cocompact end; we will denote the inclusion of the latter by $\iota_i : K_i \to N_i' $.  Since the frontier of $K_i $ in $M_{i } $ consists of the thrice-punctured sphere components of $\partial CC (M_{i }) $, its distance to the base point $p_i \in M_{i } $ tends to infinity with $i $.  The same holds for the distances from $\iota_i (p_i) $ to the frontier of $\iota_i ( K_i ) \subset N_i' $.  Therefore, the sequence of based manifolds $(N_i',\iota_i (p_i)) $ converges geometrically to the same limit, $ (N_\infty, p_\infty) $, as our original sequence.  

Observe that $N_i' $ is homeomorphic to the manifold obtained from $H $ by pushing $P \subset \partial H $ into the interior of $H $ and then drilling it out.  The curves in $P $ correspond to rank 2 cusps of $N_i' $, so we may choose $(x,y) $-coordinates for the Dehn filling space of each cusp so that $(1,0) $ corresponds to filling a curve that is contractible in $H $ and $(0,1) $ represents filling a curve homotopic into $P $.  Then the manifold $N_{i, n }' $ obtained from $(1,n) $-Dehn filling  each cusp of $N_i' $ is homeomorphic to $H $ (compare with \cite[Section 3]{KT}).  If $n $ is large, an extension of Thurston's Dehn filling theorem due to to Bonahon-Otal \cite{Bonahon-Otal} and Comar \cite{Comar} implies that $N_{i, n }' $ admits a unique hyperbolic structure with the same conformal boundary as $N_i' $.  Furthermore, there are base points $ p_{i, n } \in N_{i, n }' $ such that $ (N_{i, n }' , p_{i, n })\to (N_i',\iota_i (p_i)) $ geometrically.  An appropriate sequence $(n_i) $ can then be chosen so that $ (N_{i, n_i }', p_{i, n_i }) $ converges geometrically to $ (N_\infty, p_\infty) $.  Setting $N_i = N_{i, n_i }' $ and $q_i = p_{i, n_i} $ finishes our work.\qed

\begin{named}{Example \ref{ex2}}
Let $\Gamma \cong \pi_1 (\Sigma_3) \star \BZ$ be the fundamental group of a compression body with exterior boundary of genus $4$ and connected interior boundary of genus $3$. There is a sequence $(\rho_i)$ in $\calD (\Gamma) $ converging algebraically to $\rho$ and geometrically to $G$ such that:
\begin{itemize}
\item $G$ does not contain any parabolic elements.
\item $\rho(\Gamma)$ has infinite index in $G$.
\end{itemize} 
\end{named}

In the following, $\hat \Gamma \cong \pi_1 (\Sigma_2) \star \BZ $ will be the fundamental group of a compression body with genus $3 $ exterior boundary and genus $2 $ interior boundary.  The group $\Gamma $ will be the subgroup generated by the fundamental group of a double cover of the interior boundary and a loop going around the remaining handle.

The representations here are constructed from those in the previous example by using Klein-Maskit combination:

\begin{named}{The Combination Theorem}[see \cite{Japaner}]
Let $G_1 $ and $G_2$ be two discrete and torsion free subgroups of $\PSL_2\BC $.  Suppose that there exist fundamental domains $D_i \subset \Omega (G_i) $ for $G_i $, each containing the exterior of the other.  Then $G =\left <G_1,G_2\right >$ is discrete, torsion free and is isomorphic to $G_1 \star G_2 $.  Moreover, if the groups $G_i $ do not contain parabolics then the same is true for $G $.
\end{named}

\begin{figure}

\centering
\includegraphics{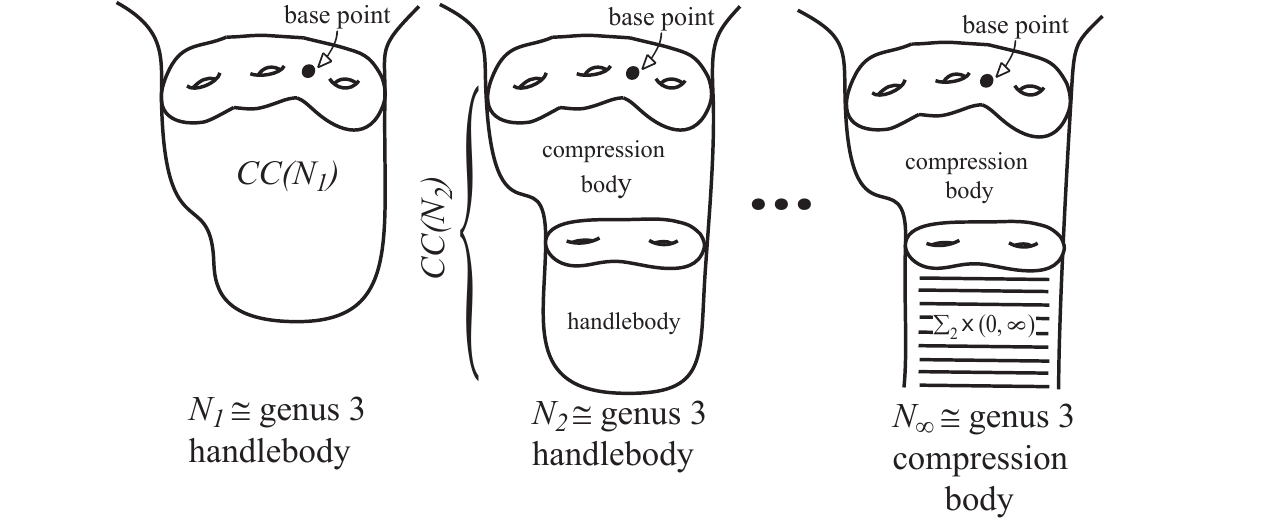}
\caption{The geometric convergence $N_i\to N_\infty$.\label{characterization2}}  
\end{figure}

Let us recall the players from Example \ref {ex1}.  For clarity, we will append apostrophes to all letters that represent objects from the first example.  So, $ (\rho_i') $ is a sequence of representations in $\calD (\pi_1 (\Sigma_2)) $ converging strongly to a faithful representation $\rho_\infty' $ without parabolics, and $\Gamma' \subset \pi_1 (\Sigma_2) $ is an index $2 $ subgroup with $\rho_i (\Gamma') = \rho_i (\pi_1 (\Sigma_2) )$ for all $i $. For convenience, set $G_i '= \rho_i (\pi_1 (\Sigma_2)) $ and $G_\infty' = \rho_\infty (\pi_1 (\Sigma_2)) $.

Our previous analysis implies that the convex cores of $\Hyp^3 / G_i' $ converge to the convex core of $\Hyp^3 / G_\infty '$ in the Gromov-Hausdorff topology, so the limit sets $\Lambda (G_i') $ converge to $\Lambda (G_\infty') $ in the Hausdorff topology.  Since the domains of discontinuity $\Omega (G_i') $ and $\Omega (G_\infty')$ are all connected and nonempty, it follows that there are fundamental domains $D_i $ for $G_i ' \actson \Omega (G_i') $ converging in the Hausdorff topology to a fundamental domain $D_\infty $ for $G_\infty' \actson \Omega (G_\infty') $. 

 Pick some loxodromic element $\alpha \in \PSL (2,\BC) $ with fixed points contained in the interior of $D_\infty $ (which is connected).  Moreover, assume that its translation distance is large enough so that there is a fundamental set for $\Omega (\left<\alpha \right>) $ whose complement is entirely contained within $D_\infty $.  After discarding a finite number of terms, its complement will also be contained in $D_i $ for all $i $. We now construct new representations $$ \rho_i: \hat\Gamma \longrightarrow \PSL (2,\BC), \text { where } \hat \Gamma :=\pi_1 (\Sigma_2) \star \BZ $$ from $\rho_i '$ by sending $1 \in \BZ $ to $\alpha $..If $G_i \subset \PSL (2,\BC) $ is the image of $\rho_i $, the Klein-Maskit combination theorem implies that $G_i = G_i '\star \left <\alpha \right >$ and is discrete, torsion free and has no parabolics.     The same statements apply when $i = \infty $, showing that that $\rho_\infty$ is faithful with image $$G_\infty = G_\infty' \star \left <\alpha\right >. $$
The manifolds $N_i= \Hyp^ 3 / G_i , \ i = 1, 2,\ldots $ are all convex-cocompact and homeomorphic to the interior of a genus $3 $ handlebody.  The manifold $ N_\infty  = \Hyp^3 / G_\infty $ is homeomorphic to a compression body with genus $3 $ exterior boundary and connected, genus $2 $ interior boundary.

 It follows, for instance, from the argument in \cite[Prop 10.2]{AC1} that $\rho_i \to \rho_\infty $ strongly.
  Consider the subgroup $$\Gamma := \Gamma' \star \BZ \  \ \subset \ \  \pi_1 (\Sigma_2) \star \BZ=: \hat \Gamma. $$ Then $\rho_i |_{\Gamma} $ converges algebraically to $\rho_\infty |_{\Gamma } $. Since $\rho_i' (\Gamma') = \rho_i' (\pi_1 (\Sigma_2)) $ for all $i \in \BN $,  we also have that
$$\rho_i (\Gamma) = \rho_i (\hat \Gamma) \text { for all } i \in \BN.$$ So, $\rho_i (\Gamma)$ converges geometrically to $\rho_\infty (\hat\Gamma) =G_\infty $.  Since $\rho_\infty $ is faithful and $[\hat\Gamma : \Gamma] =\infty $, it follows that $\rho_\infty (\Gamma) $ has infinite index in $\rho_\infty (\hat\Gamma) =G_\infty $.  Therefore $\rho_i : \Gamma \to \PSL (2,\BC) $ is a sequence of representations for which the algebraic limit has infinite index in the geometric limit.  As mentioned above, the geometric limit $G_\infty$ has no parabolics.  Since $\BH^ 3 / \rho_\infty (\Gamma)$ is homeomorphic to a compression body with exterior boundary of genus $4 $ and connected interior boundary of genus $3 $, we have provided the desired example.
\vspace{2mm}

\begin{named}{Example \ref{ex3}}
Let $\Gamma$ be the fundamental group of a compression body with exterior boundary of genus $4$ and connected interior boundary of genus $3$.  There is a sequence $(\rho_i)$ in $\calD (\Gamma) $  converging algebraically to a representation $\rho$ and geometrically to a group $G$ such that:
\begin{itemize}
\item $\rho(\Gamma)$ does not contain any parabolic elements.
\item $G$ is infinitely generated.
\end{itemize} 
\end{named}

In \cite{Thu86B}, Thurston exhibited a sequence of representations of a closed surface group into $\PSL (2,\BC) $ converging geometrically to a group that is not finitely generated.  However, the algebraic limit of these representations contains parabolic elements.  The idea here is to attach pieces of the handlebodies in Example \ref{ex2} to the manifolds in his sequence so that the parabolics are hidden outside the algebraic limit. 

To facilitate such a combination, we must build a variant of Example \ref{ex2} in which each of the handlebodies in the sequence is maximally cusped.  Let $M$ be a compression body with genus $3 $ exterior boundary $S_E $ and connected, genus $2 $ interior boundary $S_I $.  Assume that $P_E \subset S_E $ is a pants decomposition consisting of curves in the Masur domain of $M $.

\begin{claim}  There is a sequence of maximally cusped pointed hyperbolic $3$-manifolds $ ( N_i ) $, each homeomorphic to the interior of a genus $3 $ handlebody, that converges geometrically to a hyperbolic $3$-manifold $ N_\infty $ homeomorphic to the interior of $M $ in which $P_E $ has been pinched.  Moreover, the subsets $ CC (N_i) \subset N_i $ converge to $CC (N_\infty) $ geometrically.  
\end{claim}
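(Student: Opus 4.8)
The plan is to run the construction of Example \ref{ex2} with its convex-cocompact handlebodies replaced by maximally cusped ones, using the gluing-and-filling technology of the \emph{alternate method} of Example \ref{ex1}. There are three tasks: to build the limit $N_\infty$, to build the approximating maximally cusped handlebodies $N_i$, and to establish geometric convergence $N_i \to N_\infty$ together with convergence of the convex cores.

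First I would produce $N_\infty$. Since $P_E$ lies in the Masur domain of $M$, the discussion of pinchable collections in Section \ref{maximalcusps}, together with Thurston's hyperbolization, shows that $P_E$ is pinchable; hence the interior of $M$ carries a geometrically finite structure in which $P_E$ is pinched and the genus $2$ interior boundary $S_I$ is convex-cocompact. I would then degenerate the interior end exactly as in Example \ref{ex1}: fixing a generic pseudo-Anosov $\psi$ of $S_I$ and iterating it on the conformal structure of the interior boundary produces a sequence of such geometrically finite structures whose interior ends limit to a degenerate end, while the cusps along $P_E$, being untouched by $\psi$, persist. The resulting limit is $N_\infty$, homeomorphic to the interior of $M$ with $P_E$ pinched, having a maximally cusped exterior end and a degenerate genus $2$ interior end, so that $\partial CC(N_\infty)$ consists of the thrice-punctured spheres determined by $P_E$.

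Next I would realize $N_\infty$ as a geometric limit of maximally cusped genus $3$ handlebodies. Mirroring Example \ref{ex2}, I would combine a maximally cusped version of the genus $2$ handlebody sequence of Example \ref{ex1} with the additional handle, using Ahlfors--Bers (Theorem \ref{AhlforsBers}), the pinchability furnished by the Masur domain hypothesis, and the gluing of maximal cusps of Lemma \ref{gluingcusps}. Where the alternate method of Example \ref{ex1} introduces rank-two cusps I would remove them by the Dehn filling theorem of Bonahon--Otal \cite{Bonahon-Otal} and Comar \cite{Comar}, while leaving intact the rank-one cusps that are to become the cusps of $N_\infty$; these survivors are what make each $N_i$ maximally cusped. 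The base points would be placed near these surviving cusps, so that they persist in the limit rather than escaping to infinity.

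The remaining task --- and the step I expect to be the main obstacle --- is to verify that $N_i \to N_\infty$ and $CC(N_i) \to CC(N_\infty)$ geometrically. Near the base point the geometry is literally that of a fixed, maximally cusped piece, so it converges trivially; away from it, the degeneration of the interior is governed by Theorem \ref{AC} applied to the \emph{cuspless} surface-group limit that controls that region. The delicate point is exactly the confinement making this dichotomy possible: one must force the cusps of $P_E$ to persist in a geometrically \emph{infinite} limit --- rather than escaping, as they do in the alternate method of Example \ref{ex1} --- while the genus $2$ interior end simultaneously becomes degenerate, and all of this with no version of Theorem \ref{AC} that permits parabolics in the limit. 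The device that resolves it is to route every persistent parabolic through a fixed glued piece, whose cusp neighborhoods are isometric for all $i$ and hence converge automatically, and to route all the genuine degeneration through a cuspless piece where Theorem \ref{AC} applies. Keeping track of the convex cores throughout then gives $CC(N_i) \to CC(N_\infty)$.
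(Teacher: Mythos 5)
Your steps 1 and 2 are broadly parallel to the paper's construction (pinch $P_E$ plus data on the interior boundary; cap off and Dehn fill using Lemma \ref{gluingcusps} and Bonahon--Otal/Comar), but the heart of your step 3 rests on a false premise and on a device that cannot be implemented. You assert that no version of Theorem \ref{AC} permitting parabolics in the limit is available; in fact the paper's proof of this claim runs through exactly such a version. The paper pinches $P_E\cup f^i(P_I)$ to obtain manifolds $M_i\in AH(M,P_E)$, invokes compactness of $AH(M,P_E)$ (Thurston \cite[Theorem 7.1]{Thu86A}, which applies because the Masur-domain hypothesis makes the pared manifold $(M,P_E)$ acylindrical --- a point your step 1 also needs and never supplies), and then uses continuity of the length function \cite{length} to show $\length_{N_\infty}(\lambda)=0$, so that the interior end of the limit is degenerate with ending lamination $\lambda$ and carries \emph{no} cusps. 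Hence the only parabolics in the limit correspond to $P_E$, which are pinched in every $M_i$, the convergence is weakly type preserving, and strong convergence follows from the Anderson--Canary/Evans extension of Theorem \ref{AC} (stated later as Theorem \ref{Evans}). That is the actual mechanism by which the $P_E$ cusps persist: type-preserving strong convergence, not isometric rigidity of a fixed piece.

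Your substitute device --- ``route every persistent parabolic through a fixed glued piece, whose cusp neighborhoods are isometric for all $i$'' --- would fail. The $P_E$ cusps are rank-one cusps of the exterior end, and any fixed piece containing them would have to be attached to the degenerating part along thrice-punctured spheres as in Lemma \ref{gluingcusps}; that gluing creates rank-two cusps at bounded distance from the base point, which would then survive into the geometric limit. But $N_\infty$ has no cusps other than $P_E$ (its interior end is degenerate and cuspless --- precisely the fact requiring proof), so no such decomposition of $N_i$ can exist. In the paper's construction the neighborhoods of the $P_E$ cusps genuinely vary with $i$ and merely converge; the trivially true observation that all rank-one cusps have isometric standard horoball quotients gives nothing toward geometric convergence of the manifolds, since the issue is how the cusps sit inside the varying convex cores. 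Two secondary problems: a Klein--Maskit combination ``with the additional handle'' as in Example \ref{ex2} cannot maximally cusp the genus $3$ boundary along the prescribed $P_E$ (a genus $2$ pants decomposition contributes only $3$ of the needed $6$ curves, none crossing the handle) --- the paper instead caps the maximally cusped \emph{interior} end of $M_i$ with the convex core of a fixed maximally cusped genus $2$ handlebody and fills the resulting rank-two cusps; and your final sentence elides the paper's argument that $\partial CC(N_i)$ and $\partial CC(N_\infty)$ both consist of four thrice-punctured spheres, which is what upgrades convergence of components to $CC(N_i)\to CC(N_\infty)$.
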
  

\begin{proof} The sequence $(N_i) $ will be constructed in two steps.  First, we will produce a sequence of hyperbolic $3$-manifolds $ M_i $ homeomorphic to the interior of $M$ in which both ends are maximally cusped.  The sequence will converge geometrically to the manifold $ N_\infty $ referenced in the statement of the claim.  We will then use the same gluing trick exploited in Example \ref{ex1} to cap off the interior ends of each $M_i $ without changing the sequence's geometric limit, thus producing the desired sequence of handlebodies $ (N_i) $.  

Fix pants decompositions $P_E $ and $ P_I $ for the boundary components of $M$, and assume that every curve in $ P_E $ lies in the Masur domain of $M $.  If we choose a pseudo-anosov diffeomorphism $f : S_I \to S_I $, then for each $i $ we have a new pants decompositions $f^i (P_I) $ for $S_I $.
It is not hard to check that for each $i $, $P_E \cup f^i ( P_I) $ is a pinchable collection of curves on $\partial M $ (see Section \ref{maximalcusps}).  So, there is a sequence of marked hyperbolic $3$-manifolds $M_i \in AH (M) $ in which $P_E \cup f^i (P_I) $ have been pinched.  Note that in fact $M_i $ lies in the deformation space $AH(M, P_E) $ of hyperbolic structures on the interior of $M$ in which the curves of $P_E $ represent parabolics.  Since $P_E $ consists of curves lying in the Masur domain, the pared manifold $(M, P_E) $ has incompressible and acylindrical boundary.  It follows from a theorem of Thurston, \cite[Theorem 7.1]{Thu86A}, that $AH (M,P_E) $ is compact.  So after passing to a subsequence, we may assume that $ (M_i) $ converges algebraically to some $ N_\infty \in AH (M, P_E) $.

We claim that the only parabolic loops in $ N_\infty $ are those that are freely homotopic into $P_E $.  The end of $ N_\infty $ facing $S_E $ is maximally cusped by $P_E $, so there is no room for additional cusps there.  It therefore suffices to show that the end facing $S_I $ has no cusps.  Work of Thurston, Bonahon and Brock, implies that there is a continuous map 
$$\length : AH (M) \times \CM \CL (S_I)\to\BR $$ 
that extends the function that assigns to an element $ N \in AH (M) $ and a simple closed curve $\gamma \in S_I $ the shortest length of a curve in $ N $ homotopic to $\gamma $ (see \cite{length}). Since $\length_{M_i } (f^i (P_I)) = 0 $ for all $i $, in the limit we have $\length_{M_\infty } (\lambda) = 0 $, where $\lambda $ is the attracting lamination of $f $.  This implies that $\lambda $ cannot be geodesically realized by a pleated surface in $ N_\infty $ homotopic to $S_I $.  Let $\hat { N }_\infty $ be the cover of $ N_\infty $ corresponding to $\pi_1 (S_I ) $.  The end of $ N_\infty $ facing $S_I $ lifts homeomorphically to an end $\CE $ of $\hat {N}_\infty $.  The other end of $\hat {N}_\infty $ has no cusps and is therefore convex-cocompact by the Tameness Theorem and Thurston's covering theorem.  Since $\lambda $ is filling and unrealizable in $\hat {N}_\infty $, the argument in \cite[Theorem 6.34]{Japaner} then shows that $\CE $ is degenerate with ending lamination $\lambda $.  In particular, $\CE $ has no cusps.  Projecting down, the same is true for the end of $ N_\infty $ facing $S_I $.
 
This shows that the convergence $M_i \to N_\infty $ is type preserving, so Theorem \ref{AC} implies that the convergence is strong.  So, base frames for $M_i $ can be chosen so that the sequence converges geometrically to $ N_\infty $.  The rest of the argument follows that given at the end of Example \ref{ex1}.  Fix a genus $2 $ handlebody $H $ and choose a hyperbolic metric on its interior in which a pants decomposition $P \subset \partial H $ with the same topological type as $P_I \subset S_I $ has been pinched.  We can then create for each $i $ a hyperbolic $3$-manifold $N_i' $ by removing from $M_i $ the component of $M_i \setminus CC (M_i) $ facing $S_I $ and gluing $CC (H ) $ in its place.  As in Example \ref{ex1}, the sequence $(N_i' ) $ converges geometrically to $ N_\infty $.  Performing an appropriate Dehn filling on each $ N_i' $ yields a sequence of hyperbolic $3$-manifolds $ (N_i) $, each homeomorphic to the interior of a genus $3 $ handlebody, that also converges geometrically to $N_\infty $.  

We must show that $CC (N_i) \to CC (N_\infty ) $ geometrically.  First, every component of $\partial CC (N_\infty) $ is contained in a geometric limit of some sequence of components of $\partial CC (N_i) $.  These are all thrice-punctured spheres, however, so in fact we have that for large $i $ there are components of $\partial CC (N_i ) $ that closely approximate each component of $\partial CC (N_\infty) $.  However, $ \partial CC (N_i) $ and $ \partial CC (N_\infty) $ both have $4 $ components, so for large $i $ they must almost coincide.  From this, it is easy to check that $CC (N_i) \to CC (N_\infty) $ geometrically. \end{proof}

Fix a geometrically finite hyperbolic structure on $\Sigma_3 \times \BR $ in which both ends are maximally cusped, and let $C $ be its convex core.  Then there are pants decompositions $P_{ +, - }  \subset\Sigma_3$ so that $$C \cong (\Sigma_3\times [- 1, + 1]) \setminus (P_- \times \{- 1\} \cup P_+ \times \{+1\}) , $$ and we label the components of $\partial  C $ as positive and negative accordingly, so that $\partial C = \partial_+ C \cup \partial_- C $.  Let $E $ be the union of all components of the complement of $C $ that face its positive boundary components.  Assume that the pairs $(\Sigma_3, P_{ +, -}) $ and $(S_E, P_E) $ have the same topological type, and that every curve in $P_{ + } $ intersects some curve in $ P_- $.  

We now glue these pieces to the manifolds $N_i $ and $N_\infty $ from the previous Lemma.  To begin with, let $$N_\infty' = CC (N_\infty) \sqcup_h C \sqcup_g C \sqcup_g \cdots.  $$   Here, the gluing maps $h : \partial CC (N_\infty) \to \partial_- C $ and $g : \partial_- C \to \partial_+ C $ can be any isometries that extend to maps $ (S_E, P_E) \to (\Sigma_3, P_-) $ and $(\Sigma_3, P_+) \to (\Sigma_3, P_-) $.  This ensures that $N_\infty' $ is constructed as in Lemma \ref{gluingcusps}.  Note that the inclusion map $CC (N_\infty) \to N_\infty' $ is $\pi_1 $-injective.

 Next, for large $i $ geometric convergence and the gluing map $h $ determine an identification $h_i : \partial CC (N_i) \to \partial_- C $.  Define $$N_i' = CC (N_i) \sqcup_{ h_i } \underbrace{ C \sqcup_g \cdots \sqcup_g C }_{i \text { times } } \sqcup_{id } E.  $$
Recall that $ CC (N_i) \to CC (N_\infty) $ geometrically.  From this it follows that if $ N_i' $ is given the base frame of $N_i $ produced in the previous Lemma, then $(N_i') $ converges geometrically to $N_\infty' $.  

As in Example \ref{ex1}, performing $(1,n) $-Dehn filling on each of the cusps in $N_i' $ produces, for large $n $, a new hyperbolic manifold $ N_{i, n }' $ homeomorphic to the interior of a genus $3 $ handlebody.  Also, an appropriate diagonal sequence $ N_i^*= N_{i, n_i }' $ can be chosen to converge geometrically to $N_\infty' $.  In summary, we have proven the following claim.

\begin{figure}
\includegraphics{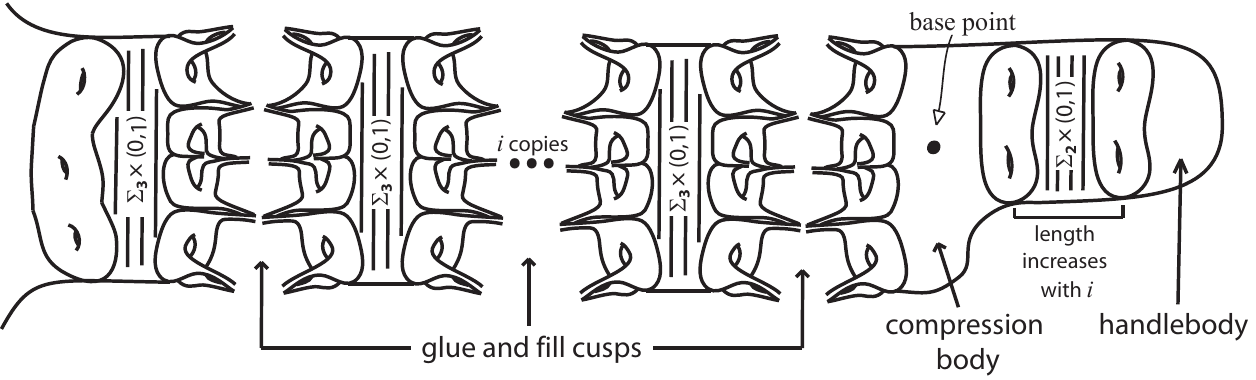}
\caption{The construction of the manifold $N_i^* $.}
\label{characterization3}
\end{figure}

\begin{claim}  There is a sequence of convex-cocompact pointed hyperbolic $3$-manifolds $N_i^* $, each homeomorphic to the interior of a genus $3 $ handlebody, that converges geometrically to a hyperbolic $3$-manifold $N_\infty' $ with infinitely generated fundamental group.
\end{claim}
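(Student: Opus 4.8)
The plan is to assemble the claim from the three ingredients constructed above --- the approximating manifolds $N_i'$, their geometric limit $N_\infty'$, and the Dehn fillings $N_{i,n}'$ --- and there are exactly three things to check. First I would verify that $\pi_1(N_\infty')$ is infinitely generated; second, that each filled manifold $N_{i,n}'$ is convex-cocompact and homeomorphic to the interior of a genus $3$ handlebody; and third, that a diagonal subsequence $N_i^*=N_{i,n_i}'$ can be extracted converging geometrically to $N_\infty'$.

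For the infinite generation of $\pi_1(N_\infty')$, I would argue by contradiction via the Tameness Theorem. By construction $N_\infty'=CC(N_\infty)\sqcup_h C\sqcup_g C\sqcup_g\cdots$ contains, for every $k$, the initial segment obtained by gluing on only $k$ copies of $C$; since all gluings take place along incompressible surfaces (the pinched $\Sigma_3$, a union of thrice-punctured spheres), van Kampen shows every such inclusion is $\pi_1$-injective and that each additional copy of $C$ enlarges the group by a genus $3$ surface group. Thus $\pi_1(N_\infty')$ is an increasing union of subgroups of unbounded rank; equivalently, $N_\infty'$ contains the infinitely many disjoint, pairwise non-parallel incompressible surfaces $\Sigma_3\times\{0\}$, one in each copy of $C$. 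Were $\pi_1(N_\infty')$ finitely generated, the Tameness Theorem would realize $N_\infty'$ as the interior of a compact $3$-manifold, which carries only boundedly much such topology; this contradiction gives the conclusion.

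The second point is local to a fixed index $i$. Each $N_i'$ is geometrically finite with finitely many rank $2$ cusps, one per pinched curve of the gluings. Choosing $(x,y)$-coordinates on each filling space so that $(1,n)$ fills a curve compressible in the ambient handlebody, the extension of Thurston's Dehn filling theorem due to Bonahon--Otal \cite{Bonahon-Otal} and Comar \cite{Comar} yields, for all large $n$, a hyperbolic structure $N_{i,n}'$ with the same conformal boundary as $N_i'$ and no cusps; being geometrically finite without cusps, it is convex-cocompact. As the filling caps off the drilled solid tori exactly as in Example \ref{ex1} (compare \cite[Section 3]{KT}), $N_{i,n}'$ is homeomorphic to the interior of a genus $3$ handlebody, and the same theorem supplies base frames with $(N_{i,n}',p_{i,n})\to(N_i',p_i)$ geometrically as $n\to\infty$.

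The last step is the diagonal argument, where I expect the real work to lie. We have two nested convergences: $N_i'\to N_\infty'$ geometrically --- which holds because the copies of $C$ are literally isometric in every $N_i'$ and in $N_\infty'$, the gluing maps agree, and $CC(N_i)\to CC(N_\infty)$ geometrically, so any compact neighborhood of the base frame sees only $CC$ and finitely many copies of $C$ and is eventually captured almost isometrically --- and, for each fixed $i$, the filling convergence $N_{i,n}'\to N_i'$ as $n\to\infty$ from the previous step. To interleave them I would fix an exhaustion $K_1\subset K_2\subset\cdots$ of $N_\infty'$ by compact sets containing the base frame, use $N_i'\to N_\infty'$ to obtain almost isometric embeddings of $K_i$ into $N_i'$ with error tending to $0$, and then, using the filling convergence for that same $i$, choose $n_i$ so large that these embeddings persist into $N_{i,n_i}'$ with an error also tending to $0$. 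Setting $N_i^*=N_{i,n_i}'$ and $q_i=p_{i,n_i}$, the composed embeddings of the $K_i$ witness $(N_i^*,q_i)\to N_\infty'$ geometrically. The main obstacle is the simultaneous control of the two error terms and the compatibility of the base frames along both limits; once this bookkeeping is arranged the claim follows, since each $N_i^*$ is a convex-cocompact genus $3$ handlebody by the second step.
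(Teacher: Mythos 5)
Your proposal follows the paper's route essentially step for step: in the paper this claim is stated as a summary of the construction immediately preceding it ($N_\infty'$ as the infinite gluing, $N_i'$ as the truncated gluings capped off by $E$, the convergence $N_i'\to N_\infty'$ deduced from $CC(N_i)\to CC(N_\infty)$, then the $(1,n)$-fillings and a diagonal extraction), and your three verifications flesh out exactly the points the paper asserts. Your first step is more detailed than the paper, which simply notes that $\pi_1(N_\infty')$ is not finitely generated; your Kneser--Haken/tameness argument works, and an even quicker route is to observe that $N_\infty'$ has infinitely many rank two cusps while a hyperbolic $3$-manifold with finitely generated fundamental group has only finitely many. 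Your third step is the same diagonal argument the paper invokes, and your justification of $N_i'\to N_\infty'$ matches the paper's.

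Your second step, however, contains a genuine problem---and making it explicit exposes a soft spot in the construction itself. The cap of $N_i'$ is $C\sqcup_{\id}E$, where $C$ is the convex core of a structure on $\Sigma_3\times\BR$ with \emph{both} ends maximally cusped and $E$ consists only of the geometrically finite flares facing $\partial_+ C$. Hence the top end of $N_i'$ is maximally cusped: the curves of $P_+$ in the last copy of $C$ remain rank one cusps, and the conformal boundary of $N_i'$ is a union of thrice-punctured spheres. The $(1,n)$-Dehn fillings of Bonahon--Otal \cite{Bonahon-Otal} and Comar \cite{Comar} apply only to the rank two (torus) cusps at the glued interfaces and cannot remove these rank one cusps, so your assertion that $N_{i,n}'$ has ``the same conformal boundary as $N_i'$ and no cusps'' is self-contradictory: a geometrically finite manifold whose conformal boundary consists of thrice-punctured spheres necessarily has cusps. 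As the construction stands, the filled manifolds are maximally cusped at the top, hence geometrically finite but not convex-cocompact, so your second step does not establish the convex-cocompactness demanded by the statement. The repair is easy and is what the alternate method of Example \ref{ex1} in effect does there: cap the stack not with $C\sqcup_{\id}E$ but with the convex core together with the geometrically finite end of a structure on $\Sigma_3\times\BR$ whose negative end is maximally cusped along $P_-$ and whose positive end is convex-cocompact; then all cusps of $N_i'$ are rank two, the fillings yield genuinely convex-cocompact handlebodies, and since the cap recedes from the base frame the geometric limit is still $N_\infty'$. (Alternatively one may weaken ``convex-cocompact'' to ``geometrically finite'' in the claim, since nothing in the remainder of Example \ref{ex3} uses convex-cocompactness of $N_i^*$.) To be fair, the paper's own text asserts convex-cocompactness with exactly the same gap, so your attempt is faithful to the paper; but a complete proof must address the rank one cusps at the top of the stack rather than declare them filled.
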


We now obtain the sequence of representations advertised in the statement of this example by marking the manifolds $N_i^* $ appropriately.  Recall that the fundamental group of the compression body $M$ splits as a free product $$\pi_1 (M) = \pi_1 (S_I) \star \left < \alpha \right > \cong \pi_1 (\Sigma_2) \star \BZ, $$  for some element $\alpha \in \pi_1(M) $.  The inclusion map $M \cong CC (N_\infty) \to N_\infty' $ is $\pi_1 $-injective, so it determines an embedding $$\rho_\infty : \pi_1 (\Sigma_2 ) \star \BZ \longrightarrow \pi_1 (N_\infty') . $$  Then $\rho_\infty $ identifies a finite generating set for $\pi_1 (\Sigma_2) \star \BZ $ with a finite set of loops in $ N_\infty' $.  For large $i $, geometric convergence provides an almost isometric embedding of these loops into $N_i^* $; therefore, there are induced homomorphisms $$\rho_i : \pi_1 (\Sigma_2) \star \BZ \longrightarrow \pi_1 (N_i^*).  $$  In fact, $\rho_i $ is surjective, and therefore is a marking of $\pi_1 (N_i^*) $.  

The sequence of marked hyperbolic manifolds $ (N_i^*,\rho_i) $ converges algebraically to the cover of $N_\infty' $ corresponding to the image of $\rho_\infty $, and converges geometrically to $N_\infty' $ (as noted above).  Note that $\pi_1 (N_\infty' ) $ is not finitely generated.  We are not quite done, however, because the algebraic limit here has cusps.  To hide the cusps, we will use the finite index trick exploited in Example \ref{ex1}, but for this to work the pants decomposition $ P_E $ used above must be chosen more carefully.

\begin{claim}
There is a pants decomposition $P_E \subset S_E $ consisting of curves $\{\gamma_1,\ldots,\gamma_n \} $ in the Masur domain with the property that no power of any $\gamma_i $ is conjugate into any subgroup of $\pi_1(M) $ of the form $\Gamma' \star \left < \alpha \right > $, where $\Gamma' < \pi_1 (S_I, p) $ has index $2 $.  
\end{claim}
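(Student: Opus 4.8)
The plan is to convert the infinitely many conditions ``no power of $\gamma_i$ is conjugate into $\Gamma'\star\langle\alpha\rangle$'' into one finite, homological criterion on the classes $g_i=[\gamma_i]\in\pi_1(M)$, and then to build a Masur-domain pants decomposition meeting it. First I would record that, since $\pi_1(S_I)$ is finitely generated, it has only finitely many index-$2$ subgroups $\Gamma_1',\dots,\Gamma_N'$, in bijection with the finitely many nonzero homomorphisms $\phi_j:\pi_1(S_I)\to\BZ/2$, $\Gamma_j'=\ker\phi_j$. Writing $\pi_1(M)=A\star B$ with $A=\pi_1(S_I)$ and $B=\langle\alpha\rangle$, the subgroup $\Gamma_j'\star\langle\alpha\rangle$ is precisely the set of reduced words all of whose $A$-syllables lie in $\Gamma_j'$, so the whole question becomes one about normal forms in a free product.

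The combinatorial heart is the following dichotomy, which I would prove using the fact that each conjugacy class in a free product has a cyclically reduced representative, unique up to cyclic permutation of its syllables. If $g_i$ is conjugate into the factor $B$, then $g_i$ itself is conjugate into $\Gamma_j'\star\langle\alpha\rangle$; if $g_i$ is conjugate into $A$, then $g_i^2$ is, since $a^2\in\Gamma_j'$ for every $a\in A$; so both of these cases are fatal and must be excluded. If instead $g_i$ is cyclically reduced of syllable length $\ge 2$, then so is every power $g_i^k$, with the same multiset of $A$-syllables, and hence $g_i^k$ is conjugate into $\Gamma_j'\star\langle\alpha\rangle$ if and only if every $A$-syllable of $g_i$ lies in $\ker\phi_j$, independently of $k$. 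Combining these, the property demanded of $\gamma_i$ is equivalent to: $g_i$ is not conjugate into a free factor, and its $A$-syllables are killed by no nonzero $\phi_j$ --- equivalently, the $\BZ/2$-homology classes of its $A$-syllables span $H_1(S_I;\BZ/2)=(\BZ/2)^4$.

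It then remains to produce a pants decomposition $P_E=\{\gamma_1,\dots,\gamma_6\}$ of $S_E$ lying in the Masur domain $O(M)\subset\PML(S_E)$ so that each $g_i$ meets this criterion. I would route each pants curve through the genus-$2$ interior so that it runs successively along four loops representing a basis of $H_1(S_I;\BZ/2)$, inserting an excursion over the handle (an $\alpha$-syllable) between consecutive loops. This presents $g_i$ as a cyclically reduced word $a_1\alpha^{\pm1}a_2\alpha^{\pm1}a_3\alpha^{\pm1}a_4\alpha^{\pm1}$ whose $A$-syllables are a basis, forcing the span and, at the same time, guaranteeing syllable length $\ge 2$ with genuine $A$-syllables, so that $g_i$ is conjugate into neither $A$ nor $B$. (As a consistency check, Masur-domain membership already forbids homotoping a curve into $S_I$, since such a curve could be pushed off the compressing disk and would miss a meridian.)

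The main obstacle is the simultaneity of this construction: I must realize all six curves \emph{disjointly}, as an honest pants decomposition, while each is homologically rich in $S_I$ and the whole collection lies in $O(M)$. Rather than drawing the six curves by hand, I expect the cleanest existence proof to be a genericity argument: for a fixed $\phi_j$ the curves all of whose $A$-syllables die under $\phi_j$ should form a nowhere-dense subset of $\PML(S_E)$ (they are, morally, those carried by the infinite cover of $M$ associated to $\Gamma_j'\star\langle\alpha\rangle$), so avoiding the finitely many such subsets leaves an open dense set; intersecting it with the open, nonempty set $O(M)$ and choosing a multicurve nearby yields a Masur-domain pants decomposition with the required property. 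The explicit winding construction above serves as a concrete fallback if this density statement proves awkward to formalize.
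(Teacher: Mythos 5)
Your algebraic reduction is correct, and it is a genuinely different (and cleaner) formulation than anything in the paper: writing $\pi_1(M)=A\star\langle\alpha\rangle$ with $A=\pi_1(S_I)$, the subgroup $\Gamma'\star\langle\alpha\rangle$ is indeed the set of normal-form words with all $A$-syllables in $\Gamma'$, and since conjugate cyclically reduced words of syllable length $\geq 2$ in a free product differ by a cyclic permutation of syllables, the condition ``some power of $\gamma$ is conjugate into $\Gamma'\star\langle\alpha\rangle$'' is independent of the power and equivalent, after excluding conjugacy into a free factor, to the $A$-syllables of $\gamma$ all dying under the corresponding homomorphism $\pi_1(S_I)\to\BZ/2$. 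Your spanning criterion in $H_1(S_I;\BZ/2)\cong(\BZ/2)^4$ is the right reformulation, and reducing to the $15$ nonzero $\phi_j$ is fine.

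The gap is that the existence half --- where the entire content of the claim lives --- is left unproven. Your genericity route rests on the assertion that, for fixed $\phi_j$, the bad curves form a nowhere-dense subset of $\PML(S_E)$; ``morally carried by the infinite cover'' is not an argument, and this is precisely the hard step. The paper itself remarks that although the claim is purely topological, the only proof the authors found is hyperbolic: they build a doubly degenerate structure $N$ on the interior of $M$, show via Canary's covering theorem that the genus-$4$ end of the cover $N_{\Gamma'}$ corresponding to $\Gamma'\star\langle\alpha\rangle$ is convex-cocompact (degeneracy would force finite index, which fails), and conclude that the convex core of $N_{\Gamma'}$ meets the degenerate end of $N$ in a bounded set --- so curves whose geodesic representatives lie deep in that end, i.e.\ curves near the ending lamination $\lambda$, have no power lifting to a closed geodesic in $N_{\Gamma'}$. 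This proves only that the closure of the bad set misses a neighborhood of $\lambda$, weaker than your nowhere-density claim, and nothing in your proposal substitutes for it. Moreover, even granting an open dense good set, your step ``choose a multicurve nearby'' fails as stated: $\PML$-proximity of a weighted multicurve to a good point does not place its individual components in the good set (a component of tiny weight can sit anywhere in $\PML$); one must approximate a \emph{filling} lamination such as $\lambda$ so that every curve of the pants decomposition is individually close --- which is exactly how the paper simultaneously secures Masur-domain membership and badness-avoidance, since $\lambda$ lies in the open Masur domain by a theorem of Canary. Your explicit winding fallback has the same two holes: the disjointness of six homologically rich curves is acknowledged but unresolved, and nothing in that construction controls Masur-domain membership (your parenthetical --- that a curve homotopic into $S_I$ could be ``pushed off the compressing disk'' --- is also too quick, since being homotopic into $S_I$ in $M$ does not make a curve on $S_E$ disjoint from a meridian).
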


Deferring the proof for a moment, pick as in Examples \ref{ex1} and \ref{ex2} an index $2 $ subgroup $\Gamma' \subset \pi_1 (\Sigma_2) $ so that the restriction of $$\rho_i: \pi_1 (\Sigma_2) \star \BZ \longrightarrow \PSL (2,\BC) $$ to the subgroup $\Gamma = \Gamma' \star \BZ $ surjects onto each $\pi_1 (N_i^*) $ .  If in constructing $ (N_i^*) $, the pants decomposition $P_E $ is chosen as indicated in the above claim, there will be no parabolics in the algebraic limit of $ (\rho_i |_{\Gamma }) $.  However, since this is still a sequence of markings for $ N_i^* $, the geometric limit will be $N_\infty' $, which has infinitely generated fundamental group.  This finishes the example.

\begin{proof}[Proof of Claim]
Although the claim is purely topological, the proof we give uses $3 $-dimensional hyperbolic geometry.  It would be nice to give a more straightforward proof; also, it is possible that the second part of the conclusion is satisfied by any pants decomposition of curves in the Masur domain.

To begin, construct by some means a hyperbolic manifold $N $ homeomorphic to the interior of $M$ that has no cusps and in which both ends are degenerate.  One way to produce $N $ is as follows.  First, find a hyperbolic manifold homeomorphic to $\Sigma_3 \times \BR $ that has one degenerate end and one maximally cusped end.  This can be done using an argument similar to the construction of $N_\infty $ above.  We can then glue its convex core to the convex core of $ N_\infty $ as in Lemma \ref{gluingcusps} and fill the resulting cusps to create a totally degenerate hyperbolic manifold $N $ homeomorphic to the interior of $M$.  See \cite {Bonahon-Otal} for information on Dehn filling geometrically infinite manifolds.

Fix an index $2 $ subgroup $\Gamma' \subset \pi_1 (S_I, p) $, and let $N_{\Gamma'} $ be the cover of $N $ corresponding to $\Gamma' \star \left < \alpha \right >$.  Then $N_{\Gamma'} $ has a degenerate end homeomorphic to $\Sigma_3 \times [0, \infty) $ that double covers the genus $2 $ end of $N $.  Adjoining a loop in $N_{\Gamma'}  $ representing $\alpha $ to a level surface of this end and thickening produces a compact core for $N_{\Gamma'} $ homeomorphic to the interior of a compression body with genus $4 $ exterior boundary and connected, genus $3 $ interior boundary.  The tameness theorem of Agol \cite{Agol} and Calegari-Gabai \cite{Calegari-Gabai} implies that $N_{\Gamma'} $ is itself homeomorphic to the interior of such a compression body, and a theorem of Canary \cite{Canary-ends} implies that its genus $4 $ end, $\hat {\CE } $, is either degenerate or convex-cocompact.  It cannot be that both ends of $N_{\Gamma'} $ are degenerate, for then Canary's covering theorem (Theorem \ref {thecoveringtheorem}) would imply that $\Gamma' \star \left < \alpha \right >$ is finite index in $\pi_1 (N) $.  Therefore, $\hat {\CE } $ is convex-cocompact.  

Since the genus $3 $ end, $\CE $, of $N $ is degenerate, it has an ending lamination $\lambda \subset S_E $. Canary has shown that $\lambda $ lies in the Masur domain of $M $ (see \cite{Canary-ends} for a proof of this and the uniqueness of $\lambda $).  Choose a pants decomposition $P_E \subset S_E $ consisting of curves that lie close to $\lambda $ in $\CP \CM \CL (S_E) $.  The Masur domain of $M$ is an open subset of $\CP \CM \CL (S_E) $, so we may assume that each curve in $P_E $ lies inside of it.  Furthermore, their geodesic representatives in $N $ lie very deep inside of $\CE $.  As $\hat {\CE } $ is convex-cocompact, the convex core of $N_{\Gamma'} $ covers a subset of $N $ that has bounded intersection with $\CE $.  So, we may assume that the geodesic representatives of curves in $P_E $ do not intersect its image.  If some power of a curve in $P_E $ were conjugate into $\Gamma' \star \left < \alpha \right>$, then its geodesic representative in $N $ would lift to a closed geodesic in $N_{\Gamma'} $.  Every closed geodesic in $N_{\Gamma'} $ is contained in its convex core, so this is impossible.

This last paragraph shows that as long as the curves in $P_E $ are chosen within a small neighborhood around the ending lamination $\lambda $, then no powers of them are conjugate into subgroup of $\pi_1 (M) $ of the form $\Gamma' \star \left < \alpha \right> $, where $\Gamma' $ is a \it fixed \rm index 2 subgroup of $\pi_1 (S_I) $.    However, there are only finitely many index 2 subgroups of $\pi_1 (S_I) $, so it follows that if we choose $P_E $ from the intersection of all such neighborhoods we can ensure that no power of any of its curves is conjugate into \it any \rm such $\Gamma' \star \left < \alpha \right> $.
\end{proof}

\section{Proof of Theorem \ref{max-cyclic}}\label{sec:max-cyclic}

Before beginning the bulk of the proof, we will present a technical lemma whose proof requires a bit of differential geometry.  Afterwards, Theorem \ref{max-cyclic} will follow from purely topological arguments.

 Recall from Proposition \ref{exhaustion} that a hyperbolic $3$-manifold $M $ with finitely generated fundamental group and no cusps contains a compact core $C \subset M $ for which each component of $\partial C $ facing a convex-cocompact end of $M $ is smooth and strictly convex.  For convenience, let $S_{ cc } $ be the union of those components of $\partial C $ facing convex cocompact ends and $E_{ cc } $ the union of the adjacent components of $M \setminus C $.  Then $E_{ cc } $ is homeomorphic to $ S_{ cc } \times (0,\infty) $ via `radial coordinates': $$ R: S_{ cc } \times (0,\infty) \to E_{ cc }, \, \, \, R(x, t) =\exp_x (t \nu (x)), $$ where $\nu $ is the outer unit-normal vector field along $ S_{ cc } $.  

If $f : C \to N $ is a smooth immersion into some complete hyperbolic $3$-manifold $ N $, it has a natural \emph{radial extension} $\bar f : C \cup E_{cc } \to N $.  Namely, there is a radial coordinate map along the image of $ S_{ cc } $:   $$R_f : S_{ cc } \times (0,\infty) \to N, \, \, \, R_f (x , t) = \exp_{f(x)} (t \nu_f (x)), $$  where $\nu_f (x) $ is the unit vector in $TN_{ f (x) } $ orthogonal to $df_x (TS_x) $ that points away from $f (C) $, and one can then define $$\bar { f } (p) = \begin{cases} R_f \circ R^{-1 } (p) & p \in E_{cc } \\ f (p) & p \in C.  \end{cases} $$
Observe that $\bar f $ is continuous, and differentiable everywhere but on $S_{cc } $.  

In the situations where we will find radial extensions useful, the map $f $ will be very close in the $C^ 2 $ topology to a \it Riemannian \rm immersion.  In particular, the (strict) convexity of the surface $S_{cc } $ will persist in the image.  This implies a convenient regularity in the radial extension:

\begin{lem}
\label{Lemmabilipschitz}
If $f : C \to N $ is a smooth immersion with $ f (S_{cc }) $ convex, there exists some $L>0 $ so that for all $p \in E_{ cc } $ and $v \in T M_p $, $$\frac {1} {L} \leq \frac { | |d\bar{ f }_p (v) | | } { | | v | | } \leq L.  $$
\end{lem}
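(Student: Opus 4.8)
The plan is to compute the metric explicitly in the radial coordinates provided by Proposition \ref{exhaustion} on both the domain $E_{cc}$ and its image under $\bar f$, and to observe that, because $N$ is hyperbolic and both $S_{cc}$ and $f(S_{cc})$ are convex, the two metrics expand at comparable exponential rates as the radial parameter $t\to\infty$. First I would reduce the statement to a bound in the surface directions. Writing points of $E_{cc}$ as $R(x,t)$ with $x\in S_{cc}$ and $t\in(0,\infty)$, the maps $R$ and $R_f$ pull the hyperbolic metrics of $M$ and $N$ back to metrics of the form $dt^2+g_t$ and $dt^2+\tilde g_t$ on $S_{cc}\times(0,\infty)$, since the radial geodesics are unit speed and, by the Gauss lemma, orthogonal to the level surfaces in both manifolds. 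In these coordinates $\bar f$ is the identity on the $(x,t)$ variables, so $d\bar f$ carries $\partial_t$ isometrically to $\partial_t$ and carries level-surface directions to level-surface directions. As both splittings are orthogonal, it suffices to bound the ratio $\tilde g_t(v,v)/g_t(v,v)$ from above and below, uniformly in $x$, $t$, and $v\in T_xS_{cc}$; one then recovers $L$ by combining this with the isometric behaviour in the $\partial_t$ direction.

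Next I would make $g_t$ and $\tilde g_t$ explicit via Jacobi fields. A variation of the normal geodesics through $x$ with initial velocity $v\in T_xS_{cc}$ produces a Jacobi field $J$ with $J(0)=v$ and $J'(0)=Sv$, where $S$ is the shape operator of $S_{cc}$ with respect to the outer normal. Since $M$ has constant curvature $-1$, the Jacobi equation is $J''=J$, so $J(t)$ is the parallel transport of $(\cosh t\cdot\Id+\sinh t\cdot S)v$; as parallel transport is a linear isometry this gives
\[ g_t(v,v)=\|(\cosh t\cdot\Id+\sinh t\cdot S)\,v\|_{g_0}^2, \]
and similarly $\tilde g_t(v,v)=\|(\cosh t\cdot\Id+\sinh t\cdot S_f)\,df(v)\|_{\tilde g_0}^2$, where $g_0,\tilde g_0$ are the induced metrics on $S_{cc}$ and $f(S_{cc})$ and $S_f$ is the shape operator of $f(S_{cc})$. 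The operators $C_t=\cosh t\,\Id+\sinh t\,S$ and $\tilde C_t=\cosh t\,\Id+\sinh t\,S_f$ are self-adjoint, so their effect on norms is controlled by their eigenvalues $\cosh t+\kappa\sinh t$, where $\kappa$ ranges over the relevant principal curvatures.

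The geometric hypotheses enter here. By Proposition \ref{exhaustion} the surface $S_{cc}$ is strictly convex, so by compactness its principal curvatures lie in an interval $[\kappa_{\min},\kappa_{\max}]$ with $\kappa_{\min}>0$; hence every eigenvalue of $C_t$ lies between $\cosh t+\kappa_{\min}\sinh t$ and $\cosh t+\kappa_{\max}\sinh t$, both comparable to $e^t$ for all $t\ge 0$. By hypothesis $f(S_{cc})$ is convex, so its principal curvatures lie in $[0,\tilde\kappa_{\max}]$ and the eigenvalues of $\tilde C_t$ lie between $\cosh t$ and $\cosh t+\tilde\kappa_{\max}\sinh t$, again both comparable to $e^t$. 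Finally, since $S_{cc}$ is compact and $f$ is a smooth immersion, $df$ restricts to a bundle map $TS_{cc}\to T(f(S_{cc}))$ that is uniformly bi-Lipschitz between $g_0$ and $\tilde g_0$. Combining these three facts, both $g_t(v,v)^{1/2}$ and $\tilde g_t(v,v)^{1/2}$ are pinched between constant multiples of $e^t\|v\|_{g_0}$, so their ratio is bounded above and below; transferring this through the orthogonal splitting yields the desired $L$.

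The main obstacle, and the reason the statement is not merely formal, is controlling the behaviour as $t\to\infty$: a priori the two metrics could expand at incomparable rates, or the radial map $R_f$ could degenerate at a focal point, making $\bar f$ fail to be an immersion. What rescues the argument is that $N$ has curvature $-1$, which forces the Jacobi fields to be the explicit $\cosh,\sinh$ combinations, together with convexity of both surfaces, which keeps the eigenvalues of $C_t$ and $\tilde C_t$ bounded below by $\cosh t$ and hence growing like $e^t$ with no focal points. Strict convexity of $S_{cc}$ is what additionally keeps the denominator away from zero for small $t$. The remaining verifications—self-adjointness of the shape operators, the comparison of $\cosh t+\kappa\sinh t$ with $e^t$ uniformly over the compact range of curvatures, and the uniform bi-Lipschitz bound on $df$—are routine consequences of compactness.
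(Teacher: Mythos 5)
Your proof is correct and follows essentially the same route as the paper: radial coordinates, Jacobi fields of the form $\cosh t\, E_1+\sinh t\, E_2$ along the outward normal geodesics, convexity of the two surfaces to control the growth, and compactness of $S_{cc}$ for uniformity. The only noteworthy difference is one of bookkeeping: your formulation via the self-adjoint operators $\cosh t\,\Id+\sinh t\,S$, whose eigenvalues $\cosh t+\kappa\sinh t$ are pinched between positive multiples of $e^t$ uniformly for all $t\ge 0$, delivers both bounds in one stroke, whereas the paper's triangle-inequality estimates (its bounds \eqref{bound1}--\eqref{bound3}) degenerate as $t\to 0$ and must be patched there by a separate compactness argument using that $dR$ and $dR_f$ have maximal rank at $t=0$.
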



The following global statement comes from applying Lemma \ref{Lemmabilipschitz} and a compactness argument on $C $.

\begin{kor}[Radial Extensions are Locally Bilipschitz]
\label{local-bilipschitz}
If $f : C \to N $ is a smooth immersion with $ f ( S_{cc } ) $ convex, there exists some $L>0 $ so that every $p \in C \cup E_{cc } $ has a neighborhood on which $f $ is $L $-bilipschitz.
\end{kor}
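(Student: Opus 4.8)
The plan is to treat the compact part $C$, the open end $E_{cc}$, and the interface $S_{cc}$ separately, using compactness on $C$, Lemma \ref{Lemmabilipschitz} on $E_{cc}$, and the convexity of $f(S_{cc})$ to glue across $S_{cc}$. On $C$ the radial extension $\bar f$ agrees with $f$, which is a smooth immersion of a compact manifold with boundary; hence $df_p$ is a linear isomorphism at every point, the continuous functions $p\mapsto\|df_p\|$ and $p\mapsto\|df_p^{-1}\|$ are bounded on $C$ by some $L_1$, and a standard compactness argument (using in addition the resulting uniform $C^2$ bounds to prevent folding) produces a uniform radius $r_1>0$ so that $f$ is $L_1$-bilipschitz on every ball of radius $r_1$ in $C$. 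This is the promised compactness argument on $C$.

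On the open end $E_{cc}$ the map $\bar f$ is a smooth local diffeomorphism between equidimensional manifolds, and Lemma \ref{Lemmabilipschitz} supplies a uniform pointwise bound $L_2^{-1}\|v\|\le\|d\bar f_p(v)\|\le L_2\|v\|$. I would upgrade this to a local bilipschitz estimate at each $p_0\in E_{cc}$ as follows. The inverse function theorem gives a neighborhood $W$ on which $\bar f$ is a diffeomorphism onto an open set. Choosing a geodesically convex ball $B\subset W$ about $p_0$ and integrating $\|d\bar f\|\le L_2$ along domain geodesics yields the upper bound $d_N(\bar f(p),\bar f(q))\le L_2\,d(p,q)$ for $p,q\in B$. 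Shrinking $B$ so that $\bar f(B)$ lies in a convex metric ball of $N$ around $\bar f(p_0)$ that is contained in $\bar f(W)$---possible since $N$ has positive convexity radius there---the minimizing $N$-geodesic between $\bar f(p)$ and $\bar f(q)$ stays inside $\bar f(W)$ and lifts under $(\bar f|_W)^{-1}$, whose derivative has norm at most $L_2$, to a path from $p$ to $q$; this gives the lower bound $d(p,q)\le L_2\,d_N(\bar f(p),\bar f(q))$. Thus $\bar f$ is $L_2$-bilipschitz on $B$. The radius of $B$ may have to shrink, e.g. where the image meets the thin part of $N$, but the constant $L_2$ does not degrade, which is all that is required.

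It remains to treat $p_0\in S_{cc}$, where a neighborhood meets both sides and $\bar f$ is merely continuous, being smooth with the bounds above on each side. The upper Lipschitz bound passes to such a neighborhood since $\bar f$ is continuous and Lipschitz on each side. For the lower bound I would use convexity: locally the image $f(C)$ lies on the side of $f(S_{cc})$ away from which the outer normal $\nu_f$ points, while the radial image of $E_{cc}$ is swept out by the geodesics in the direction $\nu_f$ and so lies on the opposite side. Because $f(S_{cc})$ is convex these two images meet only along $f(S_{cc})$ on a small enough neighborhood, so $\bar f$ is locally injective there---in fact a local homeomorphism---and gluing the two one-sided $L_0$-bilipschitz estimates ($L_0=\max\{L_1,L_2\}$) along $f(S_{cc})$ produces a local bilipschitz bound with a constant $L$ depending only on $L_0$.

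I expect the main obstacle to be exactly this last gluing across $S_{cc}$: the pointwise derivative bounds alone do not prevent $\bar f$ from folding the two sides onto one another, and it is only the convexity of $f(S_{cc})$---which forces the outward normal geodesics to leave, rather than re-enter, the region already covered by $f(C)$---that rescues local injectivity and hence the lower bilipschitz bound. The interior estimates on $C$ and $E_{cc}$ are routine once the uniform derivative bounds are available.
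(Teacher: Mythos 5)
Your proposal is correct and takes essentially the same route as the paper, which derives the corollary exactly as you do: from the uniform derivative bounds of Lemma \ref{Lemmabilipschitz} on $E_{cc}$ together with a compactness argument on $C$. Your careful gluing across $S_{cc}$ --- using the convexity of $f(S_{cc})$ to rule out folding of the two sides and so obtain local injectivity --- fills in a detail the paper leaves implicit, and is the right way to complete the sketch.
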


\begin{proof}[Proof of Lemma \ref{Lemmabilipschitz}]
Consider a component $E \subset E_{ cc } $, and let $S \subset S_{ cc} $ be the adjacent component of $\partial C $.  Here, $\bar f $ is the composition $R_f \circ (R)^{-1} $ of radial coordinate maps, so to prove the Lemma it suffices to find a constant $L $ so that for all  $ (x, t)\in S \times (0,\infty) $ and $v\in TS_x \times \BR $,\begin{equation}\label{trouble}\frac {1} {L} \leq \frac { | | (dR_f)_{ (x, t) } (v) | | } { | |dR_{ (x, t) } (v) | | } \leq L . \end{equation} Since the ratio is one when $v $ is contained in the $\BR $ factor, from now on we will assume $v \in TS_x $.

We first estimate $\Vert dR_{(x,t)}(v)\Vert$. Given $x\in S $ and $v \in TS_x $, let $g(s)$ be a curve in $S$ with $g(0)=x$ and $g'(0)= v $, and consider the geodesic variation $$\gamma_s (t) : =R (g(s),t)=\exp_{g(s)}(t\nu (g(s))) $$  The corresponding Jacobi field $J_{ x, v } (t)=\frac d{ds}R (g(s),t) |_{ s =0} $ along the geodesic $\gamma_0(t) $ then satisfies $J_{ x, v }(t)=dR_{(x,t)}(v)$.

The Jacobi-field $J_{ x, v }(t)$ is determined by its initial conditions
\begin{align*} 
J_{ x, v }(0) & =dR_{(x,0)} (v), \, \, \text{and} \\
\frac\DD{dt}J_{ x, v }(t) |_{ t =0} &=\frac\DD{dt}\frac d{ds}\exp_{g(s)}\big (t\nu (g (s))\big) |_{t =0} \\& =\frac\DD{ds}\frac d{dt}\exp_{g(s)}\big (t\nu (g (s))\big) |_{ t =0}\\
&=\frac\DD{ds}\nu (g (s)) = \DD_{dR_{ (x,0) } ( v) } \nu \end{align*}
Therefore, we have $$J_{ x, v } (t) = \cosh(t)E_1(t)+\sinh(t)E_2(t ), $$ where $E_1(t)$ and $E_2(t)$ are the parallel vector fields along $\gamma_0 $ with $E_1(0)=dR_{ (x, 0) } (v)  $ and $E_2(0)= \DD_{dR_{ (x,0) } ( v) } \nu $.  That the right-hand side satisfies the Jacobi equation follows quickly from the fact that $E_1(t) $ and $E_2(t) $ are both orthogonal to $\gamma_0' (t) $. 

The triangle inequality, together with the fact that the vector fields $E_1$ and $E_2$ have constant length, shows that
$$\Vert J_{ x, v }(t)\Vert\le\cosh(t)(\Vert dR _{(x,0)} (v)\Vert+\Vert\DD_{dR_{ (x,0) } ( v) }\nu \Vert).  $$
On the other hand we have 
\begin{align*}
\Vert J_{ x, v }(t)\Vert^2=&\cosh(t)^2\Vert dR_{(x,0)} (v)\Vert^2+\sinh(t)^2\Vert\DD_{dR_{ (x,0) } ( v) } \nu \Vert^2\\
&+2\cosh(t)\sinh(t)\langle dR _{(x,0)}v,\DD_{dR_{(x,0)} (v)}\nu\rangle
\end{align*}
By convexity of $S_{cc } $, the last term in the sum is positive. A little bit of algebra and the fact that $J_{ x, v }(t)=dR_{(x,t)}(v)$ yields
\begin{equation}\label{bound1}
\frac{\sinh(t)}2\le \frac{\Vert dR_{(x,t)}(v)\Vert}{\Vert dR_{(x,0)} (v)\Vert+\Vert\DD_{dR_{(x,0)} (v)}\nu\Vert}\le\cosh(t)
\end{equation}
Since $ f (S_{cc }) $ is convex, a similar computation shows 
\begin{equation}\label{bound2}
\frac{\sinh(t)}2\le \frac{\Vert d (R_f)_{(f (x),t)}(v)\Vert}{\Vert d (R_f)_{(f (x),t)} (v)\Vert+\Vert\DD_{d (R_f)_{(f (x),t)}v}\nu_f\Vert}\le\cosh(t)
\end{equation}
By compactness the ratio between the denominators in \eqref{bound1} and \eqref{bound2} is uniformly bounded from above and below. In other words, there is some positive constant $c$ with
\begin{equation}\label{bound3}
\frac{\sinh(t)}{2 c \cosh(t)}\le \frac{\Vert d (R_f) _{(f (x),t)}(v)\Vert}{\Vert dR_{(x,t)}(v)\Vert}\le\frac{2 c\cosh(t)}{\sinh(t)}
\end{equation}
for all $(x,t)\in S\times(0,\infty)$ and $v\in T_xS$. If $t $ is constrained away from zero then the lower and upper bounds in \eqref{bound3} are bounded by positive numbers from below and above respectively.  When $t =0 $ both $dR_{ (x, t) } $ and $d (R_f)_{ (x, t) } $ have maximal rank, so constant positive bounds arise from a compactness argument.  This yields \eqref{trouble} and concludes the proof of the Lemma.
\end{proof}


We are now ready to prove the main result of this section.

\begin{named}{Theorem \ref{max-cyclic}}
Let $\Gamma$ be a finitely generated group and $(\rho_i)$ a sequence in $\calD (\Gamma) $. Assume that $(\rho_i) $ converges algebraically to a representation $\rho \in \calD (\Gamma) $ and geometrically to a subgroup $G$ of $\PSL_2\BC$. If 
\begin{itemize}
\item $\rho(\Gamma)$ does not contain parabolic elements, 
\item maximal cyclic subgroups of $\rho(\Gamma)$ are maximal cyclic in $G$,
\end{itemize}
then $G=\rho(\Gamma)$.
\end{named}

Before going further, set $M_A=\BH^3/\rho(\Gamma) $, $M_G=\BH^3/G$ and $M_i=\BH^3/\rho_i(\Gamma) $ for $i=1,2,\dots$, choose a base frame $\omega_{\BH^3}$ for hyperbolic space $\BH^3$ and let $\omega_i$, $\omega_A$ and $\omega_G$ be the corresponding base frames of $M_i$, $M_A$ and $M_G$ respectively. By  Proposition \ref{convergence-convergence-geom}, the pointed manifolds $(M_i,\omega_i)$ converge geometrically to $(M_G,\omega_G)$.  We may assume that $M_G $ is noncompact, for otherwise Mostow's Rigidity Theorem implies that the sequence $(M_i) $ must be eventually stable, so certainly $G =\rho (\Gamma) $.

We claim that $M_G $ has infinite volume. If not, it has finite volume and $(M_i ) $ is obtained by performing Dehn filling on $M_G $ with larger and larger coefficients \cite[Theorem E.2.4]{Benedetti-Petronio}.  Then there must be parabolics in the algebraic limit $\rho (\Gamma) $.  For otherwise, combining  a result of Canary \cite {Canary-ends} with the Tameness Theorem of Agol \cite{Agol}  and Calegari-Gabai \cite {Calegari-Gabai} implies that $M_A =\BH ^3 / \rho (\Gamma) $ is either convex-cocompact, or has  a degenerate end.  The first case is impossible by Marden's Stability Theorem \cite {Marden74} and the second violates Canary's Covering Theorem (Theorem \ref{thecoveringtheorem}).

There is a covering map $\pi : M_A \to M_G $ induced by the inclusion $\rho (\Gamma) \subset G $, and our goal is to show that this is a homeomorphism.  Recall that Proposition \ref{exhaustion} provides an exhaustion of $M_A $ by compact cores $C \subset M_A $ such that
\begin{itemize}
\item[(1)] if a component $S$ of $\D C$ faces a convex cocompact end of $M_A$ then $S$ is smooth and strictly convex,
\item[(2)] if a component $S$ of $\D C$ faces a degenerate end of $M_A$ then the restriction $\pi\vert_S:S\to\pi(S)$ is a finite covering onto an embedded surface in $ M_G$.
\end{itemize}
Fixing a compact core $C_0 \subset M_A $, we may also assume that all $C $ are large enough to contain $C_0 $ and satisfy the following property:
\begin{itemize}
\item[(3)] $\pi (C_0) \cap \pi (S) = \emptyset $ for any component $S \subset \partial C $ facing a degenerate end of $M_A $.
 \end{itemize}
Then to prove that $\pi $ is a homeomorphism, it clearly suffices to show that $\pi |_C $ is injective for all such compact cores $C \subset M_A $.

Fix a compact core $C \subset M_A $ as described above.  The geometric convergence $(M_i,\omega_i) \to (M_G,\omega_G) $ supplies for sufficiently large $i $ an almost isometric embedding $\phi_i : \pi (C) \hookrightarrow M_i, $ so for large $i $, we have a map $$f_i : C \to M_i , \, \, f_i = \phi_i \circ (\pi|_C)$$ that behaves much like the restriction of a nearly Riemannian covering map.  In fact, $f_i $ is $\pi_1 $-surjective.  For if $S \subset \Gamma $ is a finite generating set then $C $ contains loops based at $\omega_A $ representing the elements of $\rho (S) $; $f_i $ then maps these loops to loops in $M_i $ representing $\rho_i (S) $, which generate $\pi_1 (M_i) \cong \rho_i (\Gamma) $.  We aim to show that $f_i $ is actually an embedding, as the same will then be true for $\pi |_C $.  

We first consider the case where $M_A $ is convex-cocompact, as the proof is particularly simple.  In this case, every component of $\partial C $ is strictly convex and faces a convex-cocompact end of $M_A $, so $f_i $ radially extends (as in the beginning of the section) to a globally defined map $$\bar { f_i } : M_A \to M_i.  $$  We claim that this is a covering map for $i >>0$.  To see this, note that when $i $ is large $f_i $ is $C^2 $-close to a local isometry, so the strict convexity of $\partial C $ persists after applying $f_i $.  Therefore Corollary \ref{local-bilipschitz} applies to show that $\bar { f_i } $ is (uniformly) locally bilipschitz. It is well-known that any locally isometric map between complete Riemannian manifolds is a covering map, and in fact the same proof applies to locally bilipschitz maps.  So, $\bar { f_i } : M_A\to M_i $ is a covering map.  However, $f_i $ is $\pi_1 $-surjective, so its extension $\bar { f_i } $ is a $\pi_1 $-surjective covering map, and therefore a homeomorphism.  This shows that $f_i $ is injective, and in particular $\pi |_C $ is as well.

\begin{figure} 
\centering
\includegraphics{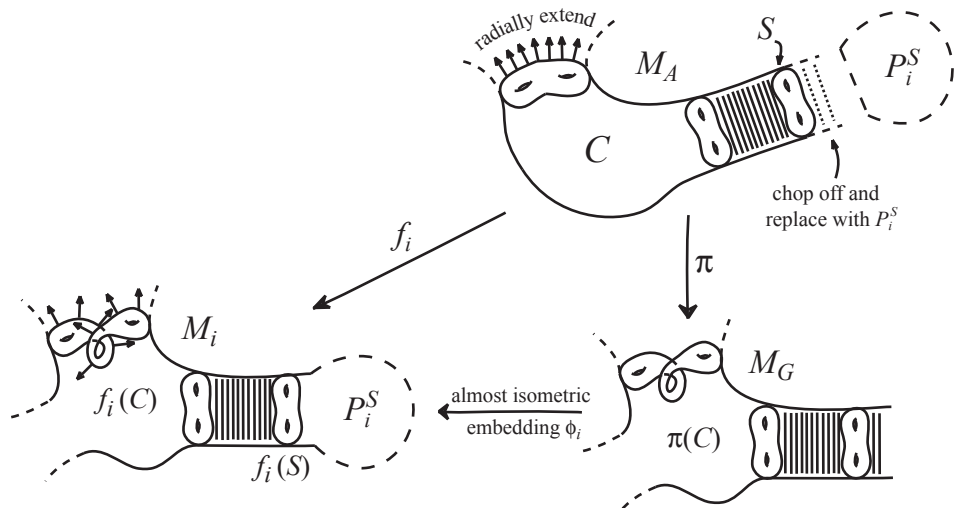}
\caption{Extending $f_i = \phi_i \circ \pi $ to a cover of $M_i $. \label{thm1}}  
\end{figure}

In the general case, the argument needs modification because one cannot radially extend $f_i $ into the degenerate ends of $M_A $.  To deal with this, we will alter the problematic parts of $M_A $ so that an extension of $f_i $ is obvious.

\begin{claim} If $S \subset \partial C $ faces a degenerate end of $M_A $, then the restriction of $f_i $ to $S $ is an embedding with image a separating surface in $M_i $.
\end{claim}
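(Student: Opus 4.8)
The plan is to analyze $f_i|_S$ through its factorization $f_i|_S = \phi_i \circ (\pi|_S)$. Since $\phi_i$ is an almost isometric embedding of the compact set $\pi(C)$, it is injective for large $i$, so $f_i|_S$ will be an embedding precisely when the finite covering $\pi|_S : S \to \pi(S)$ furnished by property (2) has degree one. Thus the heart of the matter is to prove that $\pi|_S$ is injective, and this is exactly where the hypothesis on maximal cyclic subgroups should enter.

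To show $\deg(\pi|_S) = 1$, I would apply the Covering Theorem (Theorem \ref{thecoveringtheorem}) to $\pi : M_A \to M_G$ and the degenerate end $\CE$ with $\partial\CE = S$: there is a standard neighborhood $U_\CE \cong S \times \BR$ so that $\pi|_{U_\CE} : U_\CE \to U_{\CE'}$ is the degree-$d$ covering of a degenerate end neighborhood in $M_G$ induced by $\sigma$. The key identity is that, for compatible basepoints, the image subgroup satisfies $\sigma_* \pi_1(S) = \rho(\Gamma) \cap \pi_1(U_{\CE'})$ inside $G$: a loop in $U_{\CE'}$ lifts to a loop in $M_A$ exactly when it lies in $\rho(\Gamma)$, and such a lift is forced to remain in the component $U_\CE$ of $\pi^{-1}(U_{\CE'})$ containing its starting point, hence represents an element of $\sigma_*\pi_1(S)$. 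Now suppose $d > 1$ and pick $g \in \pi_1(U_{\CE'}) \setminus \sigma_*\pi_1(S)$; since $\sigma_*\pi_1(S)$ has finite index $d$, a pigeonhole argument on the left-coset action of $\langle g\rangle$ yields $k \geq 2$ with $g^k \in \sigma_*\pi_1(S) \subseteq \rho(\Gamma)$. As $\rho(\Gamma)$ has no parabolics and is torsion-free, $g^k$ is loxodromic, so $g$ shares its axis and commutes with it; letting $\langle h\rangle$ be the maximal cyclic subgroup of $\rho(\Gamma)$ containing $g^k$, the group $\langle g, h\rangle \subseteq G$ is infinite cyclic and contains $\langle h\rangle$. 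The maximal-cyclic hypothesis forces $\langle h\rangle$ to be maximal cyclic in $G$, whence $g \in \langle h\rangle \subseteq \rho(\Gamma)$. But then $g \in \rho(\Gamma) \cap \pi_1(U_{\CE'}) = \sigma_*\pi_1(S)$, contradicting the choice of $g$. Hence $d = 1$ and $f_i|_S$ is an embedding.

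For the separating statement, set $\Sigma_i = f_i(S)$, a closed embedded orientable surface in the orientable manifold $M_i$. I would show that $\Sigma_i$ has trivial mod-$2$ intersection pairing with $H_1(M_i;\BZ/2)$, which is equivalent to separating. This is where property (3) pays off: because $C_0$ is a compact core, a generating set of $\rho_i(\Gamma) = \pi_1(M_i)$ is represented by $f_i$-images of loops based in $C_0$, and property (3) together with injectivity of $\phi_i$ gives $f_i(C_0) \cap \Sigma_i = \phi_i(\pi(C_0)) \cap \phi_i(\pi(S)) = \emptyset$. Thus generators of $\pi_1(M_i)$ are represented by loops disjoint from $\Sigma_i$, so the intersection homomorphism $H_1(M_i;\BZ/2) \to \BZ/2$ dual to $\Sigma_i$ vanishes on generators and hence identically; therefore $\Sigma_i$ is null-homologous mod $2$ and separates $M_i$.

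I expect the main obstacle to be the degree-one step: cleanly converting the purely topological failure of injectivity of $\pi|_S$ (an element $g$ of the end group whose $k$-th power, $k \geq 2$, descends into $\rho(\Gamma)$) into a genuine violation of the maximal-cyclic hypothesis. The two delicate pieces are the identification $\sigma_*\pi_1(S) = \rho(\Gamma)\cap \pi_1(U_{\CE'})$, which pins down exactly which loops close up in $M_A$, and the root argument in $\PSL_2\BC$ showing that a loxodromic together with one of its roots generates an infinite cyclic group, so that maximality in $G$ pulls $g$ back into $\rho(\Gamma)$. The separating part, by contrast, is essentially bookkeeping once property (3) is in hand.
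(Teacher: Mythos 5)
Your proposal is correct and takes essentially the same route as the paper: the paper likewise uses the maximal-cyclic hypothesis to show the finite covering $\pi|_S$ is an embedding (``otherwise there would be a loop in $\pi(S)$ that does not lift to $M_A$, but that has a power which does''), and proves separation exactly as you do, combining property (3) with the $\pi_1$-surjectivity of $f_i|_{C_0}$ to conclude that every loop in $M_i$ has trivial algebraic intersection with $f_i(S)$. Your write-up merely makes explicit the details the paper compresses --- the identification of $\sigma_*\pi_1(S)$ with $\rho(\Gamma)\cap\pi_1(U_{\CE'})$ via components of the preimage, the coset pigeonhole producing the power, and the axis-stabilizer argument turning a root into a violation of maximal cyclicity --- all of which are sound.
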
 
\begin{proof}
By Condition  $(2) $ above, $\pi |_{ S  } $ is a finite covering onto its image.  The assumption that maximally cyclic subgroups of $\rho (\Gamma) $ are maximally cyclic in $G $ then implies that $\pi |_{ S  } $ is an embedding, for otherwise there would be a loop in $\pi (S ) $ that does not lift to $M_A $, but that has a power which does.

Next, property $(3) $ above implies that every component $S \subset f_i (S ) $ is disjoint from $ f_i (C_0) $.  However, the argument given to show that $f_i | _C $ is $\pi_1 $-surjective also applies to $f_i | _{C_0 }$, so every loop in $M_i $ is homotopic into $f_i (C_0) $ and therefore has trivial algebraic intersection with $ S $.  Therefore $S $ is separating.\end{proof}

For each such $S $, let $P_i^ S $ be the closure of the component of $M_i \setminus f_i (S) $ that does not contain $f_i (C_0) $.  Then if $E _{cc } $ is the union of the components of $M_A \setminus C $ that are neighborhoods of convex cocompact ends, one can construct a new $3$-manifold \vspace{1mm} $$M_A' = \big ( C \cup E_{cc }\big) \sqcup_{ f_i } \big (\bigcup_S P_i^S \big ) $$ by gluing each $P_i^ S $ to $C \cup E_{cc } $ along $S $.  The map $f_i $ extends naturally to a continuous map $\bar f_i: M_A' \to M_i; $ the extension into $E_{cc } $ is radial and on $P_i^S $ we use the natural inclusion into $M_i $.  It is easy to see that $\bar f_i $ is a $\pi_1 $-surjective covering map, so the proof ends the same way it did in the previous case and $M_A = M'_A $. \qed

\vspace{2mm}
We would like to observe that we did not really use that every maximal cyclic group in $\rho(\Gamma)$ is maximal cyclic in $G$. We namely proved the following less aesthetically pleasant but more general theorem:

\begin{sat}\label{max-cyclic-ugly}
Let $\Gamma$ be a finitely generated group and $(\rho_i)$ a sequence in $\calD (\Gamma) $. Assume that $ (\rho_i) $ converges algebraically to a representation $\rho$ and geometrically to a subgroup $G$ of $\PSL_2\BC$. If 
\begin{itemize}
\item $\rho(\Gamma)$ does not contain parabolic elements, and 
\item every degenerate end of $\BH^3/\rho(\Gamma)$ has a neighborhood which embeds under the covering $\BH^3/\rho(\Gamma)\to\BH^3/G$,
\end{itemize}
then $G=\rho(\Gamma)$. \qed
\end{sat}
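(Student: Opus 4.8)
The plan is to observe that the proof just given for Theorem \ref{max-cyclic} never used the full strength of the maximal-cyclic hypothesis, and to isolate the single step where it entered. Scanning that argument, the assumption that maximal cyclic subgroups of $\rho(\Gamma)$ are maximal cyclic in $G$ was invoked in exactly one place: inside the Claim, to upgrade the conclusion of Condition $(2)$ -- that $\pi|_S : S \to \pi(S)$ is a \emph{finite covering} onto an embedded surface -- to the statement that $\pi|_S$ is an \emph{embedding}. Everything else (the reduction to showing $\pi|_C$ injective, the radial extension of $f_i$ over the convex-cocompact ends $E_{cc}$, the locally bilipschitz estimate of Corollary \ref{local-bilipschitz}, the separating argument using Condition $(3)$, and the construction of the covering $\bar f_i : M_A' \to M_i$) is purely topological or geometric and makes no reference to maximal cyclic subgroups. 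So it suffices to reprove the embedding conclusion from the weaker hypothesis.

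Under the new assumption, each degenerate end $\CE$ of $M_A = \BH^3/\rho(\Gamma)$ has a neighborhood $U_\CE$ on which the covering $\pi : M_A \to M_G$ restricts to an embedding. I would simply fold this into the choice of exhaustion: since the standard neighborhood of a degenerate end is a product $\D\CE \times \BR$ and the compact cores supplied by Proposition \ref{exhaustion} may be taken as large as desired, I can arrange in addition to Conditions $(1)$--$(3)$ that every boundary component $S \subset \D C$ facing a degenerate end is pushed far enough out to lie inside the embedding neighborhood $U_\CE$. Compatibility with the earlier conditions is immediate, as all of them are ``enlarge the core'' requirements together with the smoothness and strict convexity arranged on the convex-cocompact side, which are unaffected by deepening the degenerate-facing boundary.

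With $S \subset U_\CE$ and $\pi|_{U_\CE}$ an embedding, the restriction $\pi|_S$ is automatically injective, hence an embedding; this replaces the maximal-cyclic step in the Claim and gives that $f_i|_S = \phi_i \circ \pi|_S$ is an embedding for large $i$. The remainder of the Claim -- that $f_i(S)$ separates $M_i$ -- follows verbatim from Condition $(3)$ and the $\pi_1$-surjectivity of $f_i|_{C_0}$, exactly as before. From here the proof proceeds without change: cut along the images $f_i(S)$, glue in the outer pieces $P_i^S$ and radially extend over $E_{cc}$ to build the $\pi_1$-surjective covering map $\bar f_i : M_A' \to M_i$, conclude it is a homeomorphism, and deduce that $\pi|_C$ is injective for every such core $C$, whence $\pi$ is a homeomorphism and $G = \rho(\Gamma)$. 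I do not expect a genuine obstacle; the only point requiring a moment's care is verifying that the embedding neighborhoods can be realized simultaneously with Conditions $(1)$--$(3)$ in a single exhaustion, which is where I would concentrate the write-up.
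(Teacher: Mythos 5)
Your proposal is correct and is essentially the paper's own argument: the authors state Theorem \ref{max-cyclic-ugly} with only the remark that the proof of Theorem \ref{max-cyclic} never used the maximal-cyclic hypothesis except to upgrade the finite covering $\pi\vert_S:S\to\pi(S)$ on degenerate-end boundary components to an embedding, which your weaker hypothesis supplies directly once the exhausting cores are taken large enough that these components lie in the embedding neighborhoods of the degenerate ends. Your identification of that single point of use, and the routine compatibility check with Conditions $(1)$--$(3)$, is exactly the intended content of the paper's \qed.
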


Before concluding this section, observe that Theorem \ref{max-cyclic} together with Theorem \ref{anderson} imply the Anderson-Canary Theorem \ref{AC} mentioned in the introduction.

\section{Attaching roots}\label{sec:attaching-roots}
In this section we prove:

\begin{prop}\label{attaching-roots}
Let $M$ and $N$ be hyperbolic 3-manifolds with infinite volume and let $\tau:M\to N$ be a covering.  Assume that $N$ has no cusps, that $\pi_1(M)$ is finitely generated and that $M$ has a degenerate end which does not embed under the covering $\tau$. Then there is a hyperbolic 3-manifold $M'$ with finitely generated fundamental group, with $$|\chi(M')|<|\chi(M)|$$ and coverings $\tau':M\to M'$ and $\tau'':M'\to N$ with $\tau=\tau''\circ\tau'$.
\end{prop}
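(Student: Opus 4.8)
The plan is to build $M'$ as an intermediate cover that ``folds down'' the offending degenerate end. Let $\CE$ be the degenerate end of $M$ that does not embed under $\tau$. Since $N$ has no cusps and $\pi_1(M)$ is finitely generated, the Covering Theorem (Theorem \ref{thecoveringtheorem}) applies: $\CE$ has a standard neighborhood $U_\CE$ on which $\tau$ restricts to a finite covering $U_\CE\to\tau(U_\CE)$ onto a neighborhood of a degenerate end $\CE'$ of $N$, covered by a finite covering $\sigma:\partial\CE\to\partial\CE'$ of some degree $d$. Because $\CE$ does not embed, $d\geq 2$. Writing $S=\partial\CE$ and $S'=\partial\CE'$ and normalizing base points so that the inclusions are literal, we obtain surface subgroups $\pi_1 S\subset\pi_1 S'\subset\pi_1 N$ with $[\pi_1 S':\pi_1 S]=d\geq 2$ and $\pi_1 S\subset\pi_1 M$. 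I would then set
$$\Lambda:=\langle\pi_1 M,\ \pi_1 S'\rangle\subset\pi_1 N,\qquad M':=\BH^3/\Lambda.$$
Since $\Lambda$ lies between $\pi_1 M$ and $\pi_1 N$, the inclusions give coverings $\tau':M\to M'$ and $\tau'':M'\to N$ with $\tau=\tau''\circ\tau'$. The group $\Lambda$ is discrete and torsion-free as a subgroup of $\pi_1 N$, and finitely generated because it is generated by a finite generating set of $\pi_1 M$ together with one for the surface group $\pi_1 S'$; hence by the Tameness Theorem $M'$ is the interior of a compact $3$-manifold and $\chi(M')$ is well defined.

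It then remains to prove $|\chi(M')|<|\chi(M)|$, which is the heart of the matter. The intuition is that $\Lambda$ is an amalgam of $\pi_1 M$ and $\pi_1 S'$ along their common subgroup $\pi_1 S$: geometrically, passing from $M$ to $M'$ trades the degenerate end $\CE$ of boundary $S$ for one of boundary $S'$, and the Euler characteristic should transform by the Mayer--Vietoris bookkeeping
$$\chi(M')=\chi(\pi_1 M)+\chi(\pi_1 S')-\chi(\pi_1 S)=\chi(M)-(d-1)\,\chi(S'),$$
where we used $\chi(S)=d\,\chi(S')$ coming from $\sigma$. Since a degenerate end has boundary of negative Euler characteristic, $\chi(S')<0$, so $(d-1)\chi(S')<0$ and $|\chi(M')|=|\chi(M)|-(d-1)|\chi(S')|<|\chi(M)|$. (One may check this against Example \ref{ex2}: there $M$ is the genus-$4/3$ compression body, $d=2$, $S'=\Sigma_2$, and the formula gives $\chi(M')=-3$, i.e. $M'=\BH^3/G$.)

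Rather than verify the amalgam decomposition of $\Lambda$ by hand, I would follow Kapovich's suggestion and establish the inequality through dimensions of deformation spaces, which convert Euler characteristics into the holomorphic data of Theorem \ref{AhlforsBers}: $\dim_\BC\calcium(W)=3|\chi(W)|$ for every relevant $W$. The covering $\tau'$ induces an injective holomorphic pullback $(\tau')^*:\calcium(M')\hookrightarrow\calcium(M)$, already giving $|\chi(M')|\leq|\chi(M)|$; the point is to see the strict drop. To do so I would realize the combination at the level of deformation spaces, exhibiting $\calcium(M')$ as a fiber product of $\calcium(M)$ and $\calcium(\BH^3/\pi_1 S')$ over $\calcium(\BH^3/\pi_1 S)$, so that the expected dimension is
$$\dim_\BC\calcium(M')=\dim_\BC\calcium(M)+\dim_\BC\calcium(\BH^3/\pi_1 S')-\dim_\BC\calcium(\BH^3/\pi_1 S).$$
Combining this with $\dim_\BC\calcium(\BH^3/\pi_1 S)=3|\chi(S)|=3d\,|\chi(S')|=d\cdot\dim_\BC\calcium(\BH^3/\pi_1 S')$ gives $\dim_\BC\calcium(M')=\dim_\BC\calcium(M)-(d-1)\dim_\BC\calcium(\BH^3/\pi_1 S')$, and the last term is strictly positive because $S'$ supports a degenerate end. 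Dividing by $3$ yields exactly $|\chi(M')|<|\chi(M)|$.

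The main obstacle is precisely this combination step. Unlike the Klein--Maskit combinations used for the examples, the amalgamating surface $S$ here is carried by a \emph{degenerate} end rather than a convex-cocompact boundary, so one cannot simply invoke Maskit's theorem or the Ahlfors--Bers gluing of quasifuchsian factors. Making the fiber-product description of $\calcium(M')$ rigorous requires using the geometry of the degenerate end supplied by the Covering Theorem --- a standard neighborhood $U_\CE$ on which $\tau'$ is a finite cover onto a model end of $M'$ --- to control simultaneously the topology of the compact core of $M'$ and the way deformations of the convex-cocompact ends of $M'$ restrict along $\tau'$ to those of $M$. Once the dimension formula is in place, the inequality, and hence the whole proposition, follows at once.
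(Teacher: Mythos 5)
Your construction of $M'$ is exactly the paper's: both take $M'=\BH^3/\Lambda$ with $\Lambda=\langle\pi_1 M,\pi_1 S'\rangle$, where $S'$ is the surface produced by the Covering Theorem, and the factorization $\tau=\tau''\circ\tau'$ and finite generation of $\Lambda$ come for free. But the heart of the matter --- the strict inequality $|\chi(M')|<|\chi(M)|$ --- rests in your write-up on an exact formula $\chi(M')=\chi(M)-(d-1)\chi(S')$, derived from two unestablished claims: that $\Lambda$ is the abstract amalgam $\pi_1 M\star_{\pi_1 S}\pi_1 S'$ (a subgroup of $\pi_1 N$ generated by two subgroups need not be their amalgam; extra relations are possible), and that $\calcium(M')$ is a fiber product of $\calcium(M)$ and $\calcium(\BH^3/\pi_1 S')$ over $\calcium(\BH^3/\pi_1 S)$ with additive dimension count. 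The fiber-product claim is genuinely problematic, not just unproved: points of these deformation spaces record holonomy only up to conjugacy, so a pair of structures ``agreeing'' over $\calcium(\BH^3/\pi_1 S)$ does not determine a representation of $\Lambda$ (the conjugation is unpinned), there is no combination theorem available to produce a discrete representation from such a pair along this surface, and even granting a map to the fiber product one would need transversality to extract the expected dimension. You acknowledge this obstacle yourself, which is honest, but it means the proof is incomplete precisely at its only nontrivial step. (Your side remark that $(\tau')^*$ is injective is also unjustified --- a structure on $M'$ is not obviously determined by its pullback --- though nothing essential hinges on it.)

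The paper's proof shows that no such combination machinery, and no exact formula, is needed: it proves the soft statement that the holomorphic map $(\tau')^*:\calcium(M')\to\calcium(M)$ has \emph{discrete fibers} and is \emph{not open}, whence $\dim_\BC\calcium(M')<\dim_\BC\calcium(M)$ and the inequality follows from Theorem \ref{AhlforsBers}. Discreteness of fibers comes from finiteness of roots in $\PSL_2\BC$: since $\Lambda$ is generated by $\tau'_*(\pi_1 M)$ together with the torsion-free group $H=\pi_1 S'$, and $\tau'_*(\pi_1 M)\cap H$ has finite index in $H$, Corollary \ref{extension-finite} shows a faithful representation of $\pi_1 M$ admits only finitely many extensions to $\Lambda$, and holonomies in a fixed conjugacy class form a discrete subset of $\calcium(M')$. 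Non-openness is where the nontriviality $d\geq 2$ of the covering $\bar\tau:S\to S'$ enters: under the Ahlfors--Bers parameterization the map factors through $\bar\tau^*:\CT(S')\to\CT(S)$ in the boundary coordinates, and the image of $\bar\tau^*$ has positive codimension. You may want to compare the two strategies: your route, if it could be completed, would give the sharper quantitative drop $(d-1)|\chi(S')|$, but it requires gluing theory along a surface that the paper deliberately avoids; the paper's route gives only strict inequality, which is all that the proof of Theorem \ref{main} consumes.
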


Note that since $M $ and $M' $ are not closed, the ratio of $\chi (M)  $ and $\chi(M') $ is not necessarily the degree of the covering $M \to  M'  $.

\vspace{2mm}

Continuing with the notation above, by Proposition \ref{exhaustion} the manifold $M$ has a standard compact core $C$ with the property that if a component $S$ of $\D C$ faces a degenerate end of $M$ then the restriction of $\tau$ to $S$ is a covering onto an embedded surface in $N$. The assumption that $M$ has a degenerate end which does not embed under the covering $\tau$ implies that there is actually a component $S_0$ of $\D C$ such that 
$$\tau\vert_{S_0}:S_0\to\tau(S_0)$$
is a non-trivial covering. Observe that by the covering theorem the embedded surface $\tau(S_0)\subset N$ faces a degenerate end of $N$.

Choosing a base point $*\in S_0$, we set $$\Gamma =\pi_1 (M,*)\ \text { and } \  H = \pi_1 (\tau (S_0),\tau (*)).  $$  The desired manifold $M' $ will be the cover of $ N $ corresponding to the subgroup $$\Gamma' = \left < \tau_*(\Gamma), H \right >\subset \pi_1(N,\tau(*)).$$  By construction, $ \pi_1(M') \cong \Gamma'$ is finitely generated and there are covering maps $\tau':M\to M'$ and $\tau'':M'\to N$ with $\tau=\tau''\circ\tau'$, so it remains only to prove that $|\chi(M')|<|\chi(M)| $.  


For our purposes, the most useful way to interpret the Euler characteristic will be through its relation to the dimension of the deformation spaces $\calcium (M) $ and $\calcium (M') $ of convex-cocompact hyperbolic structures on $M$ and $M' $ (see Section \ref{conformalboundaries}).  Observe that since $M$ and $M' $ have finitely generated fundamental group and no cusps, they are homeomorphic by the tameness theorem to the interiors of compact hyperbolizable $3$-manifolds $\bar M $ and $\bar M' $ whose boundary components have negative Euler characteristic. It follows from Section \ref{conformalboundaries} that $\calcium (M) $ and $\calcium (M') $ are complex manifolds of $\BC $-dimensions $ 3|\chi (M) |$ and $3|\chi (M)| $, respectively, and that there is a holomorphic map $$ (\tau') ^* : \calcium (M') \to \calcium (M) $$ defined by lifting hyperbolic structures using $\tau' : M \to M' $.  We will prove:

\begin{claim} $ (\tau') ^* $ has discrete fibers and is not open.
\end{claim}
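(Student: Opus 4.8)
The plan is to establish the two asserted properties of $(\tau')^*$ separately, and then to invoke the standard complex-analytic fact that a holomorphic map between complex manifolds which has discrete fibers but fails to be open must \emph{strictly} drop dimension. Combined with the computation $\dim_\BC\calcium(M)=3|\chi(M)|$ and $\dim_\BC\calcium(M')=3|\chi(M')|$ furnished by Theorem \ref{AhlforsBers}, this yields exactly $|\chi(M')|<|\chi(M)|$, which is what remains to prove the Proposition.

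\emph{Discrete fibers.} Here I would appeal to Corollary \ref{extension-finite}, applied to the inclusions $\tau'_*(\Gamma)\subset\Gamma'$ together with the torsion-free surface group $H=\pi_1(\tau(S_0))$. By construction $\tau'_*(\Gamma)$ and $H$ generate $\Gamma'$, which is hypothesis (1); and since $\tau\vert_{S_0}\colon S_0\to\tau(S_0)$ is a \emph{finite} covering, $\tau'_*(\pi_1(S_0))$ has finite index in $H$, so hypothesis (2) holds because $\tau'_*(\pi_1(S_0))\subset\tau'_*(\Gamma)\cap H$. Now a point of $\calcium(M)$ is the conjugacy class of a discrete, faithful, non-elementary representation $\rho\colon\Gamma\to\PSL_2\BC$, and a point in the fiber of $(\tau')^*$ over it is a class $[\rho']\in\calcium(M')$ whose restriction $\rho'\vert_{\tau'_*(\Gamma)}$ is conjugate to $\rho$. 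After normalizing so that $\rho'\vert_{\tau'_*(\Gamma)}=\rho$ exactly, which is possible since the non-elementary group $\rho(\Gamma)$ has trivial centralizer in $\PSL_2\BC$, Corollary \ref{extension-finite} shows there are only finitely many admissible $\rho'$. Hence every fiber is finite, in particular discrete.

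\emph{Not open.} This is the geometric heart of the argument, and the step I expect to be hardest. The essential input is that $\tau\vert_{S_0}$, and hence $\tau'\vert_{S_0}$, is a covering of some degree $d\ge 2$. Via the Ahlfors--Bers parameterization (Theorem \ref{AhlforsBers}) I would identify $(\tau')^*$ with the induced map $\CT(\partial\bar M')\to\CT(\partial\bar M)$ of Teichm\"uller spaces of the conformal boundaries. Lifting a convex-cocompact structure along $\tau'$ acts on the conformal boundary by pulling conformal structures back along the boundary coverings; so if $S_0'\subset\partial\bar M'$ denotes the boundary component onto which the component $S_0\subset\partial\bar M$ maps, then the projection of the image of $(\tau')^*$ to the factor $\CT(S_0)$ is contained in the image $\mathcal P$ of the pullback embedding $\CT(S_0')\hookrightarrow\CT(S_0)$. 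Since $S_0\to S_0'$ has degree $d\ge 2$ we get $|\chi(S_0')|=|\chi(S_0)|/d<|\chi(S_0)|$, whence $\dim_\BC\mathcal P=\tfrac32|\chi(S_0')|<\tfrac32|\chi(S_0)|=\dim_\BC\CT(S_0)$, so $\mathcal P$ is a proper analytic subset. Pulling $\mathcal P$ back under the coordinate projection $\CT(\partial\bar M)\to\CT(S_0)$ exhibits the image of $(\tau')^*$ inside a proper analytic subset of $\calcium(M)$, which has empty interior; an open map cannot have image in such a set, so $(\tau')^*$ is not open.

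The main obstacle is making the boundary description of $(\tau')^*$ precise: one must verify that the end of $M$ faced by $S_0$ really covers, with degree exactly $d$, a \emph{single} end of $M'$, so that the corresponding factor of the boundary map is genuinely the degree-$d$ pullback $\CT(S_0')\hookrightarrow\CT(S_0)$. This I would obtain by combining the Covering Theorem (Theorem \ref{thecoveringtheorem}) with the factorization $\tau=\tau''\circ\tau'$: because $H=\pi_1(\tau(S_0))\subset\Gamma'$, the surface $\tau(S_0)$ lifts homeomorphically to $M'$, forcing $\tau''$ to be a homeomorphism near this end and thereby localizing the entire nontrivial covering $S_0\to\tau(S_0)$ within $\tau'$.
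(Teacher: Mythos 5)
Your overall route is the paper's own: discrete fibers via Corollary \ref{extension-finite} applied to $\tau'_*(\Gamma)$ and $H=\pi_1(\tau(S_0))$ (your verification of hypotheses (1) and (2) matches the paper's three bullet points), and non-openness by reading $(\tau')^*$ through the Ahlfors--Bers parameterization (Theorem \ref{AhlforsBers}) as a map $\CT(\partial\bar M')\to\CT(\partial\bar M)$ whose coordinate in the factor $\CT(S)$ factors through the positive-codimension pullback embedding $\bar\tau^*:\CT(S')\to\CT(S)$, so that the image has empty interior and the map cannot be open. Your treatment of what you call the main obstacle is in fact \emph{more} detailed than the paper, which simply asserts that the Covering Theorem (Theorem \ref{thecoveringtheorem}) makes $\tau'$ extend to a \emph{nontrivial} cover $\bar\tau:S\to S'$; your observation that $H\subset\Gamma'$ forces $\tau(S_0)$ to lift homeomorphically to $M'$, so that $\tau''$ has degree one on the relevant end and the entire degree-$d$ covering is carried by $\tau'$, is exactly the justification that assertion needs.

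The one genuine gap is in the discrete-fibers half: from ``only finitely many admissible $\rho'$'' you conclude ``every fiber is finite,'' which conflates points of $\calcium(M')$ with conjugacy classes of holonomy representations. A point of $\calcium(M')$ is a \emph{marked} structure --- a metric modulo isometries \emph{isotopic} to the identity --- and two distinct points can have conjugate holonomy: the isometry identifying them may only be homotopic, not isotopic, to the identity, a real possibility here since $\partial \bar M'$ may well be compressible. So Corollary \ref{extension-finite} only bounds the number of holonomy conjugacy classes occurring in a fiber of $(\tau')^*$; to finish you still need the fact, which the paper cites to \cite[p.~154]{Japaner}, that the set of elements of $\calcium(M')$ with holonomy in a fixed conjugacy class is a discrete subset of $\calcium(M')$. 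With that input each fiber is a finite union of discrete sets, hence discrete --- which is all the claim (and the subsequent dimension argument) requires; finiteness of the fibers is neither established by your argument nor needed.
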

Since any holomorphic map with discrete fibers is open unless the dimension of the domain is smaller than the dimension of the image, we deduce from the claim that
$$3 |\chi(M')|=\dim_{\BC }\calcium (M') < \dim_{\BC } \calcium (M) =3|\chi(M)|.  $$
Therefore, $|\chi (M')| <| \chi (M) |$ and hence Proposition \ref{attaching-roots} will follow once we prove the claim.
\medskip

The first part of the claim is almost immediate.  If $\tau'_*: \Gamma \to\Gamma' $ is the inclusion induced by the covering $\tau' : M \to M' $ and $H < \Gamma' $ is as above, then by construction

\begin{itemize}
\item $H $ is torsion-free,
\item $\Gamma'$ is generated by $\tau'_*(\Gamma)$ and $H $,
\item $\tau'_* (\Gamma)\cap H $ has finite index in $H $.
\end{itemize} 

It follows from Corollary \ref{extension-finite} that a faithful representation $ \Gamma \to \PSL_2\BC $ has only finitely many extensions to $\Gamma' $.  Therefore, hyperbolic structures on $M' $ that map under $ (\tau')^*$ to the same element of $\calcium (M) $ have only finitely many options for holonomy representations, up to conjugacy.  However, the elements of $\calcium (M') $ with holonomy in any fixed conjugacy class form a discrete subset of $\calcium (M') $ (see page 154, \cite{Japaner}), so $ (\tau')^*$ must have discrete fibers.

To show that it is not open, we use the Ahlfors-Bers parameterization to produce from $(\tau')^*$ a holomorphic map 
\begin{displaymath}
\xymatrix{
\beta : \CT (\partial \bar M') \cong \calcium (M') \ar[r]^{  \ (\tau')^*}  & \calcium (M) \cong \CT (\partial \bar M).  }
\end{displaymath}
The Teichmuller spaces of $\partial \bar M' $ and $\partial \bar M $ split as products of the Teichmuller spaces of their connected components; let $S \subset \partial \bar M $ be the component adjacent to the degenerate end that the surface $S_0 $ faces.  The covering theorem implies that $\tau' $ extends to a nontrivial cover $\bar \tau: S \to S' $ onto some connected component $S' \subset \partial \bar M' $.  With respect to the decompositions 
$$\CT(\D\bar M)=\CT(S)\times\CT(\D\bar M\setminus S),\ \ \CT(\D\bar M')=\CT(S')\times\CT(\D\bar M'\setminus S')$$
the map $\beta $ can be written as 
$$\beta (\sigma_1,\sigma_2)=(\bar\tau^*\sigma_1,\hat \beta (\sigma_1,\sigma_2))$$
where $\bar\tau^*:\CT(S')\to\CT(S)$ is the map induced by the covering $\bar\tau: S\to S' $.  This covering is non-trivial, so the image of $\bar\tau^*$ has positive codimension and hence the same holds for the image of $\beta $.  Therefore $\beta $ is not open, implying the same for $(\tau')^*$. \qed
\medskip

\begin{bem}
A purely homological computation yields that under the same assumptions as in Proposition \ref{attaching-roots} we have $b_1(M')<b_1(M)$ where $b_1(\cdot)$ is the first Betti number with $\BR$-coefficients. However, this homological argument does not seem to work in the relative case that we will discuss in section \ref{sec:cusps}. This is why we choose to work with Euler characteristics and deformation spaces instead.
\end{bem}

\section{Proof of Theorem \ref{main}}\label{sec:main}
Recall the statement of Theorem \ref{main}:

\begin{named}{Theorem \ref{main}}
Let $\Gamma$ be a finitely generated group and $(\rho_i)$ a sequence in $\calD(\Gamma)$.  Assume that $ (\rho_i) $ is algebraically convergent and converges geometrically to a subgroup $G$ of $\PSL_2\BC$. If $G$ does not contain parabolic elements, then $G$ is finitely generated.
\end{named}

If the hyperbolic 3-manifold $\BH^3/G$ is compact then $G$ is obviously finitely generated. Assume from now on that this is not the case; since $G$ does not contain parabolic elements, this assumption implies that $\BH^3/G$ has infinite volume.

Among all finitely generated subgroups of $G$ which contain $\rho(\Gamma)$, choose $H$ such that the Euler characteristic of the associated hyperbolic 3-manifold $\BH^3/H$ has minimal absolute value. Since $H\subset G$ we have a covering $\BH^3/H\to\BH^3/G$. By minimality, we obtain from Proposition \ref{attaching-roots} that every degenerate end of $\BH^3/H$ embeds under this cover. 

On the other hand, the assumption that $H$ contains $\rho(\Gamma)$ implies, by Proposition \ref{competitors}, that there is a sequence of representations $\sigma_i:H\to\PSL_2\BC$ converging algebraically to the inclusion of $H$ in $G$ such that the groups $\sigma_i(H)$ converge geometrically to $G$. Theorem \ref{max-cyclic-ugly} implies now that $H=G$. In particular, $G$ is finitely generated. This concludes the proof of Theorem \ref{main}.\qed

\section{Parabolics}\label{sec:cusps}
It is a well established fact that most theorems in the deformation theory of Kleinian groups that hold in the absence of parabolics hold also, in some probably weaker form, in the presence of parabolics. It is also well known that proofs in the case with parabolics are much more cumbersome and technical but follow the same arguments as the proofs in the purely hyperbolic case. For the sake of clarity and transparency of exposition, we decided to prove Theorem \ref{main} only in the absence of parabolics. We state now the general results and discuss what changes have to be made in the arguments given above.

Throughout this section we assume that the reader is familiar with basic geometric facts about hyperbolic manifolds with cusps, \cite{Japaner}.
\medskip

As mentioned in the introduction, Evans \cite{Evans} obtained the following extension of Theorem \ref{AC}. 

\begin{sat}[Evans \cite{Evans}]\label{Evans}
Assume $\Gamma $ is a finitely generated group and that $(\rho_i)$ is a sequence of faithful representations in $\calD (\Gamma) $ converging algebraically to some representation $\rho$.  If the convergence $\rho_i\to\rho$ is weakly type preserving, then $\rho_i $ converges geometrically to $\rho(\Gamma)$.
\end{sat}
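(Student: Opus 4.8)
The plan is to recover Theorem \ref{Evans} from the faithful case of Anderson's Theorem \ref{anderson} together with a relative, cusped version of Theorem \ref{max-cyclic-ugly}, paralleling exactly the way Theorem \ref{AC} was deduced from Theorems \ref{anderson} and \ref{max-cyclic} in the purely loxodromic setting. The first point to record is that Anderson's Theorem \ref{anderson} holds verbatim here: the proof given above never used the absence of parabolics. Indeed, Lemma \ref{roots} bounds the number of $k$-th roots of a parabolic just as well as of a loxodromic element, and the one analytic input---that an element of $\rho_i(\Gamma)$ converging to the identity must eventually equal the identity---uses only discreteness of the geometric limit $G$ together with the Margulis lemma. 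Since the $\rho_i$ are faithful, we conclude that maximal cyclic subgroups of $\rho(\Gamma)$ are maximal cyclic in $G$.

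With this in hand, set $M_A=\BH^3/\rho(\Gamma)$, $M_G=\BH^3/G$ and $M_i=\BH^3/\rho_i(\Gamma)$, and let $\pi:M_A\to M_G$ be the covering induced by $\rho(\Gamma)\subset G$; by Proposition \ref{convergence-convergence-geom} the $M_i$ converge geometrically to $M_G$. The heart of the matter is to prove the cusped analogue of Theorem \ref{max-cyclic-ugly}: under the weakly type preserving hypothesis, and knowing that maximal cyclic subgroups of $\rho(\Gamma)$ are maximal cyclic in $G$, the covering $\pi$ is a homeomorphism, which is the assertion $G=\rho(\Gamma)$. The weakly type preserving condition is precisely what controls the thin parts: since every $\gamma\in\Gamma$ with $\rho(\gamma)$ parabolic is parabolic in all $\rho_i$, the cusps of $M_A$ persist along the whole sequence, and under geometric convergence the standard cusp neighborhoods of $M_A$ map to cusp neighborhoods of $M_G$ rather than being swallowed by Margulis tubes around short geodesics. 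I would therefore first fix uniform Margulis neighborhoods, excise them, and work with a \emph{relative} compact core $C\subset M_A$ whose boundary splits into three kinds of pieces: surfaces facing convex cocompact ends, surfaces facing degenerate ends, and the torus and annulus boundaries of the cusp regions.

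The argument then runs along the lines of the proof of Theorem \ref{max-cyclic}. Using geometric convergence one forms the almost isometric embeddings $\phi_i$ and the maps $f_i=\phi_i\circ\pi|_C:C\to M_i$, which are $\pi_1$-surjective for large $i$. On the convex cocompact pieces one radially extends $f_i$ using the strict convexity guaranteed by the relative version of Proposition \ref{exhaustion}, invoking Lemma \ref{Lemmabilipschitz} and Corollary \ref{local-bilipschitz} to keep the extension locally bilipschitz. On the degenerate pieces the Covering Theorem \ref{thecoveringtheorem} shows that $\pi$ restricts to a finite cover, and the maximal cyclic hypothesis forces this cover to be an embedding---otherwise a loop in the image would fail to lift while one of its powers does. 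The genuinely new work is on the cusp pieces: here the weakly type preserving assumption lets one extend $f_i$ across the cusp regions of $M_A$ into the corresponding cusp regions of $M_i$, matching rank-one and rank-two cusps correctly. Gluing these extensions produces a map $\bar f_i:M_A'\to M_i$ that is a $\pi_1$-surjective covering, hence a homeomorphism, from which $G=\rho(\Gamma)$ follows. Evans' theorem is then the special case in which faithfulness supplies, via Anderson, the maximal cyclic hypothesis for free.

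The step I expect to be the main obstacle is the treatment of the cusp regions, and more precisely ruling out \emph{accidental} parabolics in $G$. In the absence of cusps every boundary component of the relative core was either strictly convex or degenerate, and the two extension mechanisms covered all cases; with cusps one must additionally guarantee that no parabolic of $G$ arises outside the image of the cusp subgroups of $\rho(\Gamma)$, since such an element would obstruct $\pi$ from being injective near the thin part. Verifying this requires careful, uniform geometric estimates near the Margulis thin parts---that the $\phi_i$ carry cusp neighborhoods of $\pi(C)$ isometrically close to cusp neighborhoods of $M_i$, and that weakly type preservation prevents a sequence of short geodesics in $M_i$ from collapsing to a new cusp of $M_G$ unaccounted for by $\rho(\Gamma)$. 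Once this local picture near the cusps is pinned down, the global topological conclusion follows exactly as in the loxodromic case.
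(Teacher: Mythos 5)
Your proposal follows essentially the same route as the paper: Evans' theorem is recovered by combining Anderson's Theorem \ref{anderson} (whose proof, as you observe, nowhere uses the absence of parabolics) with the cusped strengthening of Theorem \ref{max-cyclic-ugly}, namely Theorem \ref{max-cyclic-cusp2}, whose proof extends the maps $f_i$ across the cuspidal part of $M_A$ exactly as you sketch---the paper does this via a perturbation lemma of Brock--Bromberg followed by radial extension along geodesic rays orthogonal to $\partial M_A^{\cusp<\mu}$. The one remark worth making is that the ``accidental parabolics'' in $G$ you flag as the main obstacle require no separate uniform estimates: the weakly type preserving hypothesis is needed only to push the cusps of $M_A$ into cusps of $M_i$, after which the extended $\pi_1$-surjective coverings force $\pi$ to be injective on an exhaustion of $M_A$, so $G=\rho(\Gamma)$ and the absence of new parabolics follows a posteriori.
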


An algebraically convergent sequence $\rho_i\to\rho$ is {\em weakly type preserving} if for every $\gamma\in\Gamma$ such that $\rho(\gamma)$ is parabolic, there is some $i_\gamma \in \BN$ such that $\rho_i(\gamma)$ is parabolic for all $i\ge i_\gamma$. 

Theorem \ref{max-cyclic} can be extended to this setting as follows:

\begin{sat}\label{max-cyclic-cusp}
Let $\Gamma$ be a finitely generated group, and $(\rho_i)$ a sequence in $\calD (\Gamma) $ converging algebraically to a representation $\rho$ and geometrically to a subgroup $G$ of $\PSL_2\BC$. If 
\begin{itemize}
\item the convergence $\rho_i\to\rho$ is weakly type preserving, and
\item maximal cyclic subgroups of $\rho(\Gamma)$ are maximally cyclic in $G$,
\end{itemize}
then $G=\rho(\Gamma)$.
\end{sat}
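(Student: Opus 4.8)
The plan is to run the proof of Theorem~\ref{max-cyclic} essentially verbatim on the non-cuspidal parts of the three manifolds, using the weakly type preserving hypothesis to guarantee that the cusps of the algebraic limit are matched by genuine cusps of the approximates and hence are respected by the maps coming from geometric convergence. As before, set $M_A=\BH^3/\rho(\Gamma)$, $M_G=\BH^3/G$ and $M_i=\BH^3/\rho_i(\Gamma)$, with compatible base frames chosen in the thick parts, so that $(M_i,\omega_i)\to(M_G,\omega_G)$ geometrically and we obtain a covering $\pi:M_A\to M_G$ induced by $\rho(\Gamma)\subset G$. The goal remains to show that $\pi$ is a homeomorphism, since this forces $G=\rho(\Gamma)$.

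First I would fix disjoint standard (Margulis) horoball neighborhoods of the cusps of $M_G$, pull them back through $\pi$ to $M_A$, and push them forward, via the almost isometric embeddings, into $M_i$. The weakly type preserving hypothesis is exactly what is needed here: if $\rho(\gamma)$ is parabolic then $\rho_i(\gamma)$ is parabolic for all large $i$, so each cusp of $M_A$ (equivalently of $M_G$) persists as a cusp of $M_i$, and the embeddings $\phi_i$ can be arranged to send cusp neighborhoods to cusp neighborhoods. Denote by $M_A^{0}$, $M_G^{0}$, $M_i^{0}$ the complements of these open cusp neighborhoods; the differential-geometric part of the argument takes place there.

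The remaining steps parallel Section~\ref{sec:max-cyclic}. Using the relative (pared) version of Proposition~\ref{exhaustion} I would choose a relative compact core $C\subset M_A$ whose boundary splits into strictly convex smooth surfaces facing convex-cocompact ends, surfaces facing degenerate ends on which $\pi$ restricts to a finite covering onto an embedded surface (the covering theorem, Theorem~\ref{thecoveringtheorem}, is valid in the presence of cusps), and the paring annuli and tori cut out by the cusp neighborhoods. Setting $f_i=\phi_i\circ(\pi|_C)$, which is again $\pi_1$-surjective, I would extend $f_i$ to a covering map $\bar f_i:M_A'\to M_i$ of a modified manifold $M_A'$ built from $C$ exactly as before: radially into the convex-cocompact ends, where Corollary~\ref{local-bilipschitz} applies unchanged since it is purely local and away from cusps; by gluing in the outer pieces $P_i^S$ along the degenerate-facing surfaces, where the maximal cyclic hypothesis again forces $\pi|_S$, hence $f_i|_S$, to be an embedding with separating image; and across the cusp neighborhoods by the natural identification with the corresponding cusps of $M_i$, which exists because the generating parabolics $\rho_i(\gamma)$ converge to $\rho(\gamma)$. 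As $\bar f_i$ is then a $\pi_1$-surjective covering map between complete hyperbolic manifolds, it is a homeomorphism, so $M_A=M_A'$ and $\pi$ is a homeomorphism.

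I expect the cusps to be the main obstacle, and two points will need more care than in the cuspless case. The first is the reduction to $M_G$ of infinite volume: the proof of Theorem~\ref{max-cyclic} ruled out finite volume using the absence of parabolics in $\rho(\Gamma)$, and with parabolics now allowed one must instead combine tameness, Canary's theorem and the covering theorem to control the ends of the cover $M_A\to M_G$ in the finite-volume case. The second, and more delicate, is the behaviour of the maps near the \emph{corners} where the paring annuli and tori meet the convex boundary surfaces: one must verify that the horoball neighborhoods of $M_A$, $M_G$ and $M_i$ can be matched consistently by the almost isometric embeddings, and that the radial extension, the gluing of the $P_i^S$, and the identification across the cusps all fit together into a single continuous, locally bilipschitz map $\bar f_i$. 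Making this cusp bookkeeping precise is where the bulk of the technical work lies, and is precisely the cumbersome-but-routine cusp analysis alluded to at the start of this section.
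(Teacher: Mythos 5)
Your proposal is correct and follows essentially the same route as the paper: there too one takes a core $C$ obtained from a large neighborhood of the convex core by deleting standard neighborhoods of the degenerate NP-ends, builds the maps $f_i$ only on $C\setminus M_A^{\cusp<\mu}$ (since the almost isometric embeddings are defined on compact sets), and uses the weakly type preserving hypothesis to see that $f_i$ carries $C\cap\partial M_A^{\cusp<\mu}$ into a small neighborhood of $\partial M_i^{\cusp<\mu}$, after which the cuspless argument runs verbatim. The ``natural identification across the cusps'' that you leave vague is implemented in the paper exactly where you predicted the technical work would lie: a perturbation in the style of Brock--Bromberg makes $f_i$ map $\partial M_A^{\cusp<\mu}$ into $\partial M_i^{\cusp<\mu}$ with orthogonal derivative, and one then extends isometrically along the geodesic rays orthogonal to the cuspidal boundary, checking in the upper half-space model that the result is locally $(1+\epsilon)$-bilipschitz.
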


As before, we will actually prove a stronger statement.

\begin{sat}\label{max-cyclic-cusp2}
Let $\Gamma$ be a finitely generated group, and $(\rho_i)$ a sequence in $\calD (\Gamma) $ converging algebraically to a representation $\rho$ and geometrically to a subgroup $G$ of $\PSL_2\BC$. If 
\begin{itemize}
\item the convergence $\rho_i\to\rho$ is weakly type preserving, and
\item every degenerate NP-end of $\BH^ 3 / \rho (\Gamma) $ has a neighborhood that embeds under the covering $\BH^ 3 / \rho (\Gamma)  \to \BH^ 3 / G $.
\end{itemize}
then $G=\rho(\Gamma)$.
\end{sat}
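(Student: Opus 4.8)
The plan is to run the argument that proved Theorem~\ref{max-cyclic-ugly}, but on the \emph{non-cuspidal parts} of the manifolds involved. Set $M_A=\BH^3/\rho(\Gamma)$, $M_G=\BH^3/G$, and $M_i=\BH^3/\rho_i(\Gamma)$ with base frames chosen as before, so that by Proposition~\ref{convergence-convergence-geom} the pointed manifolds $(M_i,\omega_i)$ converge geometrically to $(M_G,\omega_G)$. As in the cuspless case we may assume $M_G$ is noncompact (otherwise Mostow rigidity forces eventual stability) and, arguing exactly as in the proof of Theorem~\ref{max-cyclic}, that $M_G$ has infinite volume. There is a covering $\pi:M_A\to M_G$ induced by $\rho(\Gamma)\subset G$, and the goal is once more to show it is a homeomorphism. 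I would fix Margulis-thin horoball neighborhoods of the cusps and write $M_A^0,M_G^0,M_i^0$ for the resulting non-cuspidal parts, whose ends are precisely the NP-ends.

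First I would set up a cusp-respecting form of geometric convergence. The Margulis lemma guarantees that, after shrinking the horoball neighborhoods, the almost isometric embeddings supplied by geometric convergence carry cusp neighborhoods of $M_G$ into cusp neighborhoods of $M_i$; here the weak type preserving hypothesis is what ensures that the cusps of $M_A$ encountered along $\pi$ actually persist as cusps of $M_i$ for large $i$, rather than opening up into short geodesics. Next I would establish the relative analogue of Proposition~\ref{exhaustion}: working with a relative compact core $C\subset M_A^0$ (in the sense of McCullough) and smoothing the boundary of a neighborhood of the convex core, one obtains a $C$ whose frontier components facing geometrically finite NP-ends are smooth and strictly convex away from the cusps, while those facing degenerate NP-ends cover embedded surfaces in $M_G^0$ under $\pi$ by the Covering Theorem. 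One also imposes the separation condition $(3)$ relative to a fixed core $C_0$, exactly as before.

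With this in place the argument proceeds as in Section~\ref{sec:max-cyclic}. For large $i$ the cusp-respecting almost isometric embedding $\phi_i$ of $\pi(C)$ into $M_i$ yields a $\pi_1$-surjective map $f_i=\phi_i\circ(\pi|_C)$. On components of $\partial C$ facing geometrically finite NP-ends one radially extends $f_i$ and invokes Corollary~\ref{local-bilipschitz} to produce a locally bilipschitz extension; on a component $S$ facing a degenerate NP-end, the hypothesis that $S$ embeds under $\pi$ shows that $f_i|_S$ is an embedding with separating image, and one glues in the component of $M_i\setminus f_i(S)$ not containing $f_i(C_0)$; along the horospherical boundary $f_i$ already restricts to a covering of cusp regions, so it extends there too. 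The result is a globally defined $\pi_1$-surjective covering $\bar f_i$ of $M_i$, hence a homeomorphism, forcing $M_A=M_G$ and therefore $G=\rho(\Gamma)$.

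The main obstacle is the careful treatment of the cusps, concentrated in the relative exhaustion and in the global extension of $f_i$. One must choose the horoball truncations uniformly enough that they are respected simultaneously by $\pi$ and by all the $\phi_i$, which requires combining the weak type preserving hypothesis with the Margulis lemma; and one must check that the strict convexity underlying Corollary~\ref{local-bilipschitz} survives near the interface between geometrically finite ends and the rank-one cusp annuli, where the convex-core boundary is only asymptotically convex. A further point needing care is that weak type preservation controls only the parabolics of the \emph{limit}, so new parabolics in the $\rho_i$ must be shown not to interfere with $\bar f_i$; this is handled by verifying that they fall outside the image of $f_i$, precisely as the separation condition $(3)$ handles the degenerate ends.
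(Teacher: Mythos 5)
Your proposal is correct and follows essentially the same route as the paper: construct a (noncompact, relative) core of $M_A=\BH^3/\rho(\Gamma)$ whose frontier is strictly convex at geometrically finite NP-ends and embeds under $\pi$ at degenerate NP-ends, transport it into $M_i$ via $\pi$ and the almost isometric embeddings, and extend to a $\pi_1$-surjective locally bilipschitz covering $\bar f_i$, which must be a homeomorphism. The one step you assert rather than carry out --- extending $f_i$ across the horospherical boundary --- is exactly where the paper works: it first perturbs $f_i$ (following Brock--Bromberg) so that $f_i(C\cap\partial M_A^{\cusp<\mu})\subset\partial M_i^{\cusp<\mu}$ with orthogonality of derivatives (weak type preservation guaranteeing, as you say, that the image loops are parabolic of nearly equal length rather than short geodesics), and then extends by sending the geodesic rays orthogonal to $\partial M_A^{\cusp<\mu}$ that foliate $C\cap M_A^{\cusp<\mu}$ isometrically to orthogonal rays in $M_i^{\cusp<\mu}$, an extension that remains locally $(1+\epsilon)$-bilipschitz by a computation in the upper half space model.
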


Recall that an NP-end of a hyperbolic $3$-manifold $M $ is an end of the manifold obtained  from $M $ by removing neighborhoods of all of its cusps.  The full statement of Canary's Covering Theorem \cite {Canary-covering} shows that if a covering $M \to N $ of infinite volume hyperbolic $3$-manifolds is not one-to-one on some degenerate NP-end, then some maximal cyclic subgroup of $\pi_1 M $ is not maximal cyclic in $\pi_1 N $.   One can see \cite {Japaner} for more details on the covering theorem and NP-ends.

\begin{proof} Let $M_i=\BH^3/\rho_i(\Gamma)$, $M_A=\BH^3/\rho(\Gamma)$, $M_G=\BH^3/G $ and let $\pi: M_A \to M_G $ be the covering induced by the inclusion $\rho (\Gamma) \subset G $.  We first recall the basic idea of the proof in the case without cusps.  First, there are arbitrarily large compact cores $C$ of $M_A$ such that the components of $\D C$ facing convex cocompact ends of $M_A$ are strictly convex. Using our second hypothesis above, we can also arrange that the components of these cores facing degenerate ends embed under the covering $\pi : M_A \to M_G$. Composing the restriction $\pi |_C $ with the almost isometric embeddings $\pi (C) \to M_i $ supplied by geometric convergence, we obtain maps $ f _i :C \to M_i$ such that 
\begin{enumerate}
\item if a component $S$ of $\D C$ faces a degenerate end of $M_A$ then $ f_i\vert_S$ is an embedding for all large $i$,
\item if $S \subset \partial C$ faces a convex cocompact end then $ f_i\vert_S$ is a convex immersion.
\end{enumerate}
We then construct for large $i$ a 3-manifold $N_i$ containing $C$ and a covering $\bar f_i:N_i\to M_i$ with $\bar f_i\vert_C= f _i $. This covering is $\pi_1$-surjective and hence a diffeomorphism, so in particular $C$ embeds under the covering $\pi:M_A\to M_G$. Since $C$ can be chosen to be arbitrarily large, this proves that the covering $\pi$ is trivial and hence $G=\rho(\Gamma)$.

If there are parabolics in the algebraic limit, the ends of $M_A $ are more complicated and refinements of the tools above are needed. The natural replacement of the compact core $C$ is a submanifold $C\subset M_A$ with the following properties:
\begin{enumerate}
\item if a component $S$ of $\D C$ faces a degenerate NP-end of $M_A$ then $S$ embeds under the covering $M_A\to M_G$,
\item if $S$ faces a geometrically finite NP-end then $S$ is strictly convex,
\item the complement in $C$ of the $\mu$-cuspidal part $M_A^{\cusp<\mu}$ of $M_A$ is a standard compact core, where $\mu$ is the Margulis constant.
\end{enumerate}

The construction of such a submanifold is the same as that used to produce the compact cores above: one takes a large metric neighborhood of the convex core $CC (M_A) $ and deletes standard neighborhoods of the degenerate NP-ends of $M_A $.  In this case, however, the resulting manifold $C $ will contain parts of the cusps of $M_A $, and will therefore be noncompact.  This is a problem, since $\pi (C) \subset M_G$ will also be noncompact and the almost isometric embeddings from $M_G $ to $M_i $ provided by geometric convergence are only defined on compact sets.  However, we still can use the almost isometric embeddings to produce locally bilipschitz maps $ f_i: C \setminus M_A^ {\cusp <\mu } \to M_i, $ and we will show that if the convergence $\rho_i \to\rho $ is weakly type preserving then these can be extended to locally bilipschitz maps $$\hat f_i:C\to M_i $$
converging uniformly on compact sets to the restriction $\pi |_ C$. The proof of Theorem \ref{max-cyclic-cusp} is then word-by-word the same as the proof of Theorem \ref{max-cyclic} with the maps $\hat f_i$ playing the role of $ f_i$. 

It remains to construct $\hat f_i$.  Consider the maps $$ f_i: C \setminus M_A^ {\cusp <\mu}\to M_i $$ described above, and let $\epsilon $ be a small positive constant.  If $i $ is large then $f_i $ is locally $( 1+\epsilon) $-bilipschitz; combined with the fact that the convergence $\rho_i \to \rho $ is weakly type preserving and $M_A $ has finitely many cusps, this implies that $f_i $ sends loops homotopic into the cusps of $M_A $ to parabolic loops in $M_i $ with nearly the same length.  It follows that $C\cap \partial M_A^{\cusp<\mu}$ is mapped under $ f_i $ into a small neighborhood of $\partial M_i^{\cusp<\mu}$.  After a small perturbation, we can then arrange that

\begin{enumerate}
\item $f_i (C \cap \partial M_A^ {\cusp <\mu }) \subset \partial M_i^ {\cusp <\mu } $,
\item $D f_i $ sends vectors orthogonal to $\partial M_A^ {\cusp < \mu } $ to vectors orthogonal to $\partial M_i^ {\cusp <\mu } $.
 \end{enumerate} 
Brock and Bromberg \cite[Lemma 6.16]{Brock-Bromberg} show how to accomplish such a perturbation with a bit of finesse; in particular, their argument shows that we can still assume that $f_i $ is locally bilipschitz.

Recall that $C $ was constructed by removing standard neighborhoods of the degenerate NP-ends of $M_A $ from its convex core.  These neighborhoods can be chosen so that their intersections with $M_A^ {\cusp <\mu } $ are foliated by geodesic rays orthogonal to $\partial M_A^ {\cusp <\mu } $; the intersection $C \cap M_A^ {\cusp <\mu} $ then enjoys the same property.  Combining this with (1) and (2) above allows us to extend $f_i $ to a locally bilipschitz map $\hat f_i : C \to M_i $ as follows: define $\hat f_i $ to coincide with $f_i $ on $C\setminus M_A^{\cusp<\mu}$ and map geodesic rays in $C \cap M_A^ {\cusp <\mu } $ orthogonal to $ \partial M_A^ {\cusp <\mu } $ isometrically to geodesic rays orthogonal to $\partial M_i^{\cusp<\mu}$.  A quick computation in the upper half space model for $\Hyp^3 $ verifies that the extension $\hat f_i $ is also locally $(1+\epsilon) $-bilipschitz.  It follows that $\hat f_i \to \pi |_C $ uniformly on compact subsets.d
\end {proof}

Observe that as in the case without parabolics, we can replace the second hypothesis of Theorem \ref{max-cyclic-cusp} with the condition that every degenerate NP-end of $M_A $ has a neighborhood that embeds in $M_G $.
\medskip

A version of Proposition \ref{attaching-roots} that applies to manifolds with cusps is also readily established:

\begin{prop}\label{attaching-roots-cusp}
Let $M$ and $N$ be hyperbolic 3-manifolds with infinite volume and let $\tau:M\to N$ be a covering.  Assume that $\pi_1(M)$ is finitely generated and that $M$ has a degenerate NP-end that does not embed under the covering $\tau$. Then there is a hyperbolic 3-manifold $M'$ with finitely generated fundamental group, with 
$$3|\chi(M')|-\#\{\hbox{cusps in}\ M'\}<3|\chi(M)| -\#\{\hbox{cusps in}\ M\}$$ 
and coverings $\tau':M\to M'$ and $\tau'':M'\to N$ with $\tau=\tau''\circ\tau'$.
\end{prop}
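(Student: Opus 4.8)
The plan is to follow the template of Proposition \ref{attaching-roots} exactly, replacing the convex-cocompact deformation space $\calcium(\cdot)$ with its relative (geometrically finite) analogue and tracking how the presence of cusps modifies the dimension count. As before, Proposition \ref{exhaustion} (in its version for manifolds with cusps) supplies a standard compact core $C$ of $M$ for which a degenerate NP-end faces a component $S_0$ of $\partial C$ on which $\tau|_{S_0}:S_0\to\tau(S_0)$ is a nontrivial covering onto an embedded surface facing a degenerate NP-end of $N$. Setting $\Gamma=\pi_1(M,*)$ and $H=\pi_1(\tau(S_0),\tau(*))$, I would again define $M'$ to be the cover of $N$ corresponding to $\Gamma'=\langle\tau_*(\Gamma),H\rangle$, so that $\tau=\tau''\circ\tau'$ with $\tau':M\to M'$ and $\tau'':M'\to N$, and $\pi_1(M')\cong\Gamma'$ is finitely generated.

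The key point is the dimension formula for the geometrically finite deformation space. For a hyperbolizable manifold $M$ with cusps, the space of geometrically finite hyperbolic structures (with fixed parabolic locus) is parameterized by the Teichm\"uller space of the conformal boundary, which now omits the thrice-punctured-sphere pieces and the rank-two cusp tori; a standard computation gives its complex dimension as $3|\chi(M)|-\#\{\text{cusps in }M\}$. This is precisely the quantity appearing in the statement, which is why one is forced to work with deformation spaces rather than Euler characteristic alone. The two halves of the proof of Proposition \ref{attaching-roots} then go through: the inclusion-induced lifting map $(\tau')^*$ between these relative deformation spaces is holomorphic; it has discrete fibers by Corollary \ref{extension-finite}, since $H$ is torsion-free, $\Gamma'$ is generated by $\tau'_*(\Gamma)$ and $H$, and $\tau'_*(\Gamma)\cap H$ has finite index in $H$; and it fails to be open because, via the Ahlfors-Bers parameterization, $(\tau')^*$ restricts on the Teichm\"uller factor of the boundary component $S$ adjacent to the relevant degenerate end to the map $\bar\tau^*:\CT(S')\to\CT(S)$ induced by the nontrivial cover $\bar\tau:S\to S'$, whose image has positive codimension. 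A holomorphic map with discrete fibers that is not open must decrease dimension, yielding the strict inequality.

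The main obstacle, and the reason the remark in Section \ref{sec:attaching-roots} warns that the homological argument does not survive, is pinning down the correct relative deformation space and verifying that its dimension is exactly $3|\chi(M)|-\#\{\text{cusps}\}$. One must work with the geometrically finite deformation space $AH_{gf}(M,P)$ relative to the parabolic locus $P$, whose boundary-at-infinity consists of the non-cusp pieces of $\partial_c M$; the thrice-punctured spheres arising from rank-one cusps contribute rigidly (Teichm\"uller dimension zero) and the rank-two cusps remove torus boundary components entirely, so each cusp effectively subtracts one complex parameter. Making this bookkeeping precise, and checking that the covering theorem still provides the nontrivial boundary cover $\bar\tau:S\to S'$ at the level of the conformal boundary when cusps are present (so that $\tau'$ extends to the boundary and restricts to a nontrivial cover of a non-cusp component), is the delicate step; once it is in place, the discrete-fiber and non-openness arguments are identical to the cusp-free case. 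I would therefore cite the standard geometrically-finite analogue of Theorem \ref{AhlforsBers}, for instance from \cite{Japaner}, for the dimension count, and otherwise transcribe the earlier proof verbatim with $\calcium(\cdot)$ replaced by the relative deformation space.
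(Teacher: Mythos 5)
Your proposal is correct and follows essentially the same route as the paper: the paper's proof of Proposition \ref{attaching-roots-cusp} simply reruns the proof of Proposition \ref{attaching-roots} verbatim, replacing $\calcium(M)$ by the space of geometrically finite metrics whose parabolic locus coincides with that of the original structure, and citing \cite[Theorem 8.44]{Kapovich} for the dimension count $-3\chi(M)-\#\{\hbox{cusps in}\ M\}\ge 0$, which is exactly the quantity you identify. The one step you flag as delicate --- verifying the dimension formula for the relative deformation space --- is precisely what the paper outsources to Kapovich's theorem rather than to the bookkeeping you sketch, so nothing is missing.
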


The proof is the same as that of Proposition \ref{attaching-roots}, except that instead of considering the deformation space $\calcium (M) $ of convex-cocompact hyperbolic structures on $M$ one uses geometrically finite metrics whose parabolic loci coincide with that of the original hyperbolic structure on $M$.  The space of such metrics, up to isometries isotopic to the identity map, is by \cite [Theorem 8.44] {Kapovich} a complex manifold of $\BC $-dimension $$-3 \chi(M)-\#\{\hbox{cusps in}\ M\}\ge 0.  $$
After these remarks the proof of Proposition \ref{attaching-roots-cusp} is the same as the proof of Proposition \ref{attaching-roots}.\qed
\medskip

Having provided versions of Theorem \ref{max-cyclic} and Proposition \ref{attaching-roots} that apply to representations with parabolics, we are almost ready to discuss the general form of Theorem \ref{main}. Before doing so, we need a definition:

\begin{defi*}
Assume that a sequence of subgroups $(G_i)$ of $\PSL_2\BC$ converges geometrically to a subgroup $G$. We say that the convergence $G_i\to G$ is {\em geometrically weakly type preserving} if for every $g\in G$ parabolic there is a sequence $(g_i)$ with $g_i\in G_i$ converging to $g$ and with $g_i$ parabolic for all but finitely many $i$.
\end{defi*}

In the terminology of \cite{BS}, a geometrically convergent sequence of subgroups converges in a geometrically weakly type preserving manner if and only if the associated sequence of pointed manifolds has {\em uniform length decay}. 

The general version of Theorem \ref{main} reads now:

\begin{sat}\label{main-cusp}
Let $\Gamma$ be a finitely generated group and $(\rho_i)$ a sequence in $\calD (\Gamma) $ that converges algebraically to a representation $\rho$ and geometrically to a subgroup $G$ of $\PSL_2\BC$. If the convergence $\rho_i\to G$ is geometrically weakly type-preserving, then $G$ is finitely generated.
\end{sat}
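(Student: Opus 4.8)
The plan is to run the proof of Theorem~\ref{main} verbatim, substituting at each step the cuspidal version of the tool used there: Proposition~\ref{attaching-roots-cusp} in place of Proposition~\ref{attaching-roots}, and Theorem~\ref{max-cyclic-cusp2} in place of Theorem~\ref{max-cyclic-ugly}. First I would dispose of the easy case: if $\BH^3/G$ has finite volume then $G$ is a lattice, hence finitely generated, so I may assume $\BH^3/G$ has infinite volume. Then every finitely generated subgroup $H\subset G$ containing $\rho(\Gamma)$ also has infinite volume, for otherwise $G$ would contain a finite-covolume subgroup and thus be a lattice. Consequently each such $\BH^3/H$ is the interior of a compact hyperbolizable manifold, and by the dimension count quoted after Proposition~\ref{attaching-roots-cusp} the quantity
$$c(H):=3|\chi(\BH^3/H)|-\#\{\text{cusps in }\BH^3/H\}$$
is a non-negative integer. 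Since $\rho(\Gamma)$ itself is such a subgroup, I would choose $H$ with $\rho(\Gamma)\subset H\subset G$ minimizing $c(H)$.

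Next I would extract the two hypotheses needed to invoke Theorem~\ref{max-cyclic-cusp2}. Write $\tau:\BH^3/H\to\BH^3/G$ for the covering induced by $H\subset G$. If some degenerate NP-end of $\BH^3/H$ did not embed under $\tau$, Proposition~\ref{attaching-roots-cusp} would yield an intermediate manifold $\BH^3/H'$ with $H\subset H'\subset G$, hence $\rho(\Gamma)\subset H'$, with $H'$ finitely generated and $c(H')<c(H)$, contradicting minimality; so every degenerate NP-end of $\BH^3/H$ embeds under $\tau$. Since $H$ is finitely generated and contains $\rho(\Gamma)$, Proposition~\ref{competitors} then furnishes for large $i$ representations $\sigma_i:H\to\PSL_2\BC$ with $\sigma_i(H)=\rho_i(\Gamma)$, converging algebraically to the inclusion $\iota:H\hookrightarrow G\hookrightarrow\PSL_2\BC$ and with $\sigma_i(H)\to G$ geometrically. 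If I can show that $\sigma_i\to\iota$ is weakly type preserving, then Theorem~\ref{max-cyclic-cusp2}, applied to $(\sigma_i)$ in $\calD(H)$, gives $G=\iota(H)=H$, and we are done.

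The main obstacle is precisely this last point: transferring the hypothesis that $\rho_i\to G$ is \emph{geometrically} weakly type preserving into the statement that $\sigma_i\to\iota$ is weakly type preserving; this is the only place where the cuspidal argument really differs from that of Theorem~\ref{main}. Let $h\in H$ with $\iota(h)$ parabolic, so that $h$ is a parabolic element of $G$. Geometric weak type preservation provides $\gamma_i\in\rho_i(\Gamma)=\sigma_i(H)$ with $\gamma_i\to h$ and $\gamma_i$ parabolic for all large $i$, while algebraic convergence gives $\sigma_i(h)\to h$ as well. Thus $\gamma_i$ and $\sigma_i(h)$ are two sequences in $\rho_i(\Gamma)$ converging to the same parabolic $h$, and I would finish with a standard Margulis-lemma argument: fixing a point $x\in\BH^3$ deep in the horoball centered at the fixed point $\xi$ of $h$, the isometry $h$ moves $x$ only slightly, so for large $i$ both $\gamma_i$ and $\sigma_i(h)$ move $x$ by less than the Margulis constant. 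Hence $\langle\gamma_i,\sigma_i(h)\rangle$ is discrete, torsion-free and elementary; as it contains the parabolic $\gamma_i$, every nontrivial element fixes $\xi$ and is itself parabolic, forcing $\sigma_i(h)$ (nontrivial since $\sigma_i(h)\to h\neq\Id$) to be parabolic. This verifies weak type preservation, and together with the NP-end embedding property obtained above, Theorem~\ref{max-cyclic-cusp2} yields $G=H$; in particular $G$ is finitely generated. \qed
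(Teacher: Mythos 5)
Your proof is correct and takes essentially the same route as the paper's: choose a finitely generated $H$ with $\rho(\Gamma)\subset H\subset G$ minimizing the Euler-characteristic-minus-cusps complexity, produce the representations $\sigma_i$ via Proposition \ref{competitors}, transfer geometric weak type preservation to algebraic weak type preservation of $\sigma_i\to\iota$, and conclude $G=H$ from Theorem \ref{max-cyclic-cusp2} together with Proposition \ref{attaching-roots-cusp}. The only differences are cosmetic and, if anything, improvements: you write out the Margulis-lemma argument for the type-preservation transfer, which the paper asserts in a single sentence, you explicitly dispose of the finite-volume case, and your minimized quantity $3|\chi(\BH^3/H)|-\#\{\hbox{cusps}\}$ matches the statement of Proposition \ref{attaching-roots-cusp}, whereas the paper's proof writes a coefficient $2$ on the cusp count (apparently a typo there).
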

\begin {proof}
As in the proof of Theorem \ref{main}, we choose a finitely generated subgroup $H\subset G$ containing $\rho(\Gamma)$ and minimizing the quantity $$3|\chi(\BH^3/H)| -2\#\{\hbox{cusps in}\ \BH^3/H\} \geq  0. $$ 

We claim that $G=H$. By Proposition \ref {competitors}, there are for large $i $ representations $\sigma_i:H\to\PSL_2\BC$  with $\sigma_i(H)=\rho_i(\Gamma_i)$ converging algebraically to the inclusion of $H$ in $G$. In particular, $G$ is the geometric limit of the groups $\sigma_i(H)$. The assumption that the convergence $\sigma_i(H)=\rho_i(\Gamma)\to G$ is geometrically weakly type-preserving implies that the algebraic convergence of $\sigma_i$ to the inclusion of $H$ into $G$ is (algebraically) weakly type preserving. In particular we deduce from Theorem \ref{max-cyclic-cusp2} that either $G=H$ or some degenerate NP-end of $\BH^3/H$ has no neighborhood that embeds under the cover $\BH^3/H\to\BH^3/G$.  The second possibility is ruled out by the choice of $H$ and Proposition \ref{attaching-roots-cusp}, so $G =H $ and therefore is finitely generated. 
\end{proof}

\appendix

\section{Some assorted results}

As mentioned in the introduction, we discuss here to which extent some other results concerning faithful representations remain true if the condition of faithfulness is relaxed. 

\begin{defi*}
A sequence $(\rho_i)$ of representations is {\em eventually faithful} if for all $\gamma\in\Gamma \setminus\{1\}$ there is some $i_\gamma$ such that $\gamma\notin\Ker(\rho_i)$ for all $i\ge i_\gamma$.
\end{defi*}

Many algebraically convergent sequences determine an eventally faithful sequence.  The following well-known lemma formalizes this; its proof is a simple application of 
 the J\o rgensen inequality.

\begin{lem} Let $(\rho_i) $ be a sequence of representations in  $\mathcal D (\Gamma) $ that converges algebraically to a representation $ \rho$. Then given any   $\gamma  \in \Gamma  $, we have that $\gamma \in \ker \rho \Longleftrightarrow \exists i_0\in  \BN  \text { such that  } \gamma \in \ker \rho_i \text { for all } i \geq  i_0.$ 
\label{kernelslemma}
\end{lem}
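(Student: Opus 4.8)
The plan is to prove the two implications separately, the reverse one being immediate and the forward one carrying all the content. If $\gamma\in\ker\rho_i$ for all $i\ge i_0$, then $\rho(\gamma)=\lim_i\rho_i(\gamma)=\Id$ by algebraic convergence, so $\gamma\in\ker\rho$. For the converse, suppose $\gamma\in\ker\rho$, so that $\rho_i(\gamma)\to\Id$, and assume toward a contradiction that $\rho_i(\gamma)\neq\Id$ for infinitely many $i$; after passing to a subsequence I may take $\rho_i(\gamma)\neq\Id$ for every $i$. By Proposition \ref{Jorgensen} the limit group $\rho(\Gamma)$ is discrete, torsion-free and non-elementary, so I can fix two elements $a,b\in\Gamma$ for which $\langle\rho(a),\rho(b)\rangle$ is non-elementary; concretely one takes $\rho(a),\rho(b)$ to be loxodromics with no common fixed point on $\partial\BH^3$.

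The key input is the J\o rgensen inequality: if $A,B\in\PSL_2\BC$ generate a discrete non-elementary group, then $|\tr^2 A-4|+|\tr(ABA^{-1}B^{-1})-2|\ge 1$. I apply this with $A=\rho_i(\gamma)$ and $B=\rho_i(\eta)$ for $\eta\in\{a,b\}$. Since $\rho_i(\gamma)\to\Id$ we have $\tr^2\rho_i(\gamma)\to 4$, and since $\rho_i(\eta)\to\rho(\eta)$ remains bounded the commutator $\rho_i(\gamma)\rho_i(\eta)\rho_i(\gamma)^{-1}\rho_i(\eta)^{-1}$ tends to $\Id$, so its trace tends to $2$. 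Hence for all large $i$ the left-hand side of the inequality is $<1$. As each $\rho_i(\Gamma)$, and therefore each two-generator subgroup $\langle\rho_i(\gamma),\rho_i(\eta)\rangle$, is discrete, J\o rgensen forces $\langle\rho_i(\gamma),\rho_i(\eta)\rangle$ to be elementary for $\eta=a$ and for $\eta=b$, once $i$ is large; since only the two fixed test elements $a,b$ are used, this requires only finitely many applications of the inequality.

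Next I extract the contradiction from the dynamics on $\partial\BH^3$. Because the groups are torsion-free, $\rho_i(\gamma)$ is loxodromic or parabolic and has a fixed-point set $F_i\subset\partial\BH^3$ of two points or of one point. Since $\langle\rho_i(\gamma),\rho_i(a)\rangle$ and $\langle\rho_i(\gamma),\rho_i(b)\rangle$ are elementary, the classification of elementary subgroups shows that both $\rho_i(a)$ and $\rho_i(b)$ preserve $F_i$: the limit set of an elementary group containing a loxodromic $g$ is exactly the two fixed points of $g$, while a parabolic generator forces a one-point limit set equal to its fixed point. As $F_i$ lies in the compact sphere $\partial\BH^3$, after a further subsequence $F_i\to F_\infty$, a set of at most two points. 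Passing to the limit in $\rho_i(a)(F_i)=F_i$ and using continuity of the action of $\PSL_2\BC$ on $\partial\BH^3$ yields $\rho(a)(F_\infty)=F_\infty$, and likewise $\rho(b)(F_\infty)=F_\infty$. Thus $\langle\rho(a),\rho(b)\rangle$ preserves a set of at most two points and is therefore elementary, contradicting the choice of $a,b$. Hence $\rho_i(\gamma)=\Id$ for all large $i$.

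I expect the main obstacle to be the bookkeeping in the last step: controlling the fixed-point sets $F_i$ as $\rho_i(\gamma)$ degenerates to the identity — where the two fixed points of a loxodromic may collide, so that $F_\infty$ can drop from two points to one — and verifying carefully in both the loxodromic and parabolic cases that "elementary" genuinely forces $\rho_i(a)$ and $\rho_i(b)$ to preserve $F_i$. By contrast, the J\o rgensen step is routine.
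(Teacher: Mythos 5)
Your proof is correct and takes essentially the same approach as the paper's: the trivial direction is dispatched the same way, and the forward implication is proved by fixing two elements whose $\rho$-images are loxodromic with disjoint fixed-point sets and applying the J\o rgensen inequality to the pairs $\{\rho_i(\gamma),\rho_i(\eta)\}$, which fails for large $i$ because $\rho_i(\gamma)\to\Id$, forcing these two-generator groups to be elementary. The only difference is cosmetic and lies in the endgame: the paper contradicts at each finite $i$ (a nontrivial, non-elliptic element of a torsion-free discrete group cannot lie in abelian groups with two loxodromics of distinct axes), while you pass the fixed-point sets $F_i$ to a Hausdorff limit and contradict the non-elementarity of $\langle\rho(a),\rho(b)\rangle$; both variants are sound.
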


Therefore, if $\ker \rho $ is finitely normally generated  then for large $i $ the representations  $\rho_i $ factor through $\Gamma/ \ker (\rho)  $; the resulting sequence of representations of $\Gamma/ \ker (\rho)  $ will be eventually faithful.

\begin{proof}
One direction of the implication is clear.  For the other direction, we assume that $\gamma \in  \ker  \rho $ and try to prove that $\gamma \in \ker \rho_i $ for large $i $.  

As $\mathcal D (\Gamma) $ is closed the representation $\rho $ is non-elementary, so there are two elements $\alpha_1,\alpha_2 \in \Gamma $ such that $ \rho (\alpha_1) $ and $ \rho (\alpha_2) $ are isometries of $\Hyp^3 $ of hyperbolic type and have distinct axis. Since $$\rho_i (\gamma)  \longrightarrow \rho (\gamma) = \Id, $$ we have for sufficiently large $i $ that each of the two pairs $\{ \rho_i (\gamma), \rho_i(\alpha_k)\} $ violates the J\o rgensen inequality (Theorem 2.17 in \cite{Japaner}). Therefore, both groups $\left< \rho_i (\gamma), \rho_i (\alpha_k) \right>$, $k =1,2 $, are abelian.  But since $ \rho_i (\alpha_1) $ and $ \rho_i (\alpha_2) $ have different axes the only way that both of these groups can be abelian is if $ \rho_i (\gamma) $ is elliptic or trivial. This is a contradiction, since it is nontrivial by assumption and cannot be elliptic since $ \rho_i $ is discrete and torsion-free.
\end{proof}

We assume from now on that $(\rho_i)$ is an eventually faithful sequence of representations in $\calD (\Gamma) $.  If $ S \subset \Gamma $ is a finite generating set, then each representation $\rho_i $ determines a convex function $$d_{\rho_i }:\BH^3\to\BR, \, \, d_{\rho_i } (x)=\sum_{\gamma\in S}d_{\BH^3}(x,\rho_i (\gamma)x).  $$ 
Conjugating our representations by $\PSL_2\BC $ if necessary, we may assume that some base point $0\in\BH^3$ is the unique minimum of each $d_{\rho_i } $.  In this case we say that the sequence $(\rho_i)$ consists of {\em normalized representations} and we set $d_{\rho_i}=d_{\rho_i}(0)$. It is well-known that the sequence $(\rho_i)$ contains an algebraically convergent subsequence if 
$$\liminf d_{\rho_i}<\infty.  $$
Otherwise, the sequence of actions of $\Gamma$ via $\rho_i$ on the scaled hyperbolic spaces $\frac 1{d_{\rho_i }}\BH^3$ contains a subsequence which converges in the equivariant Gromov-Hausdorff topology to a non-trivial action $\Gamma\actson T$ on some real tree $T$. Recall that an action on a tree is called trivial if it has global fixed points. Morgan-Shalen \cite{Morgan-Shalen}, Paulin \cite{Paulin} and Bestvina \cite{Bestvina} proved that if the representations $\rho_i $ are faithful the action $\Gamma\actson T$ is {\em small}, meaning that the stabilizers of non-degenerate segments in $T$ are virtually abelian. Their arguments still apply if the sequence is only eventually faithful, see \cite{Hossein-Souto}:

\begin{sat}\label{tree}
Every eventually faithful sequence of normalized representations in $\calD (\Gamma) $ has a subsequence that either converges algebraically in $\calD (\Gamma) $ or converges in the Gromov Hausdorff topology to a nontrivial small action $\Gamma\actson T$ on a $\BR$-tree $T$.\qed
\end{sat}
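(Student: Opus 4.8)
The plan is to follow the Bestvina--Paulin degeneration machine, splitting into two cases according to whether the minimal total displacements $d_{\rho_i}$ remain bounded. First I would dispose of the case $\liminf d_{\rho_i} < \infty$. Passing to a subsequence, assume $d_{\rho_i} \leq C$ for all $i$. Since $d_{\rho_i}(0) = \sum_{\gamma \in S} d_{\BH^3}(0, \rho_i(\gamma)\cdot 0)$ equals $d_{\rho_i} \leq C$, every generator $\gamma \in S$ satisfies $d_{\BH^3}(0, \rho_i(\gamma)\cdot 0) \leq C$. The set of elements of $\PSL_2\BC$ moving the base point $0$ a distance at most $C$ is compact, so after a further subsequence $\rho_i(\gamma)$ converges for each of the finitely many $\gamma \in S$. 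As $\Gamma$ is generated by $S$, this yields algebraic convergence $\rho_i \to \rho$ to some representation $\rho$, and Proposition \ref{Jorgensen} (closedness of $\calD(\Gamma)$) guarantees $\rho \in \calD(\Gamma)$. This produces the first alternative in the statement.

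Now assume $\liminf d_{\rho_i} = \infty$ and pass to a subsequence with $d_{\rho_i} \to \infty$. I would rescale the metric on $\BH^3$ by $1/d_{\rho_i}$ and regard $(\tfrac{1}{d_{\rho_i}}\BH^3, 0)$ as a based space carrying the isometric $\Gamma$-action given by $\rho_i$. By the normalization, the rescaled displacement function attains its minimum $1$ at the base point. The Bestvina--Paulin precompactness theorem, which uses only that the displacements of the fixed finite generating set are uniformly bounded, then produces a subsequence converging in the equivariant Gromov--Hausdorff topology to a based isometric action $\Gamma \actson T$. Because the rescaled spaces $\tfrac{1}{d_{\rho_i}}\BH^3$ are $\delta_i$-hyperbolic with $\delta_i = \delta/d_{\rho_i} \to 0$, the limit $T$ is $0$-hyperbolic, i.e. an $\BR$-tree. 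Moreover the rescaled displacement functions converge to the displacement function of the $\Gamma$-action on $T$, whose minimum is $\lim 1 = 1 > 0$; hence $T$ has no global fixed point and the action is nontrivial.

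It remains to verify that the limit action $\Gamma \actson T$ is small, and this is the step I expect to be the main obstacle. Following Morgan--Shalen, Paulin and Bestvina, suppose $\gamma_1, \gamma_2 \in \Gamma$ fix a common nondegenerate segment $I \subset T$. Unwinding the equivariant Gromov--Hausdorff convergence, for large $i$ the isometries $\rho_i(\gamma_1), \rho_i(\gamma_2)$ each move the endpoints of a geodesic segment in $\BH^3$ approximating $I$ by an amount small compared with its length; a standard hyperbolic-geometry argument then shows that the two isometries nearly share a long common axis, and their resulting near-commutation together with discreteness forces $\langle \rho_i(\gamma_1), \rho_i(\gamma_2)\rangle$ to be elementary, hence virtually abelian with index controlled by the Margulis constant. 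The same argument applies to any finite subset of the stabilizer of $I$, and the uniform Margulis bounds promote this to virtual abelianness of the full stabilizer.

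The only place faithfulness enters the classical argument is in guaranteeing that the elements in play act nontrivially through $\rho_i$, so that the elementary image faithfully records the structure of the subgroup of $\Gamma$. Here eventual faithfulness is enough: at each stage only finitely many elements of $\Gamma$ are relevant—the stabilizing elements together with a finite list of their products and commutators—and for $i$ exceeding the finitely many associated indices $i_\gamma$ none of these lies in $\ker \rho_i$. Thus $\rho_i$ is injective on the elements involved for all large $i$, so $\langle \gamma_1, \gamma_2\rangle$ is isomorphic to its virtually abelian image, and the faithful-case estimates apply verbatim. Smallness of $\Gamma \actson T$ then follows exactly as in \cite{Hossein-Souto}, completing the second alternative.
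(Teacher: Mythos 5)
Your route is the one the paper intends: the paper gives no written-out proof of this theorem (it is stated with a \qed after a paragraph asserting that the Morgan--Shalen, Paulin and Bestvina arguments survive the weakening from faithful to eventually faithful, with details deferred to \cite{Hossein-Souto}), and your dichotomy --- bounded $d_{\rho_i}$ giving algebraic convergence with the limit in $\calD(\Gamma)$ by Proposition \ref{Jorgensen}, unbounded $d_{\rho_i}$ giving a rescaled Bestvina--Paulin limit action on an $\BR$-tree, nontrivial because the normalized displacement functions are bounded below by $1$ at every point of the limit --- is exactly that standard argument, correctly assembled.

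One local claim is overstated, and it sits precisely at the delicate point. Eventual faithfulness does \emph{not} imply that $\rho_i$ is injective on the subgroup $\langle \gamma_1,\gamma_2\rangle$ for large $i$: it gives injectivity on any \emph{fixed finite subset} of $\Gamma$, while $\rho_i\vert_{\langle\gamma_1,\gamma_2\rangle}$ could have nontrivial kernel for every $i$, with different elements dying at different times. So the sentence ``$\langle\gamma_1,\gamma_2\rangle$ is isomorphic to its virtually abelian image'' is unjustified. Fortunately you do not need it, and the correct mechanism is the one you almost state in the preceding sentence: the group $\langle\rho_i(\gamma_1),\rho_i(\gamma_2)\rangle$ is discrete, torsion-free and elementary for large $i$, hence abelian, so $\rho_i([\gamma_1,\gamma_2])=1$ for all large $i$; if $[\gamma_1,\gamma_2]\neq 1$ this contradicts eventual faithfulness applied to the single element $[\gamma_1,\gamma_2]$. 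Thus pairs in a segment stabilizer commute, which gives smallness without ever knowing that $\rho_i$ embeds the subgroup. With that repair --- which transfers only the triviality of designated elements, never an isomorphism type, through $\rho_i$ --- your argument is complete and agrees with the adaptation the paper attributes to \cite{Hossein-Souto}.
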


It is a theorem of Morgan-Shalen \cite{Morgan-Shalen} that if the fundamental group of a compact, irreducible and atoroidal $3$-manifold $M$ admits a nontrivial small action on a real tree, then either $\D M$ is compressible or there are essential properly embedded annuli in $(M,\D M)$. Combining this fact with Theorem \ref{tree} we obtain the following result, essentially due to Thurston:

\begin{kor}\label{thurston}
Assume that $\Gamma$ is the fundamental group of a compact 3-manifold with incompressible and acylindrical boundary. Then every eventually faithful sequence of normalized representations in $\calD (\Gamma) $ contains a convergent subsequence.\qed
\end{kor}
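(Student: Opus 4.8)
The plan is to run the compactness-versus-trees dichotomy provided by Theorem \ref{tree} and then to eliminate the tree alternative using the topological constraints on the boundary of $M$. Concretely, I would first apply Theorem \ref{tree} to the given eventually faithful sequence of normalized representations. This yields a subsequence that either converges algebraically in $\calD(\Gamma)$, in which case the corollary holds immediately, or converges in the equivariant Gromov--Hausdorff topology to a nontrivial small action $\Gamma\actson T$ on an $\BR$-tree $T$. It therefore suffices to show that the second possibility cannot occur when $\Gamma=\pi_1(M)$ for a compact $3$-manifold $M$ with incompressible and acylindrical boundary.

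To rule out the tree alternative I would invoke the theorem of Morgan--Shalen quoted just above the corollary. A compact $3$-manifold with incompressible and acylindrical boundary is irreducible and atoroidal --- these being part of the standard meaning of acylindricity in this context --- so Morgan--Shalen's theorem applies: were $\pi_1(M)$ to admit a nontrivial small action on a real tree, then either $\partial M$ would be compressible or $(M,\partial M)$ would contain an essential properly embedded annulus. But $\partial M$ is incompressible by hypothesis, and acylindricity of $\partial M$ means precisely that $(M,\partial M)$ admits no essential properly embedded annuli. Thus both alternatives in the conclusion of Morgan--Shalen are excluded, and we conclude that $\Gamma=\pi_1(M)$ admits no nontrivial small action on an $\BR$-tree.

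Consequently the tree alternative in Theorem \ref{tree} is impossible, so the subsequence extracted there converges algebraically, the limit lying in $\calD(\Gamma)$ because $\calD(\Gamma)$ is closed under algebraic convergence (Proposition \ref{Jorgensen}). This proves the corollary. There is no deep analytic obstacle here: all of the genuine content has been absorbed into Theorem \ref{tree}, which packages the Morgan--Shalen--Paulin--Bestvina limiting-tree machinery in the eventually faithful setting. The only point requiring care is the bookkeeping at the end of the second paragraph, namely correctly pairing the two possible outputs of Morgan--Shalen's theorem with the two boundary hypotheses --- incompressibility excluding the compressible-boundary conclusion and acylindricity excluding the essential-annulus conclusion --- together with confirming that $M$ meets the irreducibility and atoroidality hypotheses under which that theorem is stated.
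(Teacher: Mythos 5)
Your proof is correct and is exactly the paper's argument: the corollary is stated there with no separate proof precisely because it is the immediate combination of Theorem \ref{tree} with the Morgan--Shalen theorem quoted just before it, with incompressibility and acylindricity of $\partial M$ ruling out the two alternatives in Morgan--Shalen's conclusion. Your extra care in checking irreducibility, atoroidality, and closedness of $\calD(\Gamma)$ via Proposition \ref{Jorgensen} is sound bookkeeping that the paper leaves implicit.
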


Many other results, for instance Thurston's double limit theorem \cite{Otal96} or the results in \cite{KS02}, ensuring the existence of convergent subsequences of sequences of faithful representations can be reduced to the non-existence of certain actions of groups on trees; variants of all these results still hold for eventually faithful sequences. It may be therefore surprising that some convergence results for sequences of faithful representations completely fail in our more general setting.

In \cite{Thu86A}, Thurston proved a generalization of Corollary \ref{thurston} to the case that $\Gamma$ is the fundamental group of a compact 3-manifold $M$ with incompressible boundary. More precisely, the so-called {\em only-windows-break theorem} asserts that whenever $(\rho_i)$ is a sequence of faithful representations of $\Gamma=\pi_1(M)$ and $N\subset M$ is a component of the complement of the characteristic manifold, then the sequence $(\rho_i\vert_{\pi_1(N)})$ has, up to conjugacy, a convergent subsequence.  Leaving the interested reader to consult \cite{Jaco} for information about the characteristic manifold, we limit ourselves to the following concrete example.

Let $H$ be a handlebody of genus $2$ and $\gamma\subset\D H$ a simple closed curve on the boundary of $H$ such that $\D H\setminus\gamma$ is incompressible and acylindrical; for instance, $\gamma$ can be taken in the Masur domain of $H$ \cite{KS02}. We consider the manifold $ N $ obtained by doubling $H$ along $\CN(\gamma)$, where $\CN(\gamma)$ is a regular neighborhood of $\gamma$ in $\D H$.   The choice of $\gamma $ ensures that $N$ has incompressible boundary and that there is a unique  (up to isotopy) properly embedded essential annulus $A\subset N$, the annulus along which we have glued. The annulus $A$ cuts $N$ open into two copies $H_1$ and $H_2$ of $H$. 

In this example, Thurston's only-windows-break theorem asserts:

\begin{sat}[Thurston]\label{owb}
Let $N$ be as above and $ (\rho_i) $ a sequence of discrete and faithful representations of $\pi_1(N) $ into $\PSL_2\BC $.  Then the sequence of restrictions $$\rho_i |_{\pi_1 (H_1) } : \pi_1 (H_1 )\to \PSL_2\BC $$ \vspace{2mm} has a subsequence that converges up to conjugacy in $\PSL_2\BC $. \end{sat}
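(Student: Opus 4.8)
The plan is to argue by contradiction, using the machinery of group actions on $\BR$-trees together with the acylindricity built into the construction of $N$. Write the splitting of $N$ along the annulus $A$ as an amalgamated product
$$\pi_1(N,\ast)=\pi_1(H_1)\ast_{\langle\gamma\rangle}\pi_1(H_2),$$
where $\langle\gamma\rangle=\pi_1(A)$ and $\gamma$ is the core of $A$, and recall that by the choice of $\gamma$ the pared manifold $(H_1,\gamma)$ has incompressible and acylindrical boundary. Since the $\rho_i$ are faithful, each restriction $\rho_i|_{\pi_1(H_1)}$ is faithful and lies in $\calD(\pi_1(H_1))$, and $\pi_1(H_1)\cong F_2$ is finitely generated; so after conjugating we may assume these restrictions are normalized with respect to a fixed free generating set $S_1$. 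Suppose, for contradiction, that $\rho_i|_{\pi_1(H_1)}$ has no subsequence converging up to conjugacy. Then the minimal displacements are unbounded, so by Theorem \ref{tree} a subsequence of the rescaled actions converges to a nontrivial small action $\pi_1(H_1)\actson T_1$ on an $\BR$-tree.

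The heart of the argument is to show that the amalgamating element $\gamma$ acts elliptically on $T_1$, i.e.\ has translation length zero. This is precisely the statement that the degeneration is confined to the window $\CN(A)$ and does not propagate into the non-window piece $H_1$ in a way that stretches $\gamma$. To see it, I would also normalize the full sequence $\rho_i$; if it subconverged then so would the restriction, against our assumption, so after passing to a subsequence Theorem \ref{tree} produces a nontrivial small action $\pi_1(N)\actson T$. Since $N$ is compact, orientable, irreducible and atoroidal with incompressible boundary whose only essential properly embedded annulus is $A$, the Morgan--Shalen analysis of small actions of Haken $3$--manifold groups identifies this degeneration with the characteristic splitting: the action is dual to $A$, so $\gamma$ stabilizes an edge of $T$ while each $\pi_1(H_i)$ fixes a vertex. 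Comparing the two rescalings—at the scale of the full sequence $\gamma$ has length zero while $\pi_1(H_1)$ fixes a point—one deduces that $\ell_{\rho_i}(\gamma)$ is negligible compared with the degeneration scale of $\pi_1(H_1)$, and hence that $\gamma$ is elliptic on $T_1$.

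Once $\gamma$ acts elliptically on $T_1$, the proof concludes by invoking the relative form of the Morgan--Shalen theorem for the pared manifold $(H_1,\gamma)$: a nontrivial small action of $\pi_1(H_1)$ in which the peripheral class $\gamma$ fixes a point forces either a compression of $\partial H\setminus\gamma$ or an essential annulus in $(H_1,\gamma)$ not parallel to the window. Both are excluded by the hypothesis that $\partial H\setminus\gamma$ is incompressible and acylindrical (the Masur domain condition), exactly as in Corollary \ref{thurston}. Thus $T_1$ must be trivial, contradicting its nontriviality, so $\rho_i|_{\pi_1(H_1)}$ subconverges after all.

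I expect the main obstacle to be the middle step—proving that $\gamma$ is elliptic on $T_1$, equivalently that the length of $\gamma$ grows strictly more slowly than the overall degeneration of $\pi_1(H_1)$. This is where the special structure of $N$ (incompressible boundary, a single essential annulus serving as the only window) is used essentially, and it is the precise point at which \emph{only windows break}: the degeneration is a Dehn twisting along $A$ that fixes $\pi_1(H_1)$ up to conjugacy, so a genuine tree degeneration of the restriction should be impossible. Making the comparison of the two rescalings rigorous, rather than arguing heuristically through the dual-lamination picture, is the technical crux.
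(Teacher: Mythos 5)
First, a point of orientation: the paper does not actually prove Theorem \ref{owb} --- it is quoted from Thurston \cite{Thu86A}, and the appendix's own content is the \emph{counterexample} showing the statement fails for eventually faithful sequences. So your argument must stand on its own, and as written it has a genuine gap at precisely the step you yourself flag as the crux; flagging it does not close it.

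The gap is twofold. First, the Morgan--Shalen result available here (the one the paper invokes for Corollary \ref{thurston}, from \cite{Morgan-Shalen}) says only that a nontrivial small action of $\pi_1(N)$ on an $\BR$-tree forces compressible boundary or an essential annulus; it does \emph{not} say the limit action is dual to $A$, nor that the vertex groups $\pi_1(H_i)$ fix points of $T$. That ellipticity assertion is essentially the only-windows-break theorem itself (or needs the far heavier lamination-duality machinery of the later Morgan--Shalen papers), so your middle step is close to circular. Second, even granting it, the two-scale comparison does not go through: ellipticity of $\gamma$ and of $\pi_1(H_1)$ in the full-scale tree $T$ gives $\ell_{\rho_i}(\gamma)=o(d_{\rho_i})$ and $d_{\rho_i|_{\pi_1(H_1)}}=o(d_{\rho_i})$, whereas what you need for $\gamma$ to be elliptic in $T_1$ is $\ell_{\rho_i}(\gamma)=o\bigl(d_{\rho_i|_{\pi_1(H_1)}}\bigr)$; two quantities that are each $o(d_{\rho_i})$ can have any ratio whatsoever, so "one deduces" is not a deduction. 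Note that without ellipticity of $\gamma$ the relative endgame has no purchase at all: $\pi_1(H_1)$ is free, hence admits plenty of nontrivial free (in particular small) actions on trees, so the entire content of the theorem is concentrated in the peripheral control of $\gamma$ --- exactly the step left open. Any repair must use faithfulness of $\rho_i$ on all of $\pi_1(N)$ in an essential way: the paper's sequence $\rho_n=\sigma_n\circ\tau_*\circ\delta_*^n$ is eventually faithful, its restriction to $\pi_1(H_1)$ is $\sigma_n$, and there $\gamma$ can perfectly well be hyperbolic in the restricted limit tree, so no argument of the shape you propose can succeed unless it exploits faithfulness beyond what Theorem \ref{tree} records. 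Thurston's actual proof proceeds through hyperbolic geometry (pleated surfaces and uniform injectivity) rather than a two-scale tree comparison.
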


We claim that there is an eventually faithful sequence of discrete representations of $\pi_1(N)$ for which the claim of Theorem \ref{owb} fails. 

By construction, the manifold $N$ is the double of $H$ along $\CN(\gamma)$. Let 
$$\tau:N\to H$$
be the map given by `folding' along $\CN(\gamma)$. Identifying $H$ with $H_1$, one of the two pieces of $N$, we choose a base point $p\in \CN  (\gamma)$ and hence we have a homomorphism
$$\tau_*:\pi_1(N,p)\to\pi_1(H,p)$$
We consider also the Dehn-twist 
$$\delta:N\to N$$
along the annulus $A$. 

\begin {claim}
The sequence 
$\tau_*\circ\delta_*^n:\pi_1(N,p)\to\pi_1(H,p)$ is eventually faithful.
\end{claim}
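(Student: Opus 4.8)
The plan is to turn the claim into a statement in combinatorial group theory by writing down the three maps on fundamental groups explicitly, and then reducing everything to a non-cancellation estimate in the free group $\pi_1(H)$.

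Since the annulus $A$ separates $N$ into the two copies $H_1,H_2$ of $H$ and has core $\gamma$, van Kampen's theorem gives $\pi_1(N,p)\cong\pi_1(H_1,p)\ast_{\langle\gamma\rangle}\pi_1(H_2,p)$, an amalgamated product of two copies of $G:=\pi_1(H,p)$ over $\langle\gamma\rangle$. Writing $g^{(1)},g^{(2)}$ for the images of $g\in G$ in the two factors, I would first record the effect of the maps. The folding map $\tau$ sends both factors identically onto $G$, so $\tau_*(g^{(j)})=g$. A standard computation shows that the Dehn twist $\delta$ along the separating annulus $A$ induces the automorphism of the amalgam that is the identity on $\pi_1(H_1)$ and conjugation by $\gamma$ on $\pi_1(H_2)$; hence $\delta_*^n$ fixes $\pi_1(H_1)$ and conjugates $\pi_1(H_2)$ by $\gamma^n$. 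Composing, the homomorphism $\phi_n:=\tau_*\circ\delta_*^n$ is determined by
$$\phi_n(g^{(1)})=g,\qquad \phi_n(g^{(2)})=\gamma^n g\gamma^{-n}.$$
Eventual faithfulness is exactly the assertion that for each fixed $w\in\pi_1(N,p)\setminus\{1\}$ one has $\phi_n(w)\neq 1$ for all large $n$. (Since inner automorphisms of $\pi_1(N)$ do not change the kernels of $\tau_*\circ\delta_*^n$, the basepoint ambiguity in $\delta_*$ is harmless.)

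Next I would put $w$ into reduced normal form in the amalgam. If $w\in\langle\gamma\rangle$ then $\phi_n(w)=w$ is a nontrivial power of $\gamma$ in the free group $G$ for every $n$, so there is nothing to prove. Otherwise $w$ has a reduced expression whose syllables alternate between the two factors and lie in $G\setminus\langle\gamma\rangle$, and applying $\phi_n$ yields a word in $G$ of the shape
$$\phi_n(w)=c_0\,\gamma^{\pm n}\,c_1\,\gamma^{\mp n}\,c_2\cdots\gamma^{\pm n}\,c_k,$$
where the signs alternate and each interior connecting word $c_i$ is a syllable of $w$, hence lies in $G\setminus\langle\gamma\rangle$. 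The whole problem is now to show that such a word, with the $c_i$ fixed and the exponents $\pm n$ growing, cannot reduce to the identity once $n$ is large.

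I expect this non-cancellation estimate to be the main obstacle, and the geometric input it requires is that $\langle\gamma\rangle$ is \emph{malnormal} in $G$. This is where the hypothesis on $\gamma$ enters: because $\partial H\setminus\gamma$ is incompressible and acylindrical, a relation $g\gamma^p g^{-1}=\gamma^q$ with $g\notin\langle\gamma\rangle$ would produce an essential, non-boundary-parallel cylinder in $(H,\gamma)$, which is excluded; hence $\langle\gamma\rangle$ is malnormal (and $\gamma$ is primitive). To exploit this I would work in the Cayley tree $T$ of the free group $G$, on which $\gamma$ acts as a hyperbolic isometry with axis $L$ and translation length $\ell\geq 1$. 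If $L$ and a translate $c_iL$ shared a half-line, then $\langle\gamma\rangle$ and $c_i\langle\gamma\rangle c_i^{-1}$ would meet nontrivially, contradicting malnormality; so for each of the finitely many $c_i\notin\langle\gamma\rangle$ the intersection $L\cap c_iL$ is a segment of diameter at most some constant $D$ depending only on $w$. Choosing $n$ with $n\ell>2D$ (plus a bounded correction for the end caps $c_0,c_k$), the images of the base vertex under the successive syllables of $\phi_n(w)$ trace a path in $T$ built from segments of length about $n\ell$ along translates of $L$, with backtracking at each junction bounded by $D$; since the backtracking is less than half a segment length, the path is a local geodesic with distinct endpoints. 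As $G$ acts freely on $T$, this gives $\phi_n(w)\,o\neq o$, hence $\phi_n(w)\neq 1$. Combining the two cases shows that every nontrivial $w$ eventually leaves the kernel of $\phi_n$, which is precisely eventual faithfulness; the only delicate point is making the tree estimate uniform over the finitely many syllables of $w$, the identification of $\phi_n$ and the malnormality of $\langle\gamma\rangle$ being routine.
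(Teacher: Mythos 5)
Your proposal is correct, and while it shares the paper's skeleton --- the splitting $\pi_1(N,p)\cong\pi_1(H_1,p)\star_{\langle\gamma\rangle}\pi_1(H_2,p)$, the computation that $\tau_*\circ\delta_*^n$ sends a reduced word to $a_1'(\gamma^n b_1'\gamma^{-n})a_2'(\gamma^n b_2'\gamma^{-n})\cdots$, and the reduction to showing that this word in the free group $\pi_1(H,p)$ survives for large $n$ --- the crucial non-cancellation step is carried out by a genuinely different mechanism. The paper fixes a free basis, takes $\gamma$ cyclically reduced, arranges by minimality of the syllable decomposition that the interior syllables are not powers of $\gamma$, and asserts that the cancellation in reducing the word is then bounded independently of $n$; you instead extract malnormality of $\langle\gamma\rangle$ in $\pi_1(H,p)$ from the acylindricity hypothesis on $\partial H\setminus\gamma$ and run the standard axes-overlap argument in the Cayley tree. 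Your route buys two things. First, it is more robust: ``not a power of $\gamma$'' does not by itself bound cancellation --- if $\gamma=\delta^k$ were a proper power and some syllable equalled $\delta$, then $\gamma^n\delta\gamma^{-n}=\delta$ collapses completely --- so the paper's combinatorial step tacitly uses that $\gamma$ is primitive, which is exactly the malnormality you make explicit and justify geometrically; in this respect your write-up is actually more careful than the paper's. Second, since your argument never mentions a basis, it applies verbatim to twists in doubles of arbitrary groups along malnormal cyclic subgroups, in the spirit of the paper's remark that such doubles are fully residually free, whereas the paper's version buys only brevity. The points you flag as delicate (a uniform overlap bound $D$ over the finitely many syllables $c_i$, the end caps $c_0,c_k$, and the criterion that a broken path in a tree with backtracking less than half of each segment is non-degenerate) are standard facts about free actions on trees and pose no obstruction.
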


It is worth observing that this claim is really a statement about a `twist' automorphism of any amalgamation of a free group with itself over some nontrivial cyclic subgroup.

\begin {proof}
The fundamental group $\pi_1  (N, p) $ is isomorphic to the amalgamation  $\pi_1 (H_1, p) \star_{ \left  < \gamma \right >} \pi_1 (H_2, p),  $ where with an abuse of notation we regard $\gamma $ as an element of both $\pi_1 (H_1, p) $ and $\pi_1 (H_2, p) $.  The folding map $\tau_*  : \pi_1 (N, p) \to \pi_1 (H, p)$ is the unique map from the amalgamation to $\pi_1 (H, p) $ that restricts to the canonical identifications $\pi_1 (H_i, p) \to \pi_1 (H, p) $.

Fix a free basis for $\pi_1 (H) $. There are then associated free bases for $\pi_1 (H_1, p) $ and $\pi_1 (H_2, p) $, and after a conjugation any element of $\pi_1 (N, p) $ can be written as
$$w = a_1  b_1 a_2 b_2\ldots a_k b_k,$$
where $a_1,\ldots, a_k $ and $b_1,\ldots, b_k $ are reduced words in the generators of $\pi_1 (H_1, p) $ and $\pi_1 (H_2, p)$, respectively, and all are nontrivial except possibly $a_1$.   Regarding $\gamma $ as a cyclically reduced word in the generators of $\pi_1 (H, p) $, we can ensure by choosing $k $ to be minimal that none of the words $a_2,\ldots, a_k ,b_1,\ldots, b_k $ correspond to elements of $\pi_1 (H, p) $  that  are powers of $\gamma $.  Then the $n $-fold twist of $w $ has the following expression:
$$\delta_*^  n (w) = a_1 (\gamma^  n b_1 \gamma^  {- n}) a_2 (\gamma^  n b_2  \gamma^  {- n} )\ldots a_n ( \gamma^  n b_n \gamma^  {- n}). $$

After applying the folding map $\tau_*$, this becomes
$$\tau_*  \circ\delta_*^  n (w) = a_1'  (\gamma^  n b_1'  \gamma^  {- n}) a_2'  (\gamma^  n b_2'   \gamma^  {- n} )\ldots a_n'  ( \gamma^  n b_n'  \gamma^  {- n}), $$ where $a_1' ,b_1' ,\ldots a_k' , b_k'  $ are the images of $a_1 ,b_1,\ldots a_k , b_k$ under the identifications $\pi_1(H_i, p) \to \pi_1 (H, p) $.  However, as  none of $b_1',a_2' ,b_2' ,\ldots a_k' , b_k'  $ are powers of the cyclically reduced word $\gamma $, there is only a bounded  amount of cancellation when reducing the word above.  Therefore, if $n $ is large then $\tau_*  \circ\delta_*^  n (w)$ is nontrivial.
\end{proof}

\begin{bem}
Essentially we are saying that $\pi_1(N, p)$ is fully residually free. This fact holds for every group obtained by doubling a free group along a cyclic subgroup.
\end{bem}

Since $H$ is a handlebody, $\pi_1(H,p)$ is a free group. Hence, we can choose some sequence of faithful representations $(\sigma_n)\in\calD(\pi_1(H, p))$ which cannot be conjugated to obtain convergent subsequences.  By construction, $\tau_*\circ\delta_*^n$ is the identity on $\pi_1(H,p)$ for all $n$. So setting $$\rho_n=\sigma_n\circ\tau_*\circ\delta_*^n : \pi_1 (N, p) \longrightarrow \PSL (2,\BC),$$ we obtain an eventually faithful sequence of representations which does not contain any subsequence whose restriction to $\pi_1(H, p)$ converges up to conjugacy.

\begin{sat}\label{no-owb}
The only-windows-break theorem fails for eventually faithful sequences of representations.\qed
\end{sat}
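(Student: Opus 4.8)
The plan is to present the eventually faithful sequence $(\rho_n)$ just constructed as an explicit counterexample to the conclusion of Theorem \ref{owb}. All of the geometric and combinatorial work is already in place; what remains is to identify the restriction of $\rho_n$ to $\pi_1(H_1, p)$ and then to invoke the failure of convergence of $(\sigma_n)$.

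First I would pin down $\rho_n|_{\pi_1(H_1, p)}$. Since $\rho_n = \sigma_n \circ \tau_* \circ \delta_*^n$, it suffices to understand $\tau_* \circ \delta_*^n$ on $\pi_1(H_1, p)$. From the explicit formula for $\delta_*^n$ derived in the preceding claim, the Dehn twist fixes every factor lying in $\pi_1(H_1, p)$ and only conjugates the $\pi_1(H_2, p)$-factors by $\gamma^n$; in particular $\delta_*^n$ restricts to the identity on $\pi_1(H_1, p)$. As $\tau_*$ restricts on $\pi_1(H_1, p)$ to the canonical identification $\pi_1(H_1, p) \to \pi_1(H, p)$, which is the identity under $H = H_1$, the composite $\tau_* \circ \delta_*^n$ is the identity on $\pi_1(H_1, p)$. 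Hence $\rho_n|_{\pi_1(H_1, p)} = \sigma_n$ for every $n$.

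I would then conclude by contradiction. The sequence $(\sigma_n)$ was selected among faithful representations in $\calD(\pi_1(H, p))$ so that no subsequence converges up to conjugacy in $\PSL_2\BC$; such sequences exist because $\pi_1(H, p)$ is free of rank $2$ and the corresponding deformation space is noncompact. By the previous paragraph, the restrictions $\rho_n|_{\pi_1(H_1, p)} = \sigma_n$ therefore admit no subsequence converging up to conjugacy. On the other hand, $(\rho_n)$ is eventually faithful by the claim just proved, and $H_1$ is one of the two components of the complement of the characteristic annulus $A \subset N$, so the conclusion of Theorem \ref{owb} — were it valid for eventually faithful sequences — would force a convergent subsequence of these restrictions. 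This contradiction shows that the only-windows-break theorem fails once faithfulness is relaxed to eventual faithfulness.

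The main obstacle is already behind us: it was the verification that $(\rho_n)$ is eventually faithful, which is precisely the content of the preceding claim and rests on the bounded-cancellation estimate for the twisted words $\tau_* \circ \delta_*^n(w)$ in the amalgamated free product $\pi_1(H_1, p) \star_{\langle \gamma \rangle} \pi_1(H_2, p)$. Granting that claim, the only genuinely new point is the bookkeeping identifying $\rho_n|_{\pi_1(H_1, p)}$ with $\sigma_n$, which is immediate from the way the Dehn twist and the folding map act on the two factors.
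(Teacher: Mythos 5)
Your proposal is correct and takes essentially the same route as the paper: the paper likewise observes that $\tau_*\circ\delta_*^n$ restricts to the identity on $\pi_1(H,p)$, so that $\rho_n\vert_{\pi_1(H_1,p)}=\sigma_n$, and then combines the eventual faithfulness claim with the choice of a divergent sequence $(\sigma_n)$ of faithful representations of the free group $\pi_1(H,p)$ to contradict the conclusion of Theorem \ref{owb}. Your explicit verification that $\delta_*^n$ fixes the $\pi_1(H_1,p)$-factor is exactly the bookkeeping the paper summarizes with ``by construction.''
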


An alternative statement of Theorem \ref{no-owb} could be that when one is not absolutely faithful more than the windows can get broken.

\bigskip

{\small 
\noindent Ian Biringer, Department of Mathematics, University of Chicago

\noindent \texttt{biringer@math.uchicago.edu}

\bigskip

\noindent Juan Souto, Department of Mathematics, University of Michigan, 

\noindent
\texttt{jsouto@umich.edu}}

\end{document}